\newtheorem{theorem}{Theorem}[section]
\newtheorem{lemma}[theorem]{Lemma}
\newtheorem{prop}[theorem]{Proposition}
\newtheorem{thm}[theorem]{Theorem}
\newtheorem{cor}[theorem]{Corollary}
\newtheorem*{cor*}{Corollary}
\newtheorem*{prop*}{Proposition}
\theoremstyle{definition}
\newtheorem{remark}[theorem]{Remark}
\newtheorem{definition}[theorem]{Definition}
\newcommand{\mc}{\mathcal}
\newcommand{\E}{\ensuremath{\mathds{E}}}
\newcommand{\R}{\ensuremath{\mathbb{R}}}
\newcommand{\va}{\ensuremath{\gamma}}
\newcommand{\vb}{\ensuremath{\nu}}
\newcommand{\vc}{\ensuremath{q}}
\newcommand{\vf}{\ensuremath{\phi}}
\def\pcut{{{}_\square}}
\title{The cut norm and Sampling Lemmas for unbounded kernels}
\author{Panna T\'imea Fekete}
\address{Institute of Mathematics, E\"otv\"os Lor\'and University\\POB 120, H-1518 Budapest, Hungary\\HUN-REN Alfr\'ed R\'enyi Institute of Mathematics\\POB 127, H-1364 Budapest, Hungary}
\email{feketept@renyi.hu}
\author{D\'avid Kunszenti-Kov\'acs}
\address{HUN-REN Alfr\'ed R\'enyi Institute of Mathematics\\POB 127, H-1364 Budapest, Hungary}
\email{daku@renyi.hu}
\begin{document}
\maketitle
\tableofcontents
\begin{abstract} Generalizing the bounded kernel results of Borgs, Chayes, Lovász, Sós and Vesztergombi (\cite{Borgs}), we prove two Sampling Lemmas for unbounded kernels with respect to the cut norm. On the one hand, we show that given 
a (symmetric) kernel $U\in L^p([0,1]^2)$ for some $3<p<\infty$, the cut norm of a random $k$-sample of $U$ is with high probability within $O(k^{-\frac14+\frac{1}{4p}})$ of the cut norm of $U$. The cut norm of the sample has a strong bias to being larger than the original, allowing us to actually obtain a stronger high probability bound of order $O(k^{-\frac 12+\frac1p+\varepsilon})$ for how much smaller it can be (for any $p>2$ here). These results are then partially extended to the case of vector valued kernels.

On the other hand, we show that with high probability, the $k$-samples are also close to $U$ in the cut metric, albeit with a weaker bound of order $O((\ln k)^{-\frac12+\frac1{2p}})$ (for any appropriate $p>2$). As a corollary, we obtain that whenever $U\in L^p$ with $p>4$, the $k$-samples converge almost surely to $U$ in the cut metric as $k\to\infty$.
\end{abstract}

%
%
\section{Introduction}
%
%
The cut norm for matrices was introduced by Frieze and Kannan in \cite{FK}, and used to formulate a regularity lemma that provides a weaker regularity notion than Szemerédi's, but with better bounds and the advantage of allowing for efficient algorithms. The main feature of the cut norm is that it emphasizes structure over randomness, and ``filters out noise'' via averaging in the following sense. For an $n\times n$ matrix with all entries 1, the cut norm equals 1, whereas for a matrix with iid $\pm 1$ entries with 0 expectation, the cut norm is with high probability of order $O(n^{-1/2})$. In other words, if two matrices differ by essentially a white noise, then their difference has very small cut norm, but as the difference becomes more ``structured'', its norm increases. This norm has played a key role in the development of (dense) graph limit theory due to its strong combinatorial ties.

In graph limit theory, graphs are up to isomorphism identified with their adjacency matrix, and it turns out that under the notion of homomorphism density convergence, the limiting objects can be represented by graphons, which are symmetric, measurable $[0,1]^2\to[0,1]$ functions. These functions can essentially be viewed as continuous analogues of adjacency matrices of graphs. Leaving the realm of simple graphs towards weighted graphs and multigraphs, the next natural classes of limit objects are represented by an unbounded, $L^p$ kernel for some $p<\infty$, see for instance the cut distance focused papers \cite{BCCZ1, BCCZ2} by Borgs, Chayes, Cohn and Zhao, and then vector valued graphs, see \cite{KKLSz1} by Lovász, Szegedy, and the second author.

The First Sampling Lemma as originally proved by Alon, de la Vega, Kannan and Karpinski in \cite{Alon} is concerned with estimating the distance of two matrices (or, more generally, $r$-dimensional arrays) in the cut norm by sampling. That result was later improved and extended to graphons and bounded kernels -- symmetric functions in $L^\infty([0,1]^2)$ -- by Borgs, Chayes, Lovász, Sós and Vesztergombi in \cite{Borgs}, proving an error bound for $k$-sampling of order $O(k^{-1/4})$ with high probability.

If one is interested in whether the sample is actually close to the original kernel, not just at roughly the same distance from the origin, the norm itself does not quite cut it, however. Namely, for simple graphs, the result of sampling does not depend on the labelling of the vertices, only on the isomorphism class of the graph; similarly, the distribution of the random samples of a kernel does not change if we apply a measure preserving transformation to the underlying unit interval $[0,1]$ to ``permute'' the kernel $U$ (cf. the effect of relabelling the vertices of a finite graph on its adjacency matrix). Hence we cannot expect norm concentration of the samples around $U$ unless it is invariant under such transformations, i.e., is constant. This is where the so-called \emph{cut metric} comes in, which essentially factorizes along these isomorphism classes. It was shown in the above mentioned paper \cite{Borgs} that for bounded kernels, the cut distance of a sample from the original kernel is with high probability of order $O((\ln k)^{-1/2})$. The significantly worse order of magnitude in this Second Sampling Lemma compared to the First Sampling Lemma is due to having to make use of the weak Szemerédi Regularity Lemma.
These sampling lemmas are a key ingredient in showing that the cut metric for matrices (or bounded kernels) is equivalent to the metric induced by left-convergence (also called density convergence) in the context of dense graph limit theory.

The second author showed in \cite{KKD} that for unbounded kernels, density convergence and convergence in cut distance are not equivalent. Indeed, it is known that for unbounded kernels $W$, the random $k$-samples left-converge with probability 1 to $W$ as $k\to \infty$ (Theorem 3.8  in \cite{KKLSz1} cited above), but this question was open in the cut distance. In particular, this indicates that one may not automatically expect the second sampling lemma to extend to unbounded kernels.

Under mild conditions, we derive a First Sampling Lemma for unbounded kernels. We provide bounds on the difference between the cut norm of an unbounded kernel and the cut norm of a sample with high probability. The upper and lower bounds turn out to be of very different orders of magnitude, with both depending on the $L^p$ class the kernel is in. The bounds get arbitrarily close in order to the bound for $L^\infty$ kernels as $p\to\infty$, at the cost of a larger, but still polynomially small (as opposed to exponentially small) exceptional event set. We note that such a polynomial sized exceptional set is still adequate for most applications.

We also prove a Second Sampling Lemma with similar conditions, and derive as a corollary that for any kernel in $L^p$ ($p>4$), the samples converge almost surely in the cut metric to the original kernel.

%
%
\section{Main results}\label{Section:Theorems}
%
%

Throughout the paper the sequence of random variables $X_0,X_1,\ldots$ will be assumed to consist of i.i.d. random variables with uniform distribution in the interval $[0,1]$, and $k$ will be an integer parameter. Initially, $U$ will denote a symmetric, real valued measurable function $[0,1]^2\to\R$, but in the last part of the paper we will be looking at vector valued kernels $U$. Let $X$ denote the random $k$-vector $X=(X_1,X_2,\ldots,X_k)$, and let $\mc{U}_X:=\mc{U}[X]$ denote the step-function on $[0,1]^2$ with uniform steps of length $1/k$ in each variable, and values given by $(U(X_i,X_j))_{i\neq j\in[k]}$, and 0 on the main diagonal (in order to avoid having to use values concentrated on a null-set).

Recall that given an $n\times n$ matrix $\mc{A}$ and a function $U\in L^1([0,1]^2)$, their \emph{cut norm} is defined as:
\[\|\mc{A}\|_\pcut = \frac{1}{n^2} \max_{S,T \subseteq[n]} \left|\sum_{i\in S,j\in T} \mc{A}_{ij}\right| \quad \text{ and } \quad \|U\|_\pcut = \sup_{S,T \subseteq[0,1]} \left| \int_{S\times T} U(x, y) dx dy\right|,\] respectively.

For two functions $U_1,U_2\in L^1([0,1]^2)$, their \emph{cut distance} is defined as follows.
 Let $\Psi$ denote the set of measure preserving bijections $[0,1]\to[0,1]$. Then
 \[ \delta_\pcut(U_1,U_2):=\inf_{\psi\in\Psi}\|U_1-U_2\circ\psi\|_\pcut,
 \]
where $(U_2\circ\psi)(x,y):=U_2(\psi(x),\psi(y))$.

Our goal is to provide a high probability bound on the difference of the norm
$
\big\|\mc{U}_X\big\|_\pcut$ of the random sample and the original norm $\big\|U\big\|_\pcut
$. The typical application would be to check with high probability that $\|U\|_\pcut$ is small, via looking at the samples $\mc{U}_X$. Indeed, in the graph limit theory context, the cut norm usually appears in various bounds (e.g., in Counting Lemmas or the weak version of Szemerédi's Regularity lemma by Frieze and Kannan) in the form $\|U_1-U_2\|_\pcut$, where it is sufficient to test for its smallness for applicability. On the other hand, we also want to obtain a concentration result for the samples around the kernel $U$, i.e., bound with high probability the cut distance $\delta_\pcut(\mc{U}_X, U)$.

Our main results are generalizations of the graphon sampling lemmas (\cite[Theorems 4.6 (i) and 4.7 (i)]{Borgs}) to the unbounded real valued case, and for the First Samping Lemma, partially to the vector valued case (the upper bound is valid only in finite dimensions). For the sake of readability, we here eliminate a parameter and formulate a simplified version of our real valued result, proved in full in Section \ref{section:proof_main}.

\begin{thm}{(First Sampling Lemma for Unbounded Kernels).}\label{Thm:First_sampling} 
Let $k\geq 2$ be an integer, $p>2$, and $U\in L_{sym}^p([0,1]^2)$. 
\begin{enumerate}
\item For any $\vf>0$,
the upper bound
\[
\|\mc{U}_X\|_\pcut-\|U\|_\pcut\leq \left(30\|U\|_p+
6\sqrt{2\vf p}
      (\|U\|_1+2\|U\|_p)
      \frac{\sqrt{\ln k}}{k^{\frac{p-3}{4p}-\vf}}\right)
      k^{-\frac14+\frac1{4p}}
\]
holds with probability at least
$1-4k^{-\vf p}$.
\item For any $\va\in(1/p,1/2)$, the lower bound
\[
\|\mc{U}_X\|_\pcut-\|U\|_\pcut\geq -\left(\|U\|_\pcut+6\sqrt{2(\va p-1)}(\|U\|_1+2\|U\|_p)\right) k^{-1/2+\va}\sqrt{\ln k}
\]
holds with probability at least $1- 4k^{1-\va p}$.
\end{enumerate}
\end{thm}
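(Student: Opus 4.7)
I adapt the bounded-kernel strategy of \cite{Borgs}, coupled with a truncation at a level $M$ to be optimized. For the \textbf{upper bound} in (1), decompose $U=U^{\le M}+U^{>M}$ where $U^{\le M}:=U\mathbbm{1}_{\{|U|\le M\}}$. By subadditivity and domination of the cut norm by the $L^1$ norm,
\[
\|\mc{U}_X\|_\pcut\le \|\mc{U}_X^{\le M}\|_\pcut+\|\mc{U}_X^{>M}\|_1.
\]
The $L^p$ hypothesis controls the tail in expectation, $\E\|\mc{U}_X^{>M}\|_1=\E[|U|\mathbbm{1}_{\{|U|>M\}}]\le\|U\|_p^p M^{1-p}$, and concentration of this sample $L^1$-norm around its mean follows from a Chebyshev bound at moment $\vf p$. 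The bounded part $\mc{U}_X^{\le M}$ is handled by the template of \cite{Borgs}: McDiarmid's bounded-differences inequality applied to $X\mapsto\|\mc{U}_X^{\le M}\|_\pcut$, whose one-coordinate Lipschitz constant is $O(M/k)$, gives concentration of order $M/\sqrt{k}$; a discretization / $\varepsilon$-net argument then yields $\E\|\mc{U}_X^{\le M}\|_\pcut\le\|U^{\le M}\|_\pcut+O(Mk^{-1/4})$. Combining with $\|U^{\le M}\|_\pcut\le\|U\|_\pcut+\|U^{>M}\|_1$ shows the excess of $\|\mc{U}_X\|_\pcut$ above $\|U\|_\pcut$ to be of order $Mk^{-1/4}+\|U\|_p^p M^{1-p}$. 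Choosing $M\asymp k^{1/(4p)}$ balances the two summands and produces the exponent $k^{-\frac14+\frac1{4p}}$; the $\sqrt{\ln k}$ factor and the polynomial failure probability $3k^{-\vf p}$ arise from a mild logarithmic slack in $M$ together with the Chebyshev bound at moment $\vf p$.

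For the \textbf{lower bound} in (2), no union bound over discrete witnesses is needed, which is why the rate is sharper. Fix near-optimal $S^*,T^*\subseteq[0,1]$ with $\left|\int_{S^*\times T^*}U\right|$ approximating $\|U\|_\pcut$, and set $S_k:=\{i:X_i\in S^*\}$, $T_k:=\{j:X_j\in T^*\}$. Testing the cut norm of $\mc{U}_X$ against the step-function indicators of these index sets gives
\[
\|\mc{U}_X\|_\pcut\ge\frac{1}{k^2}\bigg|\sum_{\substack{i\in S_k,\, j\in T_k\\ i\ne j}}U(X_i,X_j)\bigg|,
\]
a single quadratic-type statistic whose mean is $\frac{k(k-1)}{k^2}\int_{S^*\times T^*}U$. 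Splitting the summands according to whether $|U(X_i,X_j)|\le k^\va$ and applying a Bernstein/Chebyshev bound at moment $\va p$ to each piece yields concentration of order $(\|U\|_1+\|U\|_p)k^{-1/2+\va}\sqrt{\ln k}$ with failure probability $3k^{1-\va p}$, matching the stated bound. The constraint $\va>1/p$ is precisely what is needed for the Chebyshev estimate at moment $\va p$ to produce a useful (i.e., $o(1)$) exceptional probability.

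The \textbf{main obstacle} is the upper bound. The map $X\mapsto\|\mc{U}_X\|_\pcut$ has no uniform Lipschitz constant when $U$ is unbounded, so concentration applies only after truncation; the cut-off $M$ must simultaneously be small enough to keep the McDiarmid increments $O(M/k)$ useful against the $2^{2k}$ possible discrete witnesses $(S,T)\subseteq[k]^2$, and large enough that the $L^1$-tail $\|U^{>M}\|_1$ is asymptotically negligible. This tension dictates both the exponent $\tfrac14-\tfrac1{4p}$ and the polynomial (rather than exponential) form of the exceptional probability, which is the essential new feature of the unbounded setting compared to \cite{Borgs}.
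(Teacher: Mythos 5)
The lower bound sketch is close in spirit to the paper: the paper also anchors the expectation via the fixed-set statistic $\frac{1}{k^2}\mc{U}_X(\mc{S}_1,\mc{S}_2)$ (Lemma \ref{le:lower_expect}), but then applies the martingale concentration of Lemma \ref{le:Azuma_lower} to the full random norm $\|\mc{U}_X\|_\pcut$ rather than trying to concentrate the quadratic statistic directly; the two routes are morally equivalent, though your ``Bernstein/Chebyshev on a single quadratic statistic'' hides a nontrivial dependency issue (the summands are not independent) that the paper's martingale treatment resolves cleanly.

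For the upper bound, however, there is a genuine gap. You propose to handle the tail term $\|\mc{U}_X^{>M}\|_1$ with ``a Chebyshev bound at moment $\vf p$''. But the $r$-th moment of $\|\mc{U}_X^{>M}\|_1$ is, by Jensen, at most $\E[|U|^r\mathbbm{1}_{\{|U|>M\}}]\leq\|U\|_p^p M^{r-p}$ for $1\le r\le p$, while its mean is of order $\|U\|_p^p M^{1-p}$. For $r>1$ the ratio $\E[(\|\mc{U}_X^{>M}\|_1)^r]/(\text{mean})^r\sim M^{p(r-1)}$ blows up, so moment bounds cannot give concentration of $\|\mc{U}_X^{>M}\|_1$ around its mean at the scale $M^{1-p}$. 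The best you can extract is Markov at $r=1$: $\Prob{\|\mc{U}_X^{>M}\|_1>\lambda M^{1-p}}\le\|U\|_p^p/\lambda$, which to achieve failure probability $k^{-\vf p}$ forces $\lambda=k^{\vf p}$ and hence an error term $\sim k^{\vf p}\cdot k^{-\frac14+\frac1{4p}}$. This is off from the claimed bound by a factor $k^{\vf p}$; rebalancing $M$ cannot fix it because the polynomial loss in the tail term propagates into the exponent. In short, under your decomposition $\|\mc{U}_X\|_\pcut\le\|\mc{U}_X^{\le M}\|_\pcut+\|\mc{U}_X^{>M}\|_1$, the second piece simply does not concentrate at the required scale with only polynomially many moments.

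The paper sidesteps this by \emph{not} truncating for the concentration step. The martingale $(\mc{M}_b)$ of Definition \ref{def:martingale} is built from the untruncated $\|\mc{U}_X\|_\pcut$, and Lemma \ref{le:martingale_diff} shows its increments are bounded by a data-dependent quantity involving row sums $\sum_j|U(X_a,X_j)|$ and section means $U_{X_j}$. On the good set $L^0_{\vb,\va}$ (Definition \ref{def:L}) -- whose complement has measure $O(k^{1-\va p})$ by a single Chebyshev estimate at moment $p$ (Proposition \ref{Prop:L_Chebyshev}) -- these increments are uniformly bounded by $\frac{6}{k}\|U\|_1(1+\vb k^\va)$, and the generalised Azuma Lemma \ref{Lemma:generalised_azuma} then gives the dispersion bound directly, with the polynomial failure probability coming from the measure of the bad set rather than from a tail concentration you cannot afford. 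Truncation at $f(k)\asymp k^{1/(4p)}$ is used only for the deterministic expectation bound (Proposition \ref{prop:upper_expect_truncation}), where $\|U-U_k^*\|_1$ can be bounded without any concentration argument. Note also that the good-set threshold $\vb k^{\va}$ with $\va=\vf+1/p$ is a different scale from the truncation level $k^{1/(4p)}$; conflating these two roles is the source of the mismatch in your sketch.
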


\begin{remark}
In the upper bound, the second term within the parentheses is $o(1)$ when $\vf\in \left(0, \frac{p-3}{4p}\right)$, and then the true order of magnitude is $k^{-\frac14+\frac1{4p}}$. This is only possible for $p>3$, however even in the $p\in(2,3]$ regime the total order of magnitude is $O(\sqrt{\ln k} k^{-\frac12+\frac1p+\vf})$, and $\vf$ can be chosen small enough to make this $o(1)$.

In the lower bound, the term $\|U\|_\pcut$ on the RHS seems out of place. However, it is trivially bounded above by $\|U\|_1$, so this lower bound can easily be made dependent only on $L^p$ norms. Also, the reason for that term existing is actually a question of normalization for the cut norm of the samples that we will address in detail in Section \ref{sect:lower}.

We also note that for any kernel $U\in\bigcap_{p\in[1,\infty)} L^p_{sym}([0,1]^2)$ (essentially the class of kernels for which all homomorphism densities are finite, see \cite{KKLSz1}), the order of magnitude of the upper bound can be made arbitrarily close to the $k^{-1/4}$ seen in the bounded case.
\end{remark}

This First Sampling Lemma then allows us to derive the following generalization of the Second Sampling Lemma to unbounded kernels. Note that we do not bound the norm of the difference itself, but rather the cut distance of the two functions. 

\begin{thm}{(Second Sampling Lemma for Unbounded Kernels).}\label{Thm:Second_sampling} 
Let $k\geq 2$ be an integer, $p>2$, and $U\in L_{sym}^p([0,1]^2)$.
For any $\vf\in\left(0, \frac{1}{2}- \frac{1}{p}\right)$ we have that with probability at least
$1-e^{\frac{-k^2}{2\log_2 k}}-4k^{-\vf p}$,
\[
\delta_\pcut(\mc{U}_X,U)\leq \frac{C}{(\ln k)^{\frac12-\frac1{2p}}},
\]
where $C$ depends only on $p$, $\|U\|_1$, $\|U\|_p$ and $\vf$.
\end{thm}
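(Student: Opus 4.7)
The plan is to follow the approach of Borgs--Chayes--Lov\'asz--S\'os--Vesztergombi \cite{Borgs} for graphons, adapted to the $L^p$ setting, by combining a weak regularity approximation of $U$ by a step function with the First Sampling Lemma (Theorem \ref{Thm:First_sampling}). The first ingredient is an $L^p$ version of the Frieze--Kannan weak regularity lemma, obtained by truncating $U^M:=U\mathbbm{1}_{|U|\le M}$, applying the classical bounded weak regularity lemma to produce a partition $\mc{P}$ of size $m\le 2^{CM^2/\varepsilon^2}$ with $\|U^M-(U^M)_{\mc{P}}\|_\pcut\le\varepsilon$, and controlling the tail via $\|U-U^M\|_1\le M^{1-p}\|U\|_p^p$. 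Balancing $M$ and $\varepsilon$ (taking $\varepsilon\asymp M^{1-p}\|U\|_p^p$) yields a step function $U_{\mc{P}}$ with at most $m$ steps such that
\[
\|U-U_{\mc{P}}\|_\pcut\le \eta,\qquad \eta\lesssim \|U\|_p(\log m)^{-\frac12+\frac1{2p}}.
\]
Choosing $m$ polynomial in $k$ so that $\log m\asymp \log k$ produces $\eta$ of the desired order $(\ln k)^{-\frac12+\frac1{2p}}$.

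Next, write $U=U_{\mc{P}}+R$ with $\|R\|_\pcut\le \eta$ and $\|R\|_p\le 2\|U\|_p$, and use the triangle inequality together with $\delta_\pcut\le\|\cdot\|_\pcut$:
\[
\delta_\pcut(\mc{U}_X,U)\le \|R_X\|_\pcut+\delta_\pcut((U_{\mc{P}})_X,U_{\mc{P}})+\|R\|_\pcut.
\]
The first term is controlled by Theorem \ref{Thm:First_sampling}(1) applied to $R$ with some parameter $\vf'$: outside an event of probability $3k^{-\vf' p}$, we get $\|R_X\|_\pcut\le\eta+O(k^{-1/4+1/(4p)+\vf'}\sqrt{\ln k})$, where the polynomial term is asymptotically negligible relative to $\eta$. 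Setting $\vf=\vf' p$ in the failure probability gives the $3k^{-\vf}$ contribution in the statement; the polynomial term is $o(\eta)$ precisely when $\vf'<(p-3)/(4p)$, i.e., $\vf<(p-3)/4$, which matches the regime in the statement. The third term is trivially at most $\eta$.

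For the middle term, let $N_j:=|\{i:X_i\in P_j\}|$ and consider the measure-preserving rearrangement $\psi$ sending the empirical block of mass $N_j/k$ to the true block $P_j$ in an order-preserving manner. Then $(U_{\mc{P}})_X\circ\psi^{-1}$ and $U_{\mc{P}}$ agree except on mismatches between empirical and true block measures, whose cut-norm contribution is controlled by $\sum_j|N_j/k-\mu(P_j)|$ weighted by the block values. A Hoeffding/McDiarmid concentration performed at truncation scale $M\asymp\sqrt{\log_2 k}$ yields the exceptional probability $\exp(-k^2/(2\log_2 k))$ in the statement, under which this discrepancy is $o(\eta)$.

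The main obstacle is the joint calibration of the three parameters $M,\varepsilon,m$: we need $\eta$ of exactly the order $(\ln k)^{-1/2+1/(2p)}$ (which forces $\log m\asymp \log k$), while the polynomial sampling error from Theorem \ref{Thm:First_sampling} and the concentration defect of the sampled step function must both remain $o(\eta)$. A subtle point is that the concentration at level $M\asymp\sqrt{\log_2 k}$ is inherited from the truncation used in establishing the weak regularity approximation, so these two steps cannot be decoupled; the interplay between them is what carves out the precise range $\vf\in(0,\frac{p-3}{4})$ appearing in the final statement.
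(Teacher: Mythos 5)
Your approach is genuinely different from the paper's. You propose to re-prove a BCLSV-style Second Sampling Lemma from scratch in the $L^p$ setting: establish an $L^p$ weak regularity lemma by truncation, approximate $U$ by a step function $U_{\mc{P}}$, control the step-function term via empirical block-measure concentration, and handle the residual $R$ with the First Sampling Lemma. The paper instead takes a more modular route: it truncates $U$ to $U_k^*$ with $\|U_k^*\|_\infty \le f(k)$, invokes the \emph{bounded} Second Sampling Lemma (\cite[Thm 4.7(i)]{Borgs}, Proposition \ref{prop:second_bounded}) directly on $U_k^*$ as a black box, applies Theorem \ref{Thm:First_sampling}(1) to $U-U_k^*$ (whose $k$-sample is exactly $\mc{U}_{k,X}^*-\mc{U}_X$), and balances with $f(k)=c(\ln k)^{1/(2p)}$. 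Your decomposition with $U_{\mc{P}}$ in place of $U_k^*$ is parallel in spirit, but re-exposes the internal regularity-plus-concentration machinery that the paper deliberately leaves encapsulated.

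There are, however, real gaps in your sketch. First, the truncation scale is misstated: you write $M\asymp\sqrt{\log_2 k}$, but your own optimization $M^2/\varepsilon^2\asymp\log m$ with $\varepsilon\asymp M^{1-p}$ forces $M\asymp(\log m)^{1/(2p)}\asymp(\ln k)^{1/(2p)}$, which is precisely the $f(k)$ the paper uses; the $\sqrt{\log_2 k}$ that appears in the final bound is the BCLSV factor $20/\sqrt{\log_2 k}$, not the truncation level. Second, the exceptional probability $e^{-k^2/(2\log_2 k)}$ is asserted to follow from ``Hoeffding/McDiarmid concentration,'' but a naive McDiarmid bound on $\sum_j|N_j/k-\mu(P_j)|$ for an $m$-part partition gives roughly $e^{-\Omega(m)}\sim e^{-\Omega(k/\ln k)}$ when $m\asymp k/\ln k$, not $e^{-k^2/\log_2 k}$; recovering the stated probability requires the actual BCLSV concentration argument, which you would have to reproduce rather than wave at. Third, the closing worry that ``these two steps cannot be decoupled'' is a complication your route creates for itself; the paper's truncate-then-cite approach decouples them cleanly, so that the only calibration is of $f(k)$, and all three error terms are controlled independently. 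Finally, a minor point: the constraint $\vf'<(p-3)/(4p)$ you derive (from requiring the $\sqrt{\ln k}$-term of the FSL bound to be dominated by the first) is sufficient but not necessary for $o(\eta)$; what is needed is only $\vf'<\tfrac12-\tfrac1p$, which is the range used in the paper's Theorem \ref{thm:second_sampling}. That you land on $(p-3)/4$ matches the simplified statement in Section 2, but for $p\in(2,3]$ it leaves an empty range, whereas the proof's parametrization does not.
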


\begin{remark} The reason why the cut norm is not the right thing to look at here is as follows. If we replace $U$ by some $U\circ\psi$ ($\psi\in\Psi$) -- amounting to a relabelling of vertices at the level of graphs --, then the distribution of the random stepfunctions $\mc{U}_X$ will be the same as that of $\mc{U\circ\psi}_X$, so we cannot expect a better bound than the constant $\frac12\sup_{\psi\in\Psi}\|U-U\circ\psi\|_\pcut$.

Also, note again that $p\to\infty$ leads to the order $O(1/\sqrt{\ln k})$ valid for bounded kernels (\cite[Lemma 10.16]{Lovasz}).
\end{remark}

As a corollary, we obtain the following almost sure convergence result for the samples.

\begin{cor}\label{cor:almost_sure_left}
Let $p>4$, and for each integer $k\geq2$, let $X^{(k)}\in[0,1]^k$ be a uniform random $k$-vector. Then for any kernel $U\in L^p_{sym}([0,1]^2)$, the convergence $\delta_\pcut(U,\mc{U}_{X^{(k)}})\to0$ holds with probability 1.
\end{cor}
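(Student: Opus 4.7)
The plan is to obtain the almost-sure convergence by applying Theorem \ref{Thm:Second_sampling} for each $k$ and invoking the Borel--Cantelli lemma. For $p>4$ the range $(0,(p-3)/4)$ extends beyond $1/4$, so one may pick $\vf$ in it; the theorem then gives, for any $\eta>0$ and all sufficiently large $k$,
\[
\Prob{\delta_\pcut(\mc{U}_{X^{(k)}},U)>\eta}\leq e^{-k^2/(2\log_2 k)}+3k^{-\vf},
\]
since the deterministic bound $C(\ln k)^{-1/2+1/(2p)}$ eventually falls below $\eta$. The super-exponential first term is trivially summable in $k$, so only the polynomial tail matters.

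If the polynomial tails are summable along the full sequence, i.e.\ $\vf>1$ (which forces $p>7$), Borel--Cantelli applied to each $\eta=1/m$ directly gives $\delta_\pcut(\mc{U}_{X^{(k)}},U)\to 0$ almost surely. In the genuine regime $4<p\leq 7$ one only has $\vf\leq 1$, and the naive sum over all $k$ diverges. I would therefore first pass to a geometric subsequence $k_n=2^n$, along which $\sum_n k_n^{-\vf}$ converges for every $\vf>0$; Borel--Cantelli then yields $\delta_\pcut(\mc{U}_{X^{(k_n)}},U)\to 0$ almost surely along $(k_n)_n$, provided the samples $(X^{(k)})_k$ are coupled on a single probability space (the natural choice being $X^{(k)}=(X_1,\ldots,X_k)$ for a fixed i.i.d.\ sequence, which is also the only setup in which an almost-sure statement over varying $k$ has content).

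The main obstacle is then the fill-in step: extending convergence from the sparse subsequence to all $k\in[k_n,k_{n+1})$. Under the prefix coupling above, I would compare $\mc{U}_{X^{(k)}}$ with $\mc{U}_{X^{(k_{n+1})}}$ via their shared initial $k$-segment, identifying $\mc{U}_{X^{(k)}}$ after relabelling with an appropriate $k\times k$ sub-block of $\mc{U}_{X^{(k_{n+1})}}$ and estimating the cut distance between them on a common refinement by a First-Sampling-style bound on the contribution of the additional rows and columns. Combining this with the triangle inequality $\delta_\pcut(\mc{U}_{X^{(k)}},U)\leq \delta_\pcut(\mc{U}_{X^{(k)}},\mc{U}_{X^{(k_{n+1})}})+\delta_\pcut(\mc{U}_{X^{(k_{n+1})}},U)$ would then yield the claim. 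The technical difficulty lies precisely in matching step functions of different resolutions while keeping control over the $L^p$-contribution of the complementary sample points uniformly across each dyadic block.
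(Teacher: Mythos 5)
Your Borel--Cantelli strategy is the right one, and the super-exponential term is indeed harmless, but there is a genuine gap in the range $4<p\leq 7$: you leave the fill-in step from the geometric subsequence $k_n=2^n$ to all $k$ as an acknowledged obstacle, and this step is not routine. Comparing $\mc{U}_{X^{(k)}}$ with a $k\times k$ sub-block of $\mc{U}_{X^{(k_{n+1})}}$ on a common refinement is not just a matter of counting extra rows and columns: the two step functions live on grids of incommensurable mesh $1/k$ and $1/k_{n+1}$, and controlling $\delta_\pcut$ between them would itself require a deviation estimate uniform over the whole dyadic block, essentially reproducing the difficulty you set out to avoid. As written, the proposal does not prove the corollary for $4<p\leq 7$.

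The gap is avoidable, because the probability bound you are working with is weaker than what the paper actually establishes. The displayed ``simplified'' statement Theorem \ref{Thm:Second_sampling} carries the tail $3k^{-\vf}$, but the version that is proved, Theorem \ref{thm:second_sampling}, gives the exceptional probability $1-p_{k,\vf,p}=e^{-k^2/(2\log_2 k)}+3k^{-\vf p}$ for $\vf\in(0,\tfrac12-\tfrac1p)$. Choosing $\vf$ close to the upper end of this interval makes $\vf p$ arbitrarily close to $\tfrac{p}{2}-1$, which exceeds $1$ precisely when $p>4$. Concretely, with $\vf=\varepsilon+1/p$ one gets $3k^{-\vf p}=3k^{-1-\varepsilon p}$, summable for any $\varepsilon>0$, and the constraint $\vf<\tfrac12-\tfrac1p$ leaves room for such an $\varepsilon$ exactly when $p>4$. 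Then $\sum_k \Prob{\delta_\pcut(\mc{U}_{X^{(k)}},U)>C(\ln k)^{-1/2+1/(2p)}}<\infty$, and Borel--Cantelli directly gives that the inequality fails only finitely often almost surely; since the deterministic bound tends to $0$, this is the claim. No subsequence, no fill-in, and incidentally no coupling of the $X^{(k)}$ across $k$ is required either, since Borel--Cantelli only uses the marginal law of each $X^{(k)}$.
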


In our proof of the First Sampling Lemma, we shall proceed by establishing intermediate steps, splitting the error into the deterministic, "systematic error" $\E_X\left[ \|\mc{U}_X\|_\pcut \right] -\|U\|_\pcut$ (i.e., comparing the expected value of the random norm to the original), and the random "dispersion" term $\E_X\left[ \|\mc{U}_X\|_\pcut \right] -\|\mc{U}_X\|_\pcut$ (i.e., exploiting concentration inequalities to bound with high probability the error between the expectation and the true random norm).

First, we deal with the ``dispersion'' term (Section \ref{Section:Dispersion}), which is responsible for the  polynomial bound on the size of the exceptional set. Then we proceed to prove a lower bound on the ``systematic error'' (Section \ref{sect:lower}), matching the results obtained in the bounded kernel case.
Finally, we provide an upper bound on the ``systematic error'' (Section \ref{Section:upper_bound}), which is of a significantly larger order of magnitude, and asymptotically matches the bounded kernel case as $p\to\infty$.

The final steps of the proof will be provided in Section \ref{section:proof_main}.
The upper bound will be shown in Theorem \ref{thm:upper_bound}, and the lower bound in Theorem \ref{thm:lower_bound}.

In Section \ref{Section:Banach}, we then turn our attention to the First Sampling Lemma for vector valued kernels. These arise naturally in the context of coloured graph limits or limits of multigraphs, see for instance \cite{KKLSz1}. Whilst being able to maintain the lower bound of the real valued case, the upper bound behaves very differently, and we are only able to obtain a finite upper bound in finite dimensions. We actually suspect that in the infinite dimensional setting, one may be able to construct an unbounded vector valued kernel for which the norms of the samples exhibit very weak or no concentration around their expectation, even under $L^p$ constraints.

In Section \ref{sect:second}, we use our results from Section \ref{section:proof_main} to derive the generalisation of the Second Sampling lemma, i.e., we show a high probability upper bound for $\delta_\pcut(\mc{U}_X, U)$.

Finally, we note that in Subsection \ref{subsect:original}, we sketch an approach to the First Sampling Lemma for unbounded kernels based on the original sub-sampling approach of \cite{Alon}. This approach leads to interesting intermediate results that are not covered by the main approach, however the final bounds obtained are slightly worse than in Theorem \ref{Thm:First_sampling}. The full technical details have therefore been relegated to the Appendix (Section \ref{sect:Appendix}).

%
%
\section{Preliminaries} \label{Section:Preliminaries}
%
%
Throughout this paper, given an integer $a\geq 1$, we shall let $[a]$ denote the set of integers $\left\{1,\ldots,a\right\}$.
For an index set $I$ and a vector $Z\in[0,1]^I$, let \(Z_A\) denote the restriction of \(Z\) to the indices in the set \(A\subset I\) (i.e., \(Z_A\) is a vector of length \(|A|\), which we obtain by keeping the coordinates in the set \(A\) and deleting all other coordinates in \(Z\)).

We shall be making use of the following generalised version of Azuma’s Inequality.
\begin{lemma}\label{Lemma:generalised_azuma}
Let \((\Omega, \mc{A}, \pi)\) be a probability space and \(X\) be a random point of \(\Omega^n\) (chosen according to the product measure). Let \(f_1, \dots, f_\ell : \Omega^n \rightarrow \R\) be measurable and integrable functions, such that for any \(1\leq a\leq n\) the event sets
\[H_{\alpha_a,a,i} :=\Bigg\{\bigg | \E_{X}\left[f_i(X)\Big|X_{[a]}\right]-\E_{X}\left[f_i(X) \Big| X_{[a-1]}\right] \bigg|\geq \alpha_a\Bigg\}\]
for all \(1\leq i\leq \ell\) are contained in a set \(H_{\alpha_a,a}\). Then
\begin{equation*}
    \begin{split}
        \mathds{P}_{X}\left( \Big|f_i(X) - \E_{X}[f_i(X)] \Big| \geq 2\mu \sqrt{\sum_{a=1}^n \alpha_a^2} \quad \text{for some } i\in [\ell]\right) \leq 2 \ell  e^{-2\mu^2} + \mathds{P}_{X}\left(\bigcup_{a \in [n]}H_{\alpha_a,a}\right)
    \end{split}
\end{equation*}
for any \(\mu, \alpha_a \geq 0\).

More precisely, there exists a set $S_\mu$ with $\pi^n(S_\mu)\leq 2 \ell e^{-2\mu^2}$ such that for almost all $X\in\Omega^n\setminus\left(S_\mu\bigcup\cup_{a \in [n]}H_{\alpha_a,a}\right)$, we have
\[
\Big|f_i(X) - \E_{X}[f_i(X)] \Big| < \mu \sqrt{\sum_{a=1}^n \alpha_a^2} \quad \forall i\in [\ell].
\]
\end{lemma}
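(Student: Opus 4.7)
The plan is to reduce the statement to a standard application of Azuma--Hoeffding by replacing each $f_i$ with a surrogate whose Doob martingale has uniformly bounded increments, paying only $\Prob{H}$ for the sites where the surrogate differs from the original.

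First, for each $i\in[\ell]$, I would introduce the Doob martingale $M_a^{(i)}:=\E[f_i(X)\mid X_{[a]}]$ relative to the filtration $\mc{F}_a:=\sigma(X_{[a]})$, so that $M_0^{(i)}=\E[f_i(X)]$, $M_n^{(i)}=f_i(X)$, and the martingale differences $D_a^{(i)}:=M_a^{(i)}-M_{a-1}^{(i)}$ satisfy $|D_a^{(i)}|<\alpha_a$ on $\Omega^n\setminus H_{\alpha_a,a,i}$, hence on $\Omega^n\setminus H_{\alpha_a,a}$ by assumption. Writing $H:=\bigcup_{a\in[n]} H_{\alpha_a,a}$ and $c:=\mu\sqrt{\sum_a\alpha_a^2}$, the union bound combined with splitting over $H$ gives
\[
\Prob{\bigcup_{i\in[\ell]}\{|f_i(X)-\E[f_i(X)]|\geq c\}}\leq \Prob{H}+\sum_{i\in[\ell]}\Prob{\{|f_i(X)-\E[f_i(X)]|\geq c\}\cap H^c},
\]
so it suffices to bound each summand on the right by $2e^{-2\mu^2}$, and to identify the union of the exceptional events as the set $S_\mu$ for the refined statement.

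For each fixed $i$, I would construct a modified $(\mc{F}_a)$-martingale $(\bar{M}_a^{(i)})$ with $\bar{M}_0^{(i)}=M_0^{(i)}$, satisfying $|\bar{M}_a^{(i)}-\bar{M}_{a-1}^{(i)}|\leq\alpha_a$ a.s.\ and coinciding with $M_a^{(i)}$ on $H^c$ for every $a$. A natural route uses the stopping time $\tau:=\min\{a\in[n]:X\in H_{\alpha_a,a}\}$ (with $\tau=n+1$ on $H^c$): set $\bar{M}_a^{(i)}:=M_a^{(i)}$ as long as no bad step has been entered, and from time $\tau$ onward replace the remaining increments by truncated-and-centered surrogates that are $\alpha_a$-bounded and still martingale differences. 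The standard Azuma--Hoeffding inequality applied to $\bar{M}^{(i)}$ then yields $\Prob{|\bar{M}_n^{(i)}-\bar{M}_0^{(i)}|\geq c}\leq 2e^{-2\mu^2}$, and since $\bar{M}_n^{(i)}=f_i$ and $\bar{M}_0^{(i)}=\E[f_i]$ on $H^c$, the event $\{|f_i-\E[f_i]|\geq c\}\cap H^c$ is contained in the Azuma failure event $\{|\bar{M}_n^{(i)}-\bar{M}_0^{(i)}|\geq c\}$, giving the desired term-wise bound. Taking $S_\mu$ to be the union over $i\in[\ell]$ of these Azuma failure events yields a measurable set of probability at most $2\ell e^{-2\mu^2}$ outside of which, together with $H$, the deterministic inequality for all $i$ holds.

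The main obstacle is the construction of $\bar{M}^{(i)}$ satisfying all three properties simultaneously: plain truncation of $D_a^{(i)}$ destroys the martingale property, while plain stopping at $\tau$ preserves it but permits a potentially large jump at time $\tau$ itself. The remedy is to accompany the truncation by an $\mc{F}_{a-1}$-measurable centering term so that the modified increment has conditional mean zero given $\mc{F}_{a-1}$; the crucial point is that on $H^c$ we have $\tau=n+1$, so no modification is ever triggered and the agreement $\bar{M}_a^{(i)}=M_a^{(i)}$ holds automatically throughout, which is exactly what is needed to transfer the Azuma bound back to $f_i$ on the good event.
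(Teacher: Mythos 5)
The high-level strategy (union bound over $i$, split along $H := \bigcup_a H_{\alpha_a,a}$, then push a surrogate bounded-difference martingale through Azuma--Hoeffding) is the right moral idea and matches the spirit of the cited result of Tao and Vu, but the specific surrogate construction you propose is broken, and, more importantly, it cannot be repaired. The switching rule at the stopping time $\tau$ cannot be made predictable: for the $a$-th increment one may only use $\mc{F}_{a-1}$-measurable information to decide whether to keep $D_a^{(i)}$ or replace it, so the admissible indicator is $\mathbf{1}_{\{\tau\geq a\}}$, not $\mathbf{1}_{\{\tau>a\}}$. Keeping $D_a^{(i)}$ on $\{\tau\geq a\}$ retains the bad increment at $a=\tau$ itself, while switching to a surrogate on $\{\tau\leq a\}$ uses $\mc{F}_a$-information and destroys $\E[\bar D_a^{(i)}\mid\mc{F}_{a-1}]=0$. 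Your proposed fix of truncating and subtracting $\E[\mathrm{trunc}(D_a^{(i)})\mid\mc{F}_{a-1}]$ does restore the martingale property, but the recentering term is a nonzero $\mc{F}_{a-1}$-measurable shift even on $H^c$, so $\bar M^{(i)}_n\neq f_i$ on $H^c$ and the identification of $\{|f_i-\E f_i|\geq c\}\cap H^c$ with an Azuma failure event for $\bar M^{(i)}$ no longer holds.

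The obstruction is not merely technical: in general, no martingale $\bar M^{(i)}$ with $\bar M^{(i)}_0=\E f_i$, a.s.\ increments $|\bar D^{(i)}_a|\leq\alpha_a$, and $\bar M^{(i)}_n=f_i$ on $H^c$ exists. Already for $n=1$, take $D_1\in\{-\epsilon, L\}$ with $\Prob{D_1=L}=\delta$, where $L=\epsilon(1-\delta)/\delta$ ensures $\E D_1=0$, and set $\alpha_1=2\epsilon$ with $\delta<1/3$ so that $L>\alpha_1$. Then $G=\{|D_1|<\alpha_1\}=\{D_1=-\epsilon\}$. Requiring $\bar D_1=D_1=-\epsilon$ on $G$, $|\bar D_1|\leq\alpha_1$, and $\E\bar D_1=0$ forces $\bar D_1|_{G^c}=\epsilon(1-\delta)/\delta=L>\alpha_1$, a contradiction. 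The lemma itself is not contradicted by this example (the additive $\Prob{H}$ term absorbs the failure), but your reduction to plain Azuma via an exact martingale substitute is. The actual argument in Tao--Vu's Prop.~34, which the paper cites and refers to, controls the conditional moment generating function $\E\bigl[e^{\lambda\sum_a D_a^{(i)}}\prod_a\mathbf{1}_{G_a}\bigr]$ directly by successive conditioning, exploiting the indicators to restrict to the event where the increments are bounded rather than trying to manufacture a globally bounded martingale; this bypasses the nonexistence problem above.
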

The proof of this claim goes similarly to the one in \cite[Prop. 34]{TV}.

Another useful statement is a generalisation of the well known Chebyshev's inequality, which we will use in the following form.
\begin{lemma}\label{Lemma:generalised_Chebyshev}
For a real random variable \(Z\) and any \(\delta>0, p\geq1\) we have that
\begin{equation*}
    \begin{split}
        &\mathds{P}_{Z}\left(\Big|Z-\E_{Z}[Z]\Big|>\delta\right)\leq\min\left\{1,\frac{2^p\E_{Z}[|Z|^p]}{\delta^p}\right\}.
    \end{split}
\end{equation*}
\end{lemma}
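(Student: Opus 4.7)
The plan is to reduce the claim to Markov's inequality applied to a suitable nonnegative random variable, and then control the $p$-th central moment by the $p$-th raw moment via the triangle inequality in $L^p$.

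First I would observe that the bound by $1$ on the right-hand side is trivial, since any probability is at most $1$. So the content of the statement is the inequality
\[
\mathds{P}_Z\left(|Z-\E[Z]|>\delta\right)\leq \frac{2^p\,\E[|Z|^p]}{\delta^p}.
\]
For this, I would apply Markov's inequality to the nonnegative random variable $|Z-\E[Z]|^p$: since the event $\{|Z-\E[Z]|>\delta\}$ coincides with $\{|Z-\E[Z]|^p>\delta^p\}$, we get
\[
\mathds{P}_Z\left(|Z-\E[Z]|>\delta\right)\;\leq\;\frac{\E\bigl[|Z-\E[Z]|^p\bigr]}{\delta^p}.
\]

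The remaining step is to bound the centered $p$-th moment by $2^p\,\E[|Z|^p]$. Here I would use Minkowski's inequality (i.e., the triangle inequality in $L^p$ with $p\geq 1$), writing $\E[Z]$ as a constant random variable, to conclude
\[
\bigl(\E[|Z-\E[Z]|^p]\bigr)^{1/p}\;\leq\;\bigl(\E[|Z|^p]\bigr)^{1/p}+|\E[Z]|.
\]
Then Jensen's inequality gives $|\E[Z]|\leq\E[|Z|]\leq(\E[|Z|^p])^{1/p}$ (the last step by Jensen applied to the convex function $t\mapsto t^p$ on $[0,\infty)$, or equivalently the $L^1\!\subseteq\! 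L^p$ inclusion on a probability space). Combining these yields
\[
\bigl(\E[|Z-\E[Z]|^p]\bigr)^{1/p}\leq 2\bigl(\E[|Z|^p]\bigr)^{1/p},
\]
and raising both sides to the $p$-th power gives the desired factor of $2^p$.

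There is no real obstacle here; the argument is routine and the only choice is whether to go through Minkowski plus Jensen, or instead to use the elementary inequality $(a+b)^p\leq 2^{p-1}(a^p+b^p)$ applied pointwise to $|Z-\E[Z]|^p\leq(|Z|+|\E[Z]|)^p$ and then take expectations (which gives the same bound). Either way, combining with Markov and the trivial bound by $1$ produces the stated minimum.
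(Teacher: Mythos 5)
Your proof is correct and essentially the same as the paper's: both start from Markov's inequality applied to $|Z-\E[Z]|^p$ and then bound the centered $p$-th moment by $2^p\E[|Z|^p]$. Your primary route does this via Minkowski plus Jensen (working with $L^p$ norms and raising to the $p$-th power at the end), whereas the paper uses the pointwise convexity bound $(a+b)^p\leq 2^{p-1}(a^p+b^p)$ followed by $|\E[Z]|^p\leq\E[|Z|^p]$ — but you explicitly note this alternative at the end and correctly observe it yields the same constant, so the two arguments are interchangeable.
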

\begin{proof}
Indeed, we have
\begin{equation*}
    \begin{split}
        \mathds{P}_{Z}\left(\Big|Z-\E_{Z}[Z]\Big|>\delta\right)
        & \leq \frac{\E_{Z}\bigg[\Big|Z-\E_{Z}[Z]\Big|^p\bigg]}{\delta^p}
        \leq \frac{\E_Z\left[\left(|Z|+|\E_Z[Z]|\right)^p\right]}{\delta^p}\\
        & \leq \frac{2^{p-1}\E_Z\left[|Z|^p+|\E_Z[Z]|^p\right]}{\delta^p}
        \leq \frac{2^p\E_{Z}[|Z|^p]}{\delta^p},
    \end{split}
\end{equation*}
as desired.
\end{proof}

At some point, it will actually be more convenient to work with the following one-sided version of the cut norm (which is a semi-norm):
\[\|\mc{A}\|^+_\pcut = \frac{1}{n^2} \max_{S,T \subseteq[n]} \sum_{i\in S,j\in T} \mc{A}_{ij}\]
for an \(n\times n\) matrix \(\mc{A}\), and
\[\|U\|^+_\pcut = \sup_{S,T \subseteq[0,1]} \int_{S\times T} U(x, y) dx dy\]
for a function $U\in L^1([0,1]^2)$. We note that
\begin{equation}\label{eq:cut_vs-cut+}
\|\mc{A}\|_\pcut= \max\{\|\mc{A}\|^+_\pcut, \|-\mc{A}\|^+_\pcut\}, \quad \text{ and } \quad \|U\|_\pcut= \max\{\|U\|^+_\pcut, \|-U\|^+_\pcut\}. 
\end{equation}

For a stepfunction $W\in L^1_{sym}([0,1]^2)$ over the steps $[(i-1)/n,i/n]$ ($i\in[n]$), let us further define $W_{i,j} \in \mathbb{R}$ as the a.e. value on $[(i-1)/n,i/n]\times[(j-1)/n,j/n]$, and
\[W(Z_1, Z_2) := \sum_{i\in Z_1,\,j\in Z_2} W_{i,j}\]
for finite subsets \(Z_1, Z_2 \subset [n]\).
Similarly, for any \(\mc{A} \in\mathbb{R}^{n\times n} \), any set \(Q_1\) of rows and any set \(Q_2\) of columns, we write \[\mc{A}(Q_1, Q_2) = \sum_{i\in Q_1,j \in Q_2} \mc{A}_{ij}.\]

Note that this means
\begin{equation*}
\|\mc{A}\|_\pcut = \frac{1}{n^2} \max_{S,T \subseteq[n]} \left|\mc{A}(S,T)\right| \quad \text{ and }\quad \|\mc{A}\|^+_\pcut = \frac{1}{n^2} \max_{S,T \subseteq[n]} \mc{A}(S,T).\end{equation*}

Also, simple monotonicity arguments for step functions yield the corresponding result for step functions.
\begin{lemma}\label{le:step}
For any stepfunction $W\in L^1_{sym}([0,1]^2)$ over the steps $[(i-1)/n,i/n]$ ($i\in[n]$), we have
\[
\|W\|_\pcut= \frac{1}{n^2} \max_{S,T \subseteq[n]} \left|W(S,T)\right|.
\]
\end{lemma}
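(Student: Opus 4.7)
The plan is to reduce the supremum in the definition of $\|W\|_\pcut$, taken over arbitrary measurable $S,T\subseteq[0,1]$, to a finite maximum over subsets of $[n]$, exploiting the stepfunction structure of $W$. By \eqref{eq:cut_vs-cut+} it suffices to establish the analogous identity for the one-sided cut norm $\|\cdot\|^+_\pcut$: applying the same result to $-W$ and taking the maximum then yields the absolute-value version.

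First I would parametrize measurable sets $S,T \subseteq [0,1]$ by their relative occupation on each step, setting $s_i := n\,|S \cap [(i-1)/n, i/n]|$ and $t_j := n\,|T \cap [(j-1)/n, j/n]|$, so that $s,t \in [0,1]^n$. Since $W$ is constant equal to $W_{i,j}$ on each product block, a direct computation gives
\[
\int_{S\times T} W(x,y)\,dx\,dy \;=\; \frac{1}{n^2}\sum_{i,j\in[n]} s_i\,t_j\,W_{i,j} \;=:\; \frac{1}{n^2} F(s,t).
\]
Conversely, every $(s,t) \in [0,1]^{2n}$ is realized by an honest pair of measurable sets (for instance $S := \bigcup_i [(i-1)/n,\,(i-1+s_i)/n]$, and analogously for $T$). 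Hence
\[
\|W\|^+_\pcut \;=\; \frac{1}{n^2}\,\max_{(s,t)\in[0,1]^{2n}} F(s,t),
\]
the maximum being attained by continuity of $F$ on the compact cube.

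The crux is the bilinearity of $F$. Fixing any optimizer $(s^*,t^*)$, the map $s\mapsto F(s,t^*)$ is affine on $[0,1]^n$, hence its maximum is attained at some vertex $s^{\sharp}\in\{0,1\}^n$; the map $t\mapsto F(s^{\sharp},t)$ is then likewise affine, with maximum at some $t^{\sharp}\in\{0,1\}^n$. Identifying $s^{\sharp},t^{\sharp}$ with subsets $S',T'\subseteq[n]$, one has $F(s^{\sharp},t^{\sharp}) = W(S',T')$, which gives $\|W\|^+_\pcut \leq \frac{1}{n^2}\max_{S',T'\subseteq[n]}W(S',T')$. The reverse inequality is immediate, since any subset of $[n]$ corresponds to a union of whole steps in $[0,1]$. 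Applying the same reasoning to $-W$ and combining through \eqref{eq:cut_vs-cut+} completes the argument.

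There is no serious obstacle in this proof: it amounts to a routine bilinear-optimization-on-a-cube observation, and the only point requiring care is the bookkeeping that translates measurable subsets of $[0,1]$ into points of $[0,1]^{2n}$ and back to subsets of $[n]$.
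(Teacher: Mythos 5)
Your proof is correct and rests on the same core observation as the paper's: the map from $([0,1]^n)^2$ to $\R$ is linear in each coordinate, so extrema of the (absolute value of the) integral are attained at vertices of the cube, i.e., at unions of whole steps. The paper does this in place via a monotonicity argument on $\int_{\mc{S}\times\mc{T}}W$ as each $s_i$ varies, while you package it as a bilinear form and first reduce to the one-sided norm via \eqref{eq:cut_vs-cut+}; these are cosmetic differences.
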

\begin{proof} Consider arbitrary measurable subsets $\mc{S},\mc{T}\subset [0,1]$, and let $i\in[n]$.

If $\int_{[(i-1)/n,i/n]\times T} W\geq 0$, then 
\[
\int_{(\mc{S}\setminus[(i-1)/n,i/n])\times\mc{T}} W\leq\int_{\mc{S}\times\mc{T}}W\leq\int_{(\mc{S}\cup[(i-1)/n,i/n])\times\mc{T}} W,
\]
and if $\int_{[(i-1)/n,i/n]\times T} W< 0$, then
\[
\int_{(\mc{S}\cup[(i-1)/n,i/n])\times\mc{T}} W\leq \int_{\mc{S}\times\mc{T}} W\leq\int_{(\mc{S}\setminus[(i-1)/n,i/n])\times\mc{T}} W.
\]
 This means that fixing the set $\mc{T}$, the absolute value of integral over $\mc{S}\times \mc{T}$ can be maximized by setting each $\lambda(\mc{S}\cap[(i-1)/n,i/n])$ to either $0$ or $1/n$. The same is valid swapping the roles of $\mc{S}$ and $\mc{T}$, and hence the maximum is achieved when each $\lambda(\mc{S}\cap[(i-1)/n,i/n])$ and $\lambda(\mc{T}\cap[(i-1)/n,i/n])$ is either $0$ or $1/n$. In other words, for any $\mc{S}$ and $\mc{T}$, there exist subsets $S,T\subset[k]$ such that
\[
\left|\int_{\mc{S}\times\mc{T}} W\right|\leq \left|\int_{\left(\bigcup_{i\in S}[(i-1)/n,i/n]\right)\times\left(\bigcup_{j\in T}[(j-1)/n,j/n]\right)} W\right|= \frac{1}{n^2}|W(S,T)|,
\]
and we are done.
\end{proof}

For a.e. \(x\in [0,1]\), let us write
\[ U_x:=\E_{z\in [0,1]}\left[ \big|U(x,z)\big|\right] =\int\limits_0^1 \big|U(x,z)\big| dz.\]

\begin{definition}\label{def:L}
Let $\vb,\va>0$ and $k\geq 2$ an integer. Let $X=(X_1, X_2, \dots, X_k) \in [0,1]^k$ be a uniform random $k$-vector and let $U\in L^1_{sym}([0,1]^2)$ be a kernel. We define
\[
\Delta_{U}(X):=\max_{j\in [k]}\left\{\big| U_{X_j}- \|U\|_1\big|, \left|\frac{\sum_{i\in[k]\setminus\{j\}}\left|U(X_i,X_j)\right|}{k-1}-\|U\|_{1}\right|\right\},
\]
and
$ L_{\vb,\va}^0\subseteq [0,1]^k$ as
\begin{equation*}
    \begin{split}
       L_{\vb,\va}^0:= & \left\{X\in [0,1]^k \quad \Big| \quad \Delta_U(X) 
       \leq \vb k^{\va} \|U\|_1 \right\}.
    \end{split}
\end{equation*}
\end{definition}

\bigskip

 We claim that  \(\mathds{P}_{X} \left(X\in L_{\vb,\va}^0\right)\)
is
 very close to \(1\),
i.e., the section integrals and the averages are both highly concentrated around their expectation \(\|U\|_1\).

\begin{prop}\label{Prop:L_Chebyshev}
Let $k\geq 1$ be an integer, $X=(X_1, X_2, \dots, X_k) \in [0,1]^k$ a uniform random $k$-vector. For arbitrary \(p > \max\left\{1,\frac{1}{\va}\right\}\), we have
\begin{equation*}
    \begin{split}
        \mathds{P}_{X} \left(X\in L_{\vb,\va}^0\right) \geq & 1 -2 k\cdot  \frac{2^p\|U\|_p^p }{\vb^p k^{\va p}\|U\|_1^p}   \rightarrow 1  \quad \text{ as } k\rightarrow \infty \\
    \end{split}
\end{equation*}
for any kernel $U\in L^p_{sym}([0,1]^2)$.
\end{prop}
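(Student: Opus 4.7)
The plan is to control the two summands inside the maximum in $\Delta_U(X)$ separately via the generalised Chebyshev inequality (Lemma \ref{Lemma:generalised_Chebyshev}), and then to take a union bound over $j\in[k]$.

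First I would look at the ``row marginal'' term $U_{X_j}$. Since $X_j$ is uniform on $[0,1]$, one has $\E_X[U_{X_j}]=\|U\|_1$. For the $p$-th absolute moment, Jensen's inequality (applied to the convex function $t\mapsto t^p$ on $[0,\infty)$, using that $U_{X_j}\geq 0$) gives
\[
\E_X\left[U_{X_j}^p\right]=\int_0^1\!\left(\int_0^1|U(x,z)|\,dz\right)^{\!p}dx\leq\int_0^1\!\!\int_0^1|U(x,z)|^p\,dz\,dx=\|U\|_p^p.
\]
Applying Lemma \ref{Lemma:generalised_Chebyshev} with $\delta=\vb k^\va\|U\|_1$ then yields
\[
\mathds{P}_X\!\left(|U_{X_j}-\|U\|_1|>\vb k^\va\|U\|_1\right)\leq\frac{2^p\|U\|_p^p}{\vb^p k^{\va p}\|U\|_1^p}.
\]

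Next I would handle the ``empirical average'' term $Z_j:=\frac{1}{k-1}\sum_{i\in[k]\setminus\{j\}}|U(X_i,X_j)|$. Conditioning on $X_j$, the random variables $|U(X_i,X_j)|$, $i\neq j$, are i.i.d.\ with mean $U_{X_j}$, so $\E_X[Z_j]=\E[U_{X_j}]=\|U\|_1$. Another application of Jensen's inequality (this time to the average, again using $p\geq 1$) gives
\[
Z_j^p=\left(\frac{1}{k-1}\sum_{i\neq j}|U(X_i,X_j)|\right)^{\!p}\leq\frac{1}{k-1}\sum_{i\neq j}|U(X_i,X_j)|^p,
\]
and taking expectations coordinate by coordinate yields $\E_X[Z_j^p]\leq\|U\|_p^p$. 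Lemma \ref{Lemma:generalised_Chebyshev} now produces an identical bound
\[
\mathds{P}_X\!\left(|Z_j-\|U\|_1|>\vb k^\va\|U\|_1\right)\leq\frac{2^p\|U\|_p^p}{\vb^p k^{\va p}\|U\|_1^p}.
\]

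Finally, I would conclude by a union bound over the two terms inside the max and over $j\in[k]$:
\[
\mathds{P}_X\!\left(X\notin L_{\vb,\va}^0\right)\leq\sum_{j=1}^k 2\cdot\frac{2^p\|U\|_p^p}{\vb^p k^{\va p}\|U\|_1^p}=\frac{2k\cdot 2^p\|U\|_p^p}{\vb^p k^{\va p}\|U\|_1^p},
\]
which is exactly the claimed bound, and which tends to $0$ precisely under the hypothesis $\va p>1$ (equivalently $p>1/\va$). There is no real obstacle here; the only mildly delicate step is making sure that the two uses of Jensen's inequality are valid, which they are because $t\mapsto t^p$ is convex on $[0,\infty)$ for $p\geq 1$ and all quantities are nonnegative.
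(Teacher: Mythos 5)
Your proof is correct and follows essentially the same route as the paper: both arguments bound $\E[U_{X_j}^p]$ and $\E[Z_j^p]$ by $\|U\|_p^p$ via Jensen's inequality, invoke Lemma~\ref{Lemma:generalised_Chebyshev} with $\delta=\vb k^\va\|U\|_1$ for each of the two events, and conclude with a union bound over $j\in[k]$. The only cosmetic difference is the order in which the two terms are treated.
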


\begin{proof}
For an arbitrary \(j\in [k]\) substituting \(Z=\frac{\sum_{i\in[k]\setminus\{j\}} \left|U(X_i,X_j)\right|}{k-1}\) into the generalised Chebyshev's inequality (Lemma~\ref{Lemma:generalised_Chebyshev}) we obtain that
\begin{equation*}
\begin{split}
    \mathds{P}_{X} \left(\left| \frac{\sum_{i\in[k]\setminus\{j\}} \left|U(X_i,X_j)\right|}{k-1}- \|U\|_1 \right|>\delta_0\right)
    & \leq \min\left\{ 1,\frac{ 2^p \E_{X}\left[\left| \frac{\sum_{i\in[k]\setminus\{j\}} \left|U(X_i,X_j)\right|}{k-1} \right|^p\right]}{\delta_0^p}\right\}.
\end{split}
\end{equation*}
By the convexity of the function \(x^{p}\),
\begin{equation*}
    \begin{split}
        \E_{X}\left[\left| \frac{\sum_{i\in[k]\setminus\{j\}} \left|U(X_i,X_j)\right|}{k-1} \right|^p\right] 
        & \leq \E_{X}\left[\frac{\sum_{i\in[k]\setminus\{j\}} \left|U(X_i,X_j)\right|^p}{k-1}\right] \\
        & = \E_{(x,y)\in [0,1]^2}\left[ \left|U(x,y)\right|^{p}\right] = \|U\|_{p}^{p},
    \end{split}
\end{equation*}
a constant in \(k\).

Hence for all \(p\geq 1\)
\begin{equation*}
\begin{split}
    \mathds{P}_{X} \left(\left|\frac{\sum_{i\in[k]\setminus\{j\}} \left|U(X_i,X_j)\right|}{k-1}- \|U\|_1 \right|>\delta_0\right)
    & \leq \min\left\{ 1,\frac{ 2^p\|U\|_p^p }{\delta_0^p}\right\}.
\end{split}
\end{equation*}

On the other hand, substituting \(Z= U_{x}\) into the generalised Chebyshev's inequality (Lemma~\ref{Lemma:generalised_Chebyshev}) we obtain that
\begin{equation*}
\begin{split}
    \mathds{P}_{x\in [0,1]} \left(\big| U_{x}- \|U\|_1 \big|>\delta_1\right)
    \leq \min\left\{ 1,\frac{ 2^p  \E_{x\in [0,1]}\left[\left| U_{x}\right|^p\right]}{\delta_1^p}\right\}.
\end{split}
\end{equation*}
Again by Jensen's inequality,
\begin{equation*}
    \begin{split}
    \E_{x\in [0,1]}\left[\left(U_{x}\right)^p\right] \leq \E_{(x,y)\in [0,1]^2}\left[\left|U(x,y)\right|^p\right]  = \|U\|_{p}^{p}.
    \end{split}
\end{equation*}

Thus for any \(p \geq 1\)
\begin{equation*}
\begin{split}
    \mathds{P}_{x\in [0,1]} \left(\big| U_{x}- \|U\|_1 \big|>\delta_1\right)
    & \leq \min\left\{ 1,\frac{ 2^p\|U\|_p^p }{\delta_1^p}\right\}.
\end{split}
\end{equation*}

Finally
\begin{align*}
        &\mathds{P}_{X\in [0,1]^k} \left(X\notin L_{\vb,\va}^0\right)\\
        = & \mathds{P} _{X\in [0,1]^k}\left(\exists j\in [k] : \left(\left|U_{X_j}-\|U\|_1\right| > \vb k^\va\|U\|_1\right) \vee \left(\left|\frac{\sum_{i\in[k]\setminus\{j\}} \left|U(X_i,X_j)\right|}{k-1}-\|U\|_1\right| > \vb k^\va\|U\|_1\right)\right) \\
        \leq & k \cdot \mathds{P}_{x\in [0,1]} \left(\big|U_{x}-\|U\|_1\big| >\vb k^\va\|U\|_1\right) + k\cdot \mathds{P}_{X\in [0,1]^k} \left( \left|\frac{\sum_{i\in [k-1]} \left|U(X_i,X_k)\right|}{k-1}-\|U\|_1\right| > \vb k^\va \|U\|_1\right) \\
       \leq & k \cdot \min\left\{1, \frac{2^p\|U\|_p^p }{(\vb k^\va\|U\|_1)^p}\right\}+ k\cdot \min\left\{1, \frac{2^p\|U\|_p^p }{(\vb k^\va\|U\|_1)^p}\right\}  = 2k \cdot \min\left\{1, \frac{2^p\|U\|_p^p }{(\vb k^\va\|U\|_1)^p}\right\}.
\end{align*}
Hence
\begin{equation*}
    \begin{split}
       & \mathds{P}_{X} \left(X\in L_{\vb,\va}^0\right) \geq 1 - 2k \cdot \min\left\{1, \frac{2^p\|U\|_p^p }{(\vb k^\va\|U\|_1)^p}\right\} 
        \geq  1-2k\cdot \frac{2^p\|U\|_p^p }{\vb^p k^{\va p}\|U\|_1^p} \stackrel{k\to\infty}{\longrightarrow} 1,
    \end{split}
\end{equation*}
provided $\va p>1$.
\end{proof}

%
%
\section{First Sampling Lemma: Bounding the ``dispersion''}\label{Section:Dispersion}
%
%

To simplify the statement of various results and avoid repetition, in what follows, the notation $*\in\left\{\text{ },+\right\}$ means that the superscript $*$ is used to either denote the $+$ symbol, or the empty symbol. For instance this means that when $*=+$, then $\|\cdot\|^*_{\pcut}$ is the one-sided cut norm, whereas when $*$ is the empty symbol, then $\|\cdot\|^*_{\pcut}$ is the ordinary cut norm.

\begin{definition}\label{def:martingale}
Let $k\geq 1$ be an integer, $X=(X_1, X_2, \dots, X_k) \in [0,1]^k$ a uniform random $k$-vector and let $U\in L^1_{sym}([0,1]^2)$ be a kernel.
Let the martingale $(\mc{M}_b(X))_{1\leq b\leq k+1}$
be defined through 
$\mc{M}_b (X):=\E_{X_{[k]\setminus [b-1]}}\|\mc{U}_X\|_\pcut$ for all $b\in[k+1]$ (in particular, $\mc{M}_{k+1}(X)=\|\mc{U}_X\|_\pcut$).
 
Further, define the martingale $(\mc{M}_b^+(X))_{1\leq b\leq k+1}$
as $\mc{M}_b^+(X):=\E_{X_{[k]\setminus [b-1]}}\left[\|\mc{U}_X\|_\pcut^+\right]$ for all $b\in[k+1]$ (in particular, $\mc{M}_{k+1}^+(X)=\|\mc{U}_X\|_\pcut^+$).
\end{definition}

Given an integer $a\in [k]$, let $Y^{(a)}$ be the random $k$-vector obtained by replacing the $a$-th term $X_a$ in $X$ by $X_0$, i.e., $Y^{(a)}=(X_1,\ldots,X_{a-1},X_0,X_{a+1},\ldots, X_k)$.

\begin{lemma}\label{le:martingale_diff}
Let $U\in L^1_{sym}([0,1]^2)$ be a kernel, $k\geq 1$ an integer, $X=(X_1, X_2, \dots, X_k) \in [0,1]^k$ a uniform random $k$-vector and $a\in[k]$. Let further $*\in\left\{\text{ },+\right\}$ (i.e., the below is valid both for the cut norm and its one-sided version, and both martingales). We then have the following.
\begin{enumerate}
\item[($\mc{A}$)] Almost surely
\begin{equation*}
 \left| k^2 \cdot \|\mc{U}_X\|_{\pcut}^*-k^2 \cdot \|\mc{U}_{Y^{(a)}}\|_{\pcut}^* \right| \leq 2 \sum_{j \in[k]\setminus \{a\}} \left|U(X_0,X_j)\right|+ 2 \sum_{ j\in[k]\setminus \{a\}} \left|U(X_a,X_j) \right|.
\end{equation*}
\item[($\mc{B}$)] Almost surely
\begin{equation*}
\left|\mc{M}_{a+1}^*(X)-\mc{M}_{a+1}^*(Y^{(a)})\right| 
\leq  \frac{2}{k^2} \left[\sum_{j \in [a-1]} \left(\big|U(X_0,X_j)\big|+ \big|U(X_a,X_j) \big|\right) + (k-a) \cdot \left(U_{X_0}+ U_{X_a}\right)\right].
\end{equation*}
\item[($\mc{C}$)] Almost surely
\begin{equation*}
    \left|\mc{M}_a^*(X)-\mc{M}_{a+1}^*(X)\right|\leq \frac{2}{k^2} \left[\sum_{j \in [a-1]} \big|U(X_a,X_j)\big|+ \sum_{j \in [a-1]} U_{X_j} + (k-a) \cdot U_{X_a}+ (k-a) \cdot \|U\|_1\right].
\end{equation*}
\end{enumerate}
\end{lemma}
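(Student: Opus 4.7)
For part $(\mc{A})$, the plan is to observe that $\mc{U}_X$ and $\mc{U}_{Y^{(a)}}$ agree on every entry $(i,j)$ with $\{i,j\} \cap \{a\} = \emptyset$ and disagree only on the $a$-th row and the $a$-th column, with the diagonal entry being zero in both. Hence for any fixed $S, T \subseteq [k]$, the difference $|\mc{U}_X(S,T) - \mc{U}_{Y^{(a)}}(S,T)|$ is bounded by the sum over the changed entries lying in $S \times T$ of $|U(X_a, X_j) - U(X_0, X_j)|$ (for the row, when $a \in S$) or $|U(X_i, X_a) - U(X_i, X_0)|$ (for the column, when $a \in T$). Upper bounding each such difference by the sum of the two absolute values, discarding the restrictions to $S$ and $T$, and using the symmetry of $U$ to merge row and column contributions gives the $2\sum_{j \in [k]\setminus\{a\}}(|U(X_a, X_j)| + |U(X_0, X_j)|)$ bound. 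Since this uniform estimate holds for every pair $(S,T)$, writing $k^2\|\mc{U}_X\|^*_\pcut - k^2\|\mc{U}_{Y^{(a)}}\|^*_\pcut$ as a difference of suprema (with or without absolute values inside) and using $|\sup f - \sup g| \leq \sup|f-g|$ yields $(\mc{A})$ for both variants of the cut norm.

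For part $(\mc{B})$, both $\mc{M}_{a+1}^*(X)$ and $\mc{M}_{a+1}^*(Y^{(a)})$ are obtained by integrating over $X_{a+1}, \ldots, X_k$ while the first $a-1$ coordinates and the $a$-th coordinate ($X_a$ or $X_0$, respectively) are fixed. Hence by the triangle inequality for expectations,
\[
|\mc{M}_{a+1}^*(X) - \mc{M}_{a+1}^*(Y^{(a)})| \leq \E_{X_{a+1},\ldots,X_k}\bigl|\|\mc{U}_X\|^*_\pcut - \|\mc{U}_{Y^{(a)}}\|^*_\pcut\bigr|,
\]
and the integrand is controlled by part $(\mc{A})$. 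I would then split the sum $\sum_{j \in [k]\setminus\{a\}}$ into $j \in [a-1]$, where $X_j$ is fixed and the absolute values pass through the expectation unchanged, and $j \in \{a+1,\ldots,k\}$, where by independence $\E_{X_j}|U(X_0, X_j)| = U_{X_0}$ and $\E_{X_j}|U(X_a, X_j)| = U_{X_a}$ by definition of $U_x$. Collecting the contributions produces exactly $(\mc{B})$.

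For part $(\mc{C})$, the tower property together with the fact that, conditionally on $X_{[a-1]}$, the variables $X_a$ and $X_0$ are i.i.d., gives $\mc{M}_a^*(X) = \E_{X_0}[\mc{M}_{a+1}^*(Y^{(a)}) \mid X_{[a-1]}]$. Consequently $\mc{M}_a^*(X) - \mc{M}_{a+1}^*(X) = \E_{X_0}[\mc{M}_{a+1}^*(Y^{(a)}) - \mc{M}_{a+1}^*(X) \mid X_{[a-1]}]$, whose absolute value is at most $\E_{X_0}|\mc{M}_{a+1}^*(Y^{(a)}) - \mc{M}_{a+1}^*(X)|$. Inserting the bound from $(\mc{B})$ and integrating $X_0$ out gives $\E_{X_0}|U(X_0, X_j)| = U_{X_j}$ for each fixed $j \in [a-1]$ and $\E_{X_0} U_{X_0} = \|U\|_1$, collapsing exactly to the right-hand side of $(\mc{C})$.

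Overall, the argument is really bookkeeping rather than inequality hunting, and I expect no substantial obstacle. The two small points to stay alert to are tracking which absolute values have been integrated out and which remain at each stage, and handling the one-sided and two-sided cut norms uniformly---this works because the bound in $(\mc{A})$ arises from a supremum of a linear functional, so it accommodates both $\sup_{S,T}\mc{U}(S,T)$ and $\sup_{S,T}|\mc{U}(S,T)|$ in the same way. The fact that the sums consistently avoid the index $j=a$ is a direct consequence of the stipulation that $\mc{U}_X$ take value $0$ on the diagonal, and must be preserved throughout.
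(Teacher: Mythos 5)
Your proposal is correct and follows essentially the same route as the paper: bound the entrywise change when $X_a$ is swapped for $X_0$ (only row and column $a$ are affected, with the diagonal held at zero), transfer this to the cut norms via $|\sup f - \sup g|\leq\sup|f-g|$, then integrate out the appropriate coordinates using the triangle inequality for expectations, with $\E_{X_j}|U(X_a,X_j)|=U_{X_a}$ and $\E_{X_0}|U(X_0,X_j)|=U_{X_j}$, $\E_{X_0}U_{X_0}=\|U\|_1$. The identity $\mc{M}_a^*(X)=\E_{X_0}\big[\mc{M}_{a+1}^*(Y^{(a)})\,\big|\,X_{[a-1]}\big]$ you use in part $(\mc{C})$ is exactly the paper's substitution $\E_{X_{[k]\setminus[a-1]}}\|\mc{U}_X\|_\pcut^*=\E_{X_0,X_{[k]\setminus[a]}}\|\mc{U}_{Y^{(a)}}\|_\pcut^*$, just phrased in conditional-expectation form.
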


\begin{proof} 
Apart from a single step in the proof of part $(\mc{A})$, the proofs for $\mc{M}$ and the cut norm are identical to the proofs for $\mc{M}^+$ and the corresponding $\|\cdot\|^+$.

\underline{Part $(\mc{A})$:}
For any \(S,T\subseteq [k]\) the function \(\mc{U}_X\left(S,T\right)\) is a function of the independent random variables \(X_\ell, \ell \in [k]\), and if we change the value of one of these \(X_a\) to \(X_0\), the sum \(\mc{U}_X(S,T)\) changes to \(\mc{U}_{Y^{(a)}}(S,T)\). Note that
\begin{equation*}
    \begin{split}
       & \left|\left|\mc{U}_{X}(S, T)\right|-\left|\mc{U}_{Y^{(a)}}(S,T)\right| \Big.\right|
        \leq  \left| \mc{U}_{X}(S, T) - \mc{U}_{Y^{(a)}}(S,T) \Big.\right| 
        = \left|\sum_{i\in S, j\in T} \left( U(X_i,X_j)-U(Y^{(a)}_i, Y^{(a)}_j)\right)\right|\\
     \leq & \sum_{i\in S, j\in T} \left|\Big.U(X_i,X_j)-U(Y^{(a)}_i, Y^{(a)}_j)\right|
        \leq  2 \sum_{j\in[k]\setminus \{a\}} \big| U(X_a,X_j) \big| + 2\sum_{j\in[k]\setminus \{a\}} \big|U(X_0,X_j)\big|.
    \end{split}
\end{equation*}
For the last inequality, note that the terms with $i,j\neq a$ cancel out, and if for example \(a \in S\) and  \(a \notin T\) then the sum changes by at most
    \begin{equation*}
        \begin{split}
            \sum_{ j\in T} \left|\Big.U(X_0,X_j)-U(X_a,X_j) \right| & \leq \sum_{ j\in T} \left(\left|U(X_0,X_j)\right|+ \left|U(X_a,X_j)\right|\right) \\
            & \leq \sum_{ j\in[k]\setminus \{a\}} \left|U(X_0,X_j)\right|+ \sum_{j\in[k]\setminus \{a\}} \left|U(X_a,X_j) \right|,
        \end{split}
    \end{equation*}
    with the other cases for $a$ working in a similar fashion.

Hence, recalling Lemma \ref{le:step}, we have
\begin{equation*}
    \begin{split}
       & \left| k^2 \cdot \|\mc{U}_X\|_{\pcut}-k^2 \cdot \|\mc{U}_{Y^{(a)}}\|_{\pcut} \right|= \left|\max_{S,T \subseteq[k]} \left|\mc{U}_X(S,T)\right| - \max_{S,T \subseteq[k]} \left|\mc{U}_{Y^{(a)}}(S,T)\right|\right| \\
       & \quad \quad \quad \leq 2 \sum_{ j\in[k]\setminus \{a\}} \left|U(X_0,X_j)\right|+ 2 \sum_{ j\in[k]\setminus \{a\}} \left|U(X_a,X_j) \right|,
    \end{split}
\end{equation*}
but also
\begin{equation*}
    \begin{split}
       & \left| k^2 \cdot \|\mc{U}_X\|_{\pcut}^+-k^2 \cdot \|\mc{U}_{Y^{(a)}}\|^+_{\pcut} \right|= \left|\max_{S,T \subseteq[k]} \mc{U}_X(S,T) - \max_{S,T \subseteq[k]} \mc{U}_{Y^{(a)}}(S,T)\right| \\
       & \quad \quad \quad \leq 2 \sum_{ j\in[k]\setminus \{a\}} \left|U(X_0,X_j)\right|+ 2 \sum_{ j\in[k]\setminus \{a\}} \left|U(X_a,X_j) \right|,
    \end{split}
\end{equation*}
proving inequality $(\mc{A})$.

\underline{Part $(\mc{B})$:}
For the second inequality, we are fixing \(X_1,X_2, \dots X_{a}\), and taking the expected value over $X_{a+1},\ldots,X_k$.

\begin{equation*}
    \begin{split}
        & \left|\mc{M}_{a+1}^*(X)-\mc{M}_{a+1}^*(Y^{(a)})\right|=  \left|\E_{X_{[k] \setminus [a]}
        }
        \left[ \|\mc{U}_X\|_\pcut^* - \|\mc{U}_{Y^{(a)}}\|_\pcut^*\right]\right|  \leq \E_{X_{[k] \setminus [a]}
        }\left| \|\mc{U}_X\|_\pcut^* - \|\mc{U}_{Y^{(a)}}\|_\pcut^*\right| \\
        & \stackrel{(\mc{A})}{\leq} \frac{2}{k^2} \E_{X_{[k] \setminus [a]}
        } \left[\sum_{j\in[k]\setminus \{a\}} \left(\big|U(X_0,X_j)\big|+ \big|U(X_a,X_j) \big|\right)\right]\\
        &  = \frac{2}{k^2} \left[\sum_{j\in [a-1]} \left(\big|U(X_0,X_j)\big|+ \big|U(X_a,X_j) \big|\right) +\E_{X_{[k] \setminus [a]}
        } \left[\sum_{ j \in [k] \setminus [a]} \left(\big|U(X_0,X_j)\big|+ \big|U(X_a,X_j) \big|\right)\right] \right] \\
        & = \frac{2}{k^2} \left[\sum_{j\in [a-1]} \left(\big|U(X_0,X_j)\big|+ \big|U(X_a,X_j) \big|\right) + \left|\{j\in[k] \setminus [a]\}\right| \cdot \left(U_{X_0}+ U_{X_a}\right)\bigg.\right] \\
        & = \frac{2}{k^2} \left[\sum_{j\in [a-1]} \left(\big|U(X_0,X_j)\big|+ \big|U(X_a,X_j) \big|\right) + (k-a) \cdot \left(U_{X_0}+ U_{X_a}\right)\right].
    \end{split}
\end{equation*}

\underline{Part $(\mc{C})$:}
Finally, we look at the martingale difference:
\begin{equation*}
    \begin{split}
    &\left|\mc{M}_a^*(X)-\mc{M}_{a+1}^*(X)\right|=
    \left|\Big.\E_{X_{[k] \setminus [a-1]}         
    } \|\mc{U}_X\|_\pcut^* - \E_{X_{[k] \setminus [a]}
    } \|\mc{U}_{X}\|_\pcut^* \right|
    \\
    & =\left|\Big.\E_{X_0,X_{[k] \setminus [a]}
    } \|\mc{U}_{Y^{(a)}}\|_\pcut^* - \E_{X_{[k] \setminus [a]}
    }\|\mc{U}_{X}\|_\pcut^* \right|
    \\
    & =\left| \E_{X_0} \left[\Big.\E_{X_{[k] \setminus [a]}
    }\left[ \|\mc{U}_{Y^{(a)}}\|_\pcut^* - \|\mc{U}_{X}\|_\pcut^* \right]\right]\right| 
    \leq \E_{X_0} \left|\Big.\E_{X_{[k] \setminus [a]}
    }\left[ \|\mc{U}_{Y^{(a)}}\|_\pcut^* - \|\mc{U}_{X}\|_\pcut^*\right]\right|
    \\
    & \stackrel{(\mc{B})}{\leq} \frac{2}{k^2} \E_{X_0}\left[\sum_{j\in [a-1]} \left(\big|U(X_0,X_j)\big|+ \big|U(X_a,X_j) \big|\right) + (k-a) \cdot \left(U_{X_0}+ U_{X_a}\right)\right]
    \\
    & = \frac{2}{k^2} \left[\sum_{j\in [a-1]} \big|U(X_a,X_j)\big|+ \E_{X_0}\left[\sum_{j\in [a-1]}\big|U(X_0,X_j) \big|\right] + (k-a) \cdot U_{X_a}+ (k-a) \cdot \E_{X_0}\left[U_{X_0}\right]\right]
    \\
    & =\frac{2}{k^2} \left[\sum_{j\in [a-1]} \big|U(X_a,X_j)\big|+ \sum_{j\in [a-1]} U_{X_j} + (k-a) \cdot U_{X_a}+ (k-a) \cdot \|U\|_1\right].
    \end{split}
\end{equation*}
\end{proof}

This now allows us to bound the size of the sets where the martingale difference is large, and apply Lemma \ref{Lemma:generalised_azuma}.

\begin{lemma}\label{le:Azuma_lower}
Let $U\in L_{sym}^1([0,1]^2)$ be a kernel and $k\geq 1$ an integer. Let further $*\in\left\{\text{ },+\right\}$, and
\[
H_{\alpha,a}^*:= \left\{X\in [0,1]^k \quad \bigg| \quad \left|\mc{M}^*_a(X)-\mc{M}^*_{a+1}(X)\right| \geq \alpha\right\}
\]
for $a\in [k]$ and $\alpha>0$.
Setting $\alpha_0:=\frac{6}{k}\|U\|_1\left(1+\vb k^\va\right)$ for some $\vb,\va>0$, we then have 
\[
\bigcup_{a\in[k]} H_{\alpha_0,a}^* \subseteq (L^0_{\vb,\va})^c,
\]
where $(L^0_{\vb,\va})^c$ denotes the complement of $L^0_{\vb,\va}$.
Consequently, for any $p>\frac1\va$, $U\in L_{sym}^p([0,1]^2)$, $\lambda>0$, integer $k\geq 1$, and $X=(X_1, X_2, \dots, X_k) \in [0,1]^k$ uniform random $k$-vector, we have
\begin{equation*}
        \mathds{P}_{X} \left(\left|\Big.\|\mc{U}_X\|_\pcut^* - \E_{X}\left[ \|\mc{U}_X\|_\pcut^*\right]\right| \geq 2 \lambda \alpha_0\sqrt{k}\right)
        \leq 2e^{-2 \lambda^2} + 2k \min\left\{1, \frac{2^p\|U\|_p^p}{\vb^p k^{\va p}\|U\|_1^p}\right\}.
\end{equation*}
\end{lemma}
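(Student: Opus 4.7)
The plan is to prove the lemma in two successive stages. First I would verify the deterministic set inclusion $\bigcup_{a\in[k]} H_{\alpha,a}^* \subseteq (L^0_{\vb,\va})^c$ by combining the martingale-difference bound of Lemma \ref{le:martingale_diff}($\mc{C}$) with the definition of $L^0_{\vb,\va}$. Then I would feed this inclusion into the generalised Azuma inequality (Lemma \ref{Lemma:generalised_azuma}), together with the $L^p$-tail estimate for $L^0_{\vb,\va}$ provided by Proposition \ref{Prop:L_Chebyshev}, to obtain the claimed concentration inequality.

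For the first stage, suppose $X\in L^0_{\vb,\va}$. By Definition \ref{def:L}, each section integral $U_{X_j}$ is bounded by $(1+\vb k^\va)\|U\|_1$, and for each fixed $a\in[k]$ the average $\tfrac{1}{k-1}\sum_{i\neq a}|U(X_a,X_i)|$ is bounded by the same quantity, so $\sum_{j\in[k]\setminus\{a\}}|U(X_a,X_j)|\leq (k-1)(1+\vb k^\va)\|U\|_1$. I would then estimate the four terms inside the bracket of Lemma \ref{le:martingale_diff}($\mc{C}$) separately: extending $\sum_{j\in[a-1]}|U(X_a,X_j)|$ to $j\in[k]\setminus\{a\}$ bounds it by $(k-1)(1+\vb k^\va)\|U\|_1$; the second and third terms are bounded by $(a-1)(1+\vb k^\va)\|U\|_1$ and $(k-a)(1+\vb k^\va)\|U\|_1$ respectively; and the last one trivially by $k(1+\vb k^\va)\|U\|_1$. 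Summing these contributions gives at most $(3k-2)(1+\vb k^\va)\|U\|_1\leq 3k(1+\vb k^\va)\|U\|_1$, and multiplying by $2/k^2$ yields exactly $\alpha$. Contrapositively, any $X$ lying in $H_{\alpha,a}^*$ for some $a$ must lie outside $L^0_{\vb,\va}$, which is the desired inclusion.

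For the second stage, I would apply Lemma \ref{Lemma:generalised_azuma} with $\Omega=[0,1]$ (under Lebesgue measure), $\ell=1$ and $f_1(X)=\|\mc{U}_X\|_\pcut^*$, and with $\alpha_a=\alpha$ uniformly in $a$. The conditional expectations appearing in Lemma \ref{Lemma:generalised_azuma} are precisely the martingale values $\mc{M}_a^*$ from Definition \ref{def:martingale}, so the first-stage inclusion furnishes the required containment $\bigcup_a H_{\alpha_a,a}\subseteq (L^0_{\vb,\va})^c$. Taking $\mu=\lambda$ gives $\sqrt{\sum_a\alpha_a^2}=\alpha\sqrt{k}$, and Proposition \ref{Prop:L_Chebyshev} bounds $\mathds{P}_X\bigl((L^0_{\vb,\va})^c\bigr)$ by $2k\min\{1,2^p\|U\|_p^p/(\vb^p k^{\va p}\|U\|_1^p)\}$, yielding the stated inequality. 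I do not anticipate a genuine obstacle: the only real care needed is in the bookkeeping in the first stage, so that the four contributions combine precisely to match the constant $6$ in the definition of $\alpha$.
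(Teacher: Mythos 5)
Your proposal is correct and follows essentially the same route as the paper: you use Lemma \ref{le:martingale_diff}($\mc{C}$) together with the defining bounds of $L^0_{\vb,\va}$ to show the martingale differences stay strictly below $\alpha$ on $L^0_{\vb,\va}$, giving the set inclusion, and then feed this into Lemma \ref{Lemma:generalised_azuma} with $\ell=1$, $\mu=\lambda$, $\alpha_a\equiv\alpha$ and Proposition \ref{Prop:L_Chebyshev}. The only cosmetic difference is that your bound on the last term $(k-a)\|U\|_1$ is slightly more generous than the paper's, but since the bracket sums to $(3k-2)(1+\vb k^\va)\|U\|_1 < 3k(1+\vb k^\va)\|U\|_1$, the inequality is in fact strict and the inclusion holds as required.
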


\begin{proof}
By Lemma \ref{le:martingale_diff} Part $(\mc{C})$, we have for any $X\in L_{\vb,\va}^0$ that
\begin{equation*}
    \begin{split}
       & \left|\mc{M}^*_a(X)-\mc{M}^*_{a+1}(X)\right|\leq \frac{2}{k^2} \left[\sum_{j \in [a-1]} \big|U(X_a,X_j)\big|+ \sum_{j \in [a-1]} U_{X_j} + (k-a) \cdot U_{X_a}+ (k-a) \cdot \|U\|_1\right]\\
       \leq & \frac{2}{k^2} \left[\sum_{j\in[k]\setminus\{a\}} \big|U(X_a,X_j)\big|+ \sum_{j \in [a-1]} U_{X_j} + (k-a) \cdot U_{X_a}+ (k-a) \cdot \|U\|_1\right]\\
         \leq &\frac{2}{k^2} \left[(k-1) \|U\|_1 \left(1+ \vb k^\va\right)+ (a-1) \|U\|_1 \left(1+ \vb k^\va\right) + (k-a) \cdot \|U\|_1\left(1 + \vb k^\va\right) + (k-a) \cdot \|U\|_1\big.\right]\\
      =& \frac{2}{k^2} \left[\big. (3k-a-2) \|U\|_1 + (2k-2) \vb k^\va \|U\|_1\right] < \frac{6k}{k^2} \|U\|_1\left(1 + \vb k^\va\right) =\alpha_0.
    \end{split}
\end{equation*}

Note that \(\alpha_0\) depends only on \(k, \vb, \va\) (and not on \(a\)).
By applying Lemma \ref{Lemma:generalised_azuma} to the function $f(X):=\|\mc{U}_X\|^*_\pcut: [0,1]^k\to\mathbb{R}$ and using Proposition \ref{Prop:L_Chebyshev}, we then obtain

\begin{equation*}
    \begin{split}
  \mathds{P}_{X} \left(\left|\Big.\|\mc{U}_X\|^*_\pcut - \E_X\left[ \|\mc{U}_X\|^*_\pcut\right]\right|  \geq 2 \lambda \sqrt{k\cdot  \alpha_0^2}\right)
       & \leq 2 e^{-2 \lambda^2} + \mathds{P}_{X} \left(X\in \cup_{a\in[k]}H^*_{\alpha_0,a}\right) \\
       & \leq 2 e^{-2 \lambda^2} + \mathds{P}_{X} \left(X\in (L_{\vb,\va}^0)^c\right) \\
       & \leq 2 e^{-2 \lambda^2} + 2k \cdot \min\left\{1, \frac{2^p\|U\|_p^p }{\vb^p k^{\va p}\|U\|_1^p}\right\}.
    \end{split}
\end{equation*}
\end{proof}

%
%
\section{Lower bound on the ``systematic error'' term}\label{sect:lower}
%
%

Bounding $\E[\|\mc{U}_X\|_\pcut] - \|U\|_\pcut$ from below is rather straight forward, and both the bound and its proof carry over from the bounded kernel case.

\begin{lemma}\label{le:lower_expect}
For any kernel $U \in L^1_{sym}([0,1]^2)$, $k\geq 1$ integer and $X=(X_1, X_2, \dots, X_k) \in [0,1]^k$ uniform random $k$-vector, we have
\begin{equation*}
    \E_X\left[ \|\mc{U}_X\|_\pcut \right] -\|U\|_\pcut \geq  - \frac{1}{k}\|U\|_\pcut.
\end{equation*}
\end{lemma}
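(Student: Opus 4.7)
The plan is to exploit the variational definition of the cut norm by feeding into $\mc{U}_X$ the index sets picked out by a near-optimal partition of $[0,1]$ for $U$. Fix $\varepsilon>0$ and choose measurable $S^*,T^*\subseteq[0,1]$ with $\bigl|\int_{S^*\times T^*}U\bigr|\geq \|U\|_\pcut-\varepsilon$. Then define the (random) index sets
\[
S:=\{i\in[k]:X_i\in S^*\}, \qquad T:=\{j\in[k]:X_j\in T^*\}.
\]
These are measurable functions of $X$, and by Lemma \ref{le:step} every realisation satisfies
\[
\|\mc{U}_X\|_\pcut\geq \frac{1}{k^2}\bigl|\mc{U}_X(S,T)\bigr|.
\]

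Taking expectations and applying Jensen's inequality to the absolute value,
\[
\E_X\bigl[\|\mc{U}_X\|_\pcut\bigr]\geq \frac{1}{k^2}\E_X\bigl|\mc{U}_X(S,T)\bigr|\geq \frac{1}{k^2}\bigl|\E_X[\mc{U}_X(S,T)]\bigr|.
\]
Because $\mc{U}_X$ vanishes on the diagonal by construction,
\[
\mc{U}_X(S,T)=\sum_{i\neq j}\mathds{1}_{X_i\in S^*}\,\mathds{1}_{X_j\in T^*}\,U(X_i,X_j),
\]
and independence of the $X_\ell$ together with symmetry gives $\E_X[\mc{U}_X(S,T)]=k(k-1)\int_{S^*\times T^*}U$. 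Combining the inequalities,
\[
\E_X\bigl[\|\mc{U}_X\|_\pcut\bigr]\geq \frac{k-1}{k}\left|\int_{S^*\times T^*}U\right|\geq \frac{k-1}{k}\bigl(\|U\|_\pcut-\varepsilon\bigr),
\]
and letting $\varepsilon\to 0$ yields $\E_X[\|\mc{U}_X\|_\pcut]\geq \tfrac{k-1}{k}\|U\|_\pcut$, which rearranges to the claimed lower bound.

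There is no genuine obstacle here: the only subtlety worth highlighting is that the $-\|U\|_\pcut/k$ loss is a pure normalisation artefact caused by the convention that $\mc{U}_X$ is set to $0$ on the diagonal. Of the $k^2$ entries of the $k\times k$ stepfunction, only the $k(k-1)$ off-diagonal ones carry sampled values of $U$, which accounts exactly for the factor $(k-1)/k$ in the computation above, and hence matches the bound obtained in the bounded kernel setting.
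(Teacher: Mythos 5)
Your proof is correct and takes essentially the same approach as the paper: both push the measurable sets realising (near-)optimality of $\|U\|_\pcut$ through to the induced random index sets, invoke Lemma \ref{le:step}, apply Jensen's inequality for the absolute value, and compute $\E_X[\mc{U}_X(S,T)]=k(k-1)\int_{S^*\times T^*}U$ using independence and the vanishing diagonal. The paper takes a supremum over all measurable $S_1,S_2$ at the end rather than fixing near-optimal sets and letting $\varepsilon\to 0$, but this is a purely cosmetic difference.
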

\begin{proof} Let \(S_1, S_2 \subset [0, 1]\) be two measurable sets, and define the induced random finite sets 
\[
\mc{S}_i:=\left\{j\in[k]: X_j\in S_i\right\} \quad i=1,2.
\]
By Lemma \ref{le:step} we then have
\[\|\mc{U}_X\|_\pcut \geq \frac{1}{k^2} \left|\mc{U}_X(\mc{S}_1, \mc{S}_2)\right|.\]
Fixing \(S_1,S_2\) and taking the expectation over the k-vector \(X\), we obtain
\begin{equation*}
    \begin{split}
        \E_X\left[ \|\mc{U}_X\|_\pcut\right] \geq & \frac{1}{k^2} \E_X \left|\Big. \mc{U}_X(\mc{S}_1, \mc{S}_2) \right| \geq \frac{1}{k^2} \left|\Big.\E_X\left[\mc{U}_X(\mc{S}_1, \mc{S}_2)\right] \right| \\
        = & \frac{k - 1}{k} \left| \int_{S_1 \times S_2} U(x, y) dx dy \right|,
    \end{split}
\end{equation*}
as $\mc{U}[X]$ is 0 on the main diagonal by definition.

Taking the supremum over all measurable sets \(S_1, S_2\), this leads to
\begin{equation*}
    \E_X\left[ \|\mc{U}_X\|_\pcut \right] \geq \frac{k-1}{k}\|U\|_\pcut,
\end{equation*}
and hence
\begin{equation}\label{eq:lower2}
    \E_X\left[ \|\mc{U}_X\|_\pcut \right] -\|U\|_\pcut \geq  - \frac{1}{k}\|U\|_\pcut.
\end{equation}
\end{proof}

\begin{remark}
Note that the lower bound in \eqref{eq:lower2} is of an order that will turn out to be negligible compared to the bound on the dispersion term, and thus may seem of little importance. However,  the true inequality here is actually the equivalent
\begin{equation}\label{eq:lower1}
    \E_X\left[ \frac{k}{k-1} \|\mc{U}_X\|_\pcut \right] \geq\|U\|_\pcut,
\end{equation}
which corresponds to a different normalisation of the cut norm for the sample $\mc{U}_X$. Indeed, just as for simple graphs, samples from graphons result in $k\times k$ matrices that have diagonal entries all $0$, and so one may argue that the ``correct'' normalisation for the cut norm would be a factor $1/k(k-1)$ instead of $1/k^2$: inequality \eqref{eq:lower1} is satisfied with equality for the constant $1$ graphon (corresponding to the complete graph case).
\end{remark}

%
%
\section{Upper bound on the ``systematic error'' term}\label{Section:upper_bound}
%
%
We shall present two approaches to bounding the "systematic error" $\E_X\left[ \|\mc{U}_X\|_\pcut \right] -\|U\|_\pcut$ from above, which in the bounded kernel case has been shown to be less than or equal to $\frac{8\|U\|_\infty}{k^{1/4}}$ (\cite[Inequality (4.15)]{Borgs}). Our main approach -- and somewhat surprisingly the one yielding better bounds, works by truncating $U$ at an appropriate, $k$ dependent level, and then bounding the various error terms that arise, using an intermediate step of the original bounded kernel First Sampling Lemma as a black box along the way. The second approach adapts the full original proof to the unbounded setting, and yields a slightly weaker bound of order $k^{-\frac{1}{4}+\frac{1}{2p}}(\ln k)^{1/4}$, as opposed to $k^{-\frac{1}{4}+\frac{1}{4p}}$ for our main approach. We chose to include the second approach for two reasons. On the one hand, there are intermediate results that in themselves are interesting, and fall outside of the path of the main approach. On the other hand, comparing these approaches leads to an interesting open question. Namely, the approach leading to a weaker bound being the one that closely follows the original proof may be an indication that this original approach is not sharp even in the bounded kernel case, and that there may be room for improvements, which would in turn impact the unbounded kernel case as well.

As the second approach yields a slightly weaker bound, we shall only provide the interesting intermediate results and a skeleton proof here, with details relegated to the \nameref{sect:Appendix}.

\subsection{Main approach: truncation}\label{sect:truncation}

Let us follow a direct truncation approach to the problem. Namely, we truncate the unbounded kernel $U\in L^1_{sym}([0,1])$ at a certain threshold, turning it into a bounded kernel to which we can apply intermediate results from the original First Sampling Lemma. To finish, we then bound the various error terms arising from the truncation.

For each $k$, let $U^*_k$ denote a truncated version of $U$.
For each $k\in\mathbb{N}$ we have a threshold $f(k)>0$ with $\lim_{k\to\infty} f(k)=\infty$ such that $U_k^*(x,y)=U(x,y)$ as long as $|U(x,y)|\leq f(k)$, otherwise $|U_k^*(x,y)|=f(k)$ and $U_{k}^*$ retains the same sign as $U$ at every point.

Then we have the following.
\begin{prop}\label{prop:upper_expect_truncation}
    Let $k\geq 2$ be an integer and $p\geq 2$ . Then for any kernel $U\in L^p_{sym}([0,1]^2)$, we have
\begin{align*}
\E_X(\|\mc{U}_X\|_\pcut)-\|U\|_\pcut\leq 30\|U\|_pk^{-\frac14+\frac{1}{4p}}.
\end{align*}
\end{prop}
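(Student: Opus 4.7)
The strategy is to use the truncation $U_k^*$ as a bridge to the bounded-kernel setting: apply the known bounded-kernel First Sampling Lemma of \cite{Borgs} to $U_k^*$ (which satisfies $\|U_k^*\|_\infty \leq f(k)$), and control the truncation error in the much coarser $L^1$ norm. Denote by $\mc{U}_X^*$ the $k$-sample stepfunction built from $U_k^*$ and $X$ (i.e., the analogue of $\mc{U}_X$ with $U$ replaced by $U_k^*$); crucially, $\mc{U}_X$ and $\mc{U}_X^*$ are coupled through the same random points $X_1,\ldots,X_k$. The truncation threshold $f(k)$ will be chosen optimally at the very end.

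The triangle inequality for $\|\cdot\|_\pcut$, combined with the elementary fact that $\|W\|_\pcut \leq \|W\|_1$ for any $W\in L^1([0,1]^2)$, yields
\begin{equation*}
\E_X\|\mc{U}_X\|_\pcut - \|U\|_\pcut \leq \bigl(\E_X\|\mc{U}_X^*\|_\pcut - \|U_k^*\|_\pcut\bigr) + \|U_k^* - U\|_1 + \E_X\|\mc{U}_X - \mc{U}_X^*\|_1.
\end{equation*}
The first summand is handled by \cite[Ineq.~(4.15)]{Borgs} applied to the bounded symmetric kernel $U_k^*$, giving at most $8 f(k)\cdot k^{-1/4}$. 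For the other two summands, the assumed properties of the truncation (same sign, $|U_k^*|\leq f(k)$, and $U = U_k^*$ on $\{|U|\leq f(k)\}$) force $|U - U_k^*| \leq |U|\mathbf{1}_{\{|U|>f(k)\}}$, so a Markov-type estimate gives
\begin{equation*}
\|U - U_k^*\|_1 \leq \int_{\{|U|>f(k)\}} |U|\,d\lambda^2 \leq \frac{1}{f(k)^{p-1}}\int|U|^p\,d\lambda^2 = \frac{\|U\|_p^p}{f(k)^{p-1}},
\end{equation*}
and a direct computation (passing the expectation through the sum over $i\neq j$) yields $\E_X\|\mc{U}_X - \mc{U}_X^*\|_1 = \tfrac{k-1}{k}\|U - U_k^*\|_1$. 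Combining everything gives the clean two-term inequality
\begin{equation*}
\E_X\|\mc{U}_X\|_\pcut - \|U\|_\pcut \leq \frac{8 f(k)}{k^{1/4}} + \frac{2\|U\|_p^p}{f(k)^{p-1}}.
\end{equation*}

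It remains to balance the two contributions by choosing $f(k)$. Setting $f(k) := \|U\|_p\, k^{1/(4p)}$ reduces each term to a multiple of $\|U\|_p\, k^{-1/4+1/(4p)}$, and a straightforward arithmetic check comfortably fits the sum under the stated constant $30$ (an optimal choice yields a noticeably smaller constant, but this is not needed). Structurally, there is no real obstacle once the bounded-kernel result is used as a black box; the argument is essentially bookkeeping. The only subtlety worth highlighting is that the single truncation level $f(k)$ must simultaneously serve two opposing purposes --- keeping $\|U_k^*\|_\infty$ small enough for the bounded result to be useful, and keeping the $L^1$ tail $\|U-U_k^*\|_1$ negligible --- which is exactly what the scaling $k^{1/(4p)}$ achieves.
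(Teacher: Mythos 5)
Your proof is correct and follows essentially the same strategy as the paper's: decompose via the truncation $U_k^*$ into a bounded-kernel term (handled by the Borgs et al.\ result), and two truncation-error terms controlled in $L^1$, then optimize $f(k)$. The one place you genuinely diverge is the tail estimate: you bound $\|U-U_k^*\|_1$ directly by the Markov-type inequality $\int_{\{|U|>f(k)\}}|U|\leq \|U\|_p^p/f(k)^{p-1}$, whereas the paper routes through a layer-cake integral and its generalized Chebyshev lemma, arriving at the weaker $2^p\|U\|_p^p f(k)/(f(k)-\|U\|_1)^p$ (which also requires the margin $f(k)>\|U\|_1$, forcing the paper's extra factor of $3$ in $f(k)$, and in fact the paper omits a factor $\tfrac{1}{p-1}$ from the $s$-integral that makes its stated bound problematic for $p$ near $1$). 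Your estimate is both simpler and quantitatively tighter; with $f(k)=\|U\|_p k^{1/(4p)}$ it delivers the constant $10$ rather than $30$, comfortably within the claimed bound, and it does so under no side condition on $f(k)$ beyond positivity. In short: same skeleton, but your handling of the truncation error is the cleaner one.
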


\begin{proof}
Let $\mc{U}_{k,X}^*$ denote the random $k$-sample obtained from $U_k^*$.
Then we have the following.
\begin{align*}
\E_X(\|\mc{U}_X\|_\pcut)-\|U\|_\pcut=&(\E_X(\|\mc{U}_X\|_\pcut)-\E_X(\|\mc{U}^*_{k,X}\|_\pcut))+(\E_X(\|\mc{U}^*_{k,X}\|_\pcut)-\|U_k^*\|_\pcut)+(\|U_k^*\|_\pcut-\|U\|_\pcut)\\
\leq&\E_X(\|\mc{U}_X\|_\pcut-\|\mc{U}^*_{k,X}\|_\pcut)+(\E_X(\|\mc{U}^*_{k,X}\|_\pcut-\|U_k^*\|_\pcut)+(\|U_k^*\|_\pcut-\|U\|_\pcut)
\end{align*}
The second term on the RHS is covered by the bounded kernel case, though the error bound has to be scaled with the $L^\infty$ norm of $U^*_k$ which is at most $f(k)$. By \cite[Inequality (4.15)]{Borgs} we then obtain
\[
\E_X(\|\mc{U}^*_{k,X}\|_\pcut-\|U_k^*\|_\pcut)\leq \frac{8f(k)}{k^{1/4}}.
\]
On the other hand, the triangle inequality yields
\begin{align*}
    \|\mc{U}_X\|_\pcut-\|\mc{U}^*_{k,X}\|_\pcut\leq \|\mc{U}_X-\mc{U}_{k,X}^*\|_\pcut\leq \|\mc{U}_X-\mc{U}_{k,X}^*\|_{_{1}}
\end{align*}
and
\begin{align*}
    \|U_k^*\|_\pcut-\|U\|_\pcut\leq\|U_k^*-U\|_\pcut\leq\|U_k^*-U\|_1,
\end{align*}
hence the first and third terms can be bounded as follows.
\begin{align*}
    \E_X(\|\mc{U}_X\|_\pcut-\|\mc{U}^*_{k,X}\|_\pcut)+(\|U_k^*\|_\pcut-\|U\|_\pcut)
    & \leq \E_X(\|\mc{U}_X-\mc{U}_{k,X}^*\|_{_{1}})+\|U_k^*-U\|_1 \\
    & =\left(\frac{k(k-1)}{k^2}+1\right)\|U_k^*-U\|_1.
\end{align*}

Hence it only remains to bound the effect of the truncation, which can be done using the $L^p$ norm. Indeed
\begin{align*}
\|U_k^*-U\|_1 & = \int_{|U|>f(k)} (|U|- f(k)) = \int_{0}^{\infty} \mathds{P} \left(|U|\geq t + f(k)\right)\,dt \\
& =\int_{f(k)}^\infty \mathds{P} \left(|U|\geq t'\right)\,dt'=\int_1^\infty f(k)\mathds{P} \left(|U|\geq sf(k)\right) \,ds.
\end{align*}
Applying Lemma \ref{Lemma:generalised_Chebyshev} we obtain for $s\geq1$
\begin{align*}
\mathds{P} \left(|U|\geq sf(k)\right)=& \mathds{P} \left(|U|-\|U\|_1\geq sf(k)-\|U\|_1\right)\leq
\mathds{P} \left(|U|-\|U\|_1\geq s(f(k)-\|U\|_1)\right)\\
\leq& \frac{2^p\|U\|_p^p}{s^p(f(k)-\|U\|_1)^p},
\end{align*}
leading to
\begin{align*}
\|U_k^*-U\|_1\leq& \int_1^\infty f(k)\mathds{P} \left(|U|\geq sf(k)\right) \,ds
\leq \int_1^\infty f(k)\frac{2^p\|U\|_p^p}{s^p(f(k)-\|U\|_1)^p}\,ds\\
\leq& 2^p\|U\|_p^p\frac{f(k)}{(f(k)-\|U\|_1)^p}.
\end{align*}
Hence
\[
\E_X(\|\mc{U}_X\|_\pcut)-\|U\|_\pcut\leq \frac{8f(k)}{k^{1/4}}+2^{p+1}\|U\|_p^p\frac{f(k)}{(f(k)-\|U\|_1)^p},
\]
and setting $f(k):=c k^{\va}$, the first term increases with $\va$, whereas the second decreases. The optimum balance is reached when $\va p=1/4$, and choosing $c=3\|U\|_p\geq3\|U\|_1$ finishes the proof, since $f(k)-\|U\|_1\geq 2\|U\|_pk^\va$.
\end{proof}

\subsection{``Original'' approach: Random $q$-subsamples}\label{subsect:original}
The main issue with proving a high probability upper bound is that the cut norm of a sample is a maximum over all possible subsets of rows and columns, and bounding from above the expectation of a maximum is significantly harder than a maximum of expectations. Even though the corresponding sums are not independent, and some are automatically too small due to the size of the subsets, we a priori still have to contend with $\Theta(4^k)$ choices, each with their own ``bad set'' where the corresponding sum is too far away from $\|U\|_\pcut$, and we cannot obtain good bounds via such a direct approach.

The upper bound on the ``systematic error'' $\E_X\left[ \|\mc{U}_X\|_\pcut \right] -\|U\|_\pcut$ will be achieved by first considering the one-sided cut norm $\|\cdot\|_\pcut^+$, and then introducing an extra layer of randomisation. Namely, we shall make use of a random subset $Q_1$ of the rows and a random subset $Q_2$ of the columns of $\mc{U}_X$ to bound the expectation of $\|\mc{U}_X\|_\pcut^+$, and then apply the generalised Azuma's inequality to bound the deviation from this expected value. The essence of this approach is that it allows us to strike a balance between the bound on the deviation from the expected value, and the size of the corresponding exceptional set.

Let $k\geq 1$ be an integer and $R_1,R_2\subseteq [k]$. We shall view $R_1$ as a set of indices of columns and $R_2$ as a set of indices of rows. Given $U\in L^1_{sym}([0,1]^2)$ and $Z\in[0,1]^k$, we denote by \(R^+_{1,Z}\) the set of columns \(j \in [k]\) for which \(\mc{U}_Z(R_1, \{j\}) > 0\), and by \(R^-_{1,Z}\) the set of columns \(j \in [k]\) for which \(\mc{U}_Z(R_1, \{j\}) < 0\). The sets of rows
\(R^+_{2,Z}, R^-_{2,Z}\) are defined analogously. Note that \(\mc{U}_Z(R_1, R^+_{1,Z}), \mc{U}_Z(R^+_{2,Z}, R_2) \geq 0\) by this definition. Since $k\|\mc{U}_X\|_2=\|((U(X_i,X_j)))_{i\neq j\in[k]}\|_F$ where $\|\cdot\|_F$ denotes the Frobenius norm of a matrix, we have the following.

\begin{lemma}(\cite[Lemma 3]{Alon})\label{Lemma:BS12} Let $k\geq1$ be an integer, $q\in[k]$, and \(R_1, R_2 \subseteq[k]\). Let further \(Q\) be a random \(q\)-subset of \([k]\) and $U\in L^2_{sym}([0,1]^2)$ a kernel. Then for a.e. \(X\in [0,1]^k\),
\[\mc{U}_X(R_1, R_2) \leq \E_{Q}\bigg[\mc{U}_X\left((Q \cap R_2)_X^+, R_2\right)\bigg] + \frac{k^2}{\sqrt{q}}\|\mc{U}_X\|_2.\]
\end{lemma}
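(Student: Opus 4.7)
The plan is to follow the sampling argument of Alon--de la Vega--Kannan--Karpinski \cite{Alon}. First I would observe that the right-hand side does not involve $R_1$, so since $\mc{U}_X(R_1, R_2)$ is maximised over choices of the row set $R_1$ precisely at $R_1 = R_{2,X}^+$ (by the very definition of that set), it suffices to establish the inequality with $R_{2,X}^+$ in place of $R_1$. The remaining task is to show that the random ``proxy'' row set $(Q \cap R_2)_X^+$ yields, in expectation, essentially the same contribution as the optimal deterministic choice $R_{2,X}^+$.

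I would then parametrise everything by row sums: for each $i \in [k]$, put
\[
f_i := \sum_{j \in R_2,\,j\neq i} U(X_i,X_j), \qquad g_i(Q) := \sum_{j \in Q \cap R_2,\,j\neq i} U(X_i,X_j),
\]
so that $\mc{U}_X(R_{2,X}^+, R_2) = \sum_i \max(f_i, 0)$ while $\mc{U}_X((Q \cap R_2)_X^+, R_2) = \sum_i f_i \mathbf{1}_{g_i(Q) > 0}$. Since $Q$ is a uniform $q$-subset of $[k]$, the scaled statistic $\tfrac{k}{q} g_i(Q)$ has mean $f_i$. The key elementary observation is that whenever $f_i$ and $\tfrac{k}{q}g_i(Q)$ have opposite signs, the distance $|f_i - \tfrac{k}{q} g_i(Q)|$ is automatically at least $|f_i|$. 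The per-row difference between $\max(f_i,0)$ and $f_i\mathbf{1}_{g_i(Q)>0}$, which equals $|f_i|$ times the indicator of sign-disagreement, is therefore dominated by $|f_i - \tfrac{k}{q} g_i(Q)|$.

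Summing over $i$, applying Cauchy--Schwarz to pull out a factor $\sqrt{k}$, then taking expectation in $Q$ and using Jensen's inequality to move the square root outside the expectation, reduces the task to bounding $\sum_i \mathrm{Var}_Q(\tfrac{k}{q} g_i(Q))$. The last step is the variance estimate: for any real sequence $(a_j)_{j \in [k]}$, the standard formula for sampling $q$ items without replacement gives $\mathrm{Var}(\tfrac{k}{q}\sum_{j \in Q} a_j) \leq \tfrac{k}{q}\sum_j a_j^2$. Applying this with $a_j = U(X_i,X_j)\mathbf{1}_{j \in R_2,\,j \neq i}$ and summing over $i$ yields, via the Frobenius identity $k^2 \|\mc{U}_X\|_2^2 = \sum_{i \neq j} U(X_i, X_j)^2$ highlighted just before the lemma, a global bound of $\tfrac{k^3}{q}\|\mc{U}_X\|_2^2$. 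Taking the square root delivers precisely the $\tfrac{k^2}{\sqrt{q}}\|\mc{U}_X\|_2$ error term.

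The only delicate point is getting the constant right in the without-replacement variance computation, which I would sidestep by noting that without-replacement variance is dominated by its with-replacement counterpart $q \cdot \tfrac{1}{k}\sum_j a_j^2$ after rescaling. Everything else is a routine chain of indicator-function arithmetic, Cauchy--Schwarz, and Jensen's inequality; in particular the $L^2$ hypothesis on $U$ enters only through the Frobenius identity, and no use is made of the symmetry of $U$.
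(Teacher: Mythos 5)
Your proof is correct and is essentially the argument of \cite[Lemma 3]{Alon}, which the paper cites rather than re-proves. The key identities check out: $\E_Q[\tfrac{k}{q}g_i(Q)] = f_i$ because each $j\in[k]$ lies in a uniform $q$-subset $Q$ with probability $q/k$; the case analysis showing that $\max(f_i,0) - f_i\mathbf{1}_{g_i(Q)>0} = |f_i|\cdot\mathbf{1}_{\text{sign-disagreement}} \leq |f_i - \tfrac{k}{q}g_i(Q)|$ is clean; Cauchy--Schwarz plus Jensen reduces matters to $\sum_i \mathrm{Var}_Q(\tfrac{k}{q}g_i(Q))$; the without-replacement variance formula $\mathrm{Var}(\sum_{j\in Q}a_j) = q\tfrac{k-q}{k-1}\sigma^2 \leq \tfrac{q}{k}\sum_j a_j^2$ gives the bound $\tfrac{k}{q}\sum_j a_j^2$ after rescaling; and summing over $i$ and applying the Frobenius identity $k^2\|\mc{U}_X\|_2^2 = \sum_{i\neq j}U(X_i,X_j)^2$ yields exactly the claimed error term $\tfrac{k^2}{\sqrt{q}}\|\mc{U}_X\|_2$. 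Your remark that symmetry of $U$ is not used is also accurate and consistent with the lemma's origin as a general matrix/array statement.
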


The following lemma gives an upper bound on the one-sided cut norm, using the sampling procedure from Lemma~\ref{Lemma:BS12} (i.e. by maximizing only over certain rectangles, at the cost of averaging these estimates). The main point for our purposes will be that (for a fixed \(Q_1\) and \(Q_2\)), the number of index subset pairs to consider is only \(4^q\), as opposed to \(4^k\) in the definition of the cut norm.

\begin{lemma}\label{Lemma:Bplus_upper} Let $U\in L^2_{sym}([0,1]^2)$ be a kernel, $k\geq1$ an integer, $q\in[k]$, and \(Q_1\), \(Q_2\) two independent uniform random \(q\)-subsets of $[k]$. Then for a.e. \(X\in [0,1]^k\)
\begin{equation*}
    \begin{split}
        \|\mc{U}_X\|^+_\pcut \leq & \frac{1}{k^2} \E_{Q_1,Q_2} \left[\max_{\substack{R_1\subseteq Q_1, \\ R_2\subseteq Q_2}} \mc{U}_X(R_{2,X}^+,R_{1,X}^+)\right]+\frac{2}{\sqrt{q}} \|\mc{U}_X\|_2.
    \end{split}
\end{equation*}
\end{lemma}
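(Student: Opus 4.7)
My plan is to apply Lemma~\ref{Lemma:BS12} twice in succession, once using $Q_2$ to sample over the ``column'' index set and once using $Q_1$ to sample over the ``row'' index set, using the symmetry of $U$ (which gives $\mc{U}_X(A,B)=\mc{U}_X(B,A)$ for all $A,B\subseteq[k]$) to freely swap rows and columns between the two invocations. Starting from a pair $(S^*,T^*)\subseteq[k]^2$ attaining $k^2\|\mc{U}_X\|_\pcut^+=\mc{U}_X(S^*,T^*)$, each application of the black-box lemma contributes one copy of the $\frac{k^2}{\sqrt{q}}\|\mc{U}_X\|_2$ sampling error, which produces the factor $2$ in the stated bound.

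For the first step I would invoke Lemma~\ref{Lemma:BS12} with $R_1=S^*$, $R_2=T^*$, $Q=Q_2$, which eliminates $S^*$ from the upper bound and leaves
\[
\mc{U}_X(S^*,T^*)\leq \E_{Q_2}\bigl[\mc{U}_X(R_{2,X}^+,T^*)\bigr]+\frac{k^2}{\sqrt{q}}\|\mc{U}_X\|_2,
\]
where $R_2:=Q_2\cap T^*\subseteq Q_2$. For the second step, inside the expectation I would use symmetry to rewrite $\mc{U}_X(R_{2,X}^+,T^*)=\mc{U}_X(T^*,R_{2,X}^+)$ and apply Lemma~\ref{Lemma:BS12} once more with $R_1=T^*$, $R_2=R_{2,X}^+$, $Q=Q_1$. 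This produces the set $R_1:=Q_1\cap R_{2,X}^+\subseteq Q_1$ and turns the integrand into $\mc{U}_X(R_{1,X}^+,R_{2,X}^+)$, which by symmetry equals $\mc{U}_X(R_{2,X}^+,R_{1,X}^+)$. Since the pair $(R_1,R_2)$ just constructed is a particular element of the set over which the max in the claim runs, this quantity is dominated by $\max_{R_i\subseteq Q_i}\mc{U}_X(R_{2,X}^+,R_{1,X}^+)$, and dividing by $k^2$ yields the stated inequality.

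The main thing to check carefully is that the second invocation of Lemma~\ref{Lemma:BS12} is legitimate despite occurring inside the $Q_2$-expectation with an $R_2=R_{2,X}^+$ that itself depends on $Q_2$. Since the conclusion of Lemma~\ref{Lemma:BS12} holds pointwise in its $R_1,R_2$ with a uniform error term, one may simply apply it conditionally on $Q_2$ and then integrate, so no additional probabilistic input is needed. Beyond this small bookkeeping point, the remaining ingredients \emph{-- the two symmetrization steps and the trivial upper bound of a specific value by the maximum --} are routine.
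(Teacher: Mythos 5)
Your proof is correct and matches the paper's approach exactly: the paper's own ``proof'' of this lemma is the one-liner ``The proof follows from an iterated application of Lemma~\ref{Lemma:BS12},'' and the two-step iteration you spell out -- applying Lemma~\ref{Lemma:BS12} first with $Q_2$, symmetrizing, applying again with $Q_1$, and then bounding the resulting specific $(R_1,R_2)$ by the max -- is precisely what that sketch intends. Your bookkeeping remark about applying the second invocation conditionally on $Q_2$ is the right thing to observe and is indeed unproblematic since the error term in Lemma~\ref{Lemma:BS12} is uniform in $R_1,R_2$.
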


The proof follows from an iterated application of Lemma~\ref{Lemma:BS12}.\\
Now note that by definition, $-\mc{U}_X(R_{2,X}^-,R_{1,X}^-)=(-\mc{U})_X(R_{2,X}^+,R_{1,X}^+)$, since positive sums for $-U$ correspond exactly to negative sums for $U$. Hence applying the previous lemma to $-U$, we obtain that also for a.e. \(X\in [0,1]^k\)
\begin{equation*}
    \begin{split}
        \|-\mc{U}_X\|^+_\pcut \leq & \frac{1}{k^2} \E_{Q_1,Q_2} \left[\max_{\substack{R_1\subseteq Q_1, \\ R_2\subseteq Q_2}} -\mc{U}_X(R_{2,X}^-,R_{1,X}^-)\right]+\frac{2}{\sqrt{q}} \|\mc{U}_X\|_2.
    \end{split}
\end{equation*}

If we in addition take into consideration that the expectation of a maximum is at least the maximum of the corresponding expectations, we obtain the following.

\begin{cor}\label{cor:EB}
 Let $U\in L^2_{sym}([0,1]^2)$ be a kernel, $k\geq1$ an integer, $q\in[k]$, and \(Q_1\), \(Q_2\) two independent uniform random \(q\)-subsets of $[k]$. Let further $X=(X_1, X_2, \dots, X_k) \in [0,1]^k$ be a uniform random $k$-vector. Then
\begin{equation*}
    \begin{split}
        \|\mc{U}_X\|_\pcut \leq & \frac{1}{k^2} \E_{Q_1,Q_2\subseteq [k]} \left[\max\left\{\max_{\substack{R_1\subseteq Q_1, \\ R_2\subseteq Q_2}} \mc{U}_X(R_{2,X}^+,R_{1,X}^+),\max_{\substack{R_1\subseteq Q_1, \\ R_2\subseteq Q_2}}- \mc{U}_X(R_{2,X}^-,R_{1,X}^-)\right\}\right]+\frac{2}{\sqrt{q}} \|\mc{U}_X\|_2.
    \end{split}
\end{equation*}
\end{cor}

Taking the expectation with respect to $X$ on both sides in the above inequality, we reduce the problem to bounding the expectation of the RHS from above by $\|U\|_\pcut$ plus the error term.
We shall treat the expectations of the two summands on the RHS of the inequality in Corollary~\ref{cor:EB}
separately, with the first term being much trickier to handle.

First we deal with the expectation of the $L^2$ norm. Not too surprisingly, we are able to provide an upper bound in terms of the $L^2$ norm of the original kernel $U$.

\begin{lemma}\label{Lemma:Frobenius} Let $U\in L^2_{sym}([0,1]^2)$ be a kernel, $k\geq 1$ be an integer and $X=(X_1, X_2, \dots, X_k) \in [0,1]^k$  a uniform random $k$-vector. Then
\[
\E_X\left[\|\mc{U}_X\|_2\right]\leq \sqrt{\frac{k-1}{k}} \|U\|_2 \leq \|U\|_2.
\]
\end{lemma}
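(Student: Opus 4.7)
The proof is essentially a direct calculation followed by one application of Jensen's inequality, so I will keep it short.

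First I would expand $\|\mc{U}_X\|_2^2$ using the fact that $\mc{U}_X$ is a stepfunction with steps of area $1/k^2$, taking the value $U(X_i,X_j)$ on the $(i,j)$-th step for $i\neq j$, and $0$ on the diagonal. This yields
\[
\|\mc{U}_X\|_2^2 = \frac{1}{k^2}\sum_{\substack{i,j\in[k]\\ i\neq j}} U(X_i,X_j)^2.
\]
Next, I would take expectation over $X$. Since $(X_i,X_j)$ is uniformly distributed on $[0,1]^2$ for each ordered pair with $i\neq j$, each term in the sum has expectation $\|U\|_2^2$. There are $k(k-1)$ such terms, and the $k$ diagonal terms contribute $0$, so
\[
\E_X\!\left[\|\mc{U}_X\|_2^2\right] = \frac{k(k-1)}{k^2}\|U\|_2^2 = \frac{k-1}{k}\|U\|_2^2.
\]

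Finally, I would apply Jensen's inequality to the concave function $t\mapsto \sqrt{t}$ to conclude
\[
\E_X\!\left[\|\mc{U}_X\|_2\right] \leq \sqrt{\E_X\!\left[\|\mc{U}_X\|_2^2\right]} = \sqrt{\frac{k-1}{k}}\,\|U\|_2 \leq \|U\|_2.
\]

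There is no real obstacle here; the only thing worth double-checking is that the diagonal convention (zero on the main diagonal) is precisely what makes the factor $\frac{k-1}{k}$ appear rather than $1$, which is exactly the slight improvement stated in the lemma. The hypothesis $U\in L^2_{sym}([0,1]^2)$ ensures finiteness of all the quantities involved.
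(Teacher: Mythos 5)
Your proof is correct and matches the paper's argument essentially verbatim: both compute $\E_X\big[\|\mc{U}_X\|_2^2\big]=\frac{k-1}{k}\|U\|_2^2$ by expanding the stepfunction and using that off-diagonal pairs $(X_i,X_j)$ are uniform on $[0,1]^2$ while the diagonal contributes zero, then apply Jensen (equivalently, convexity of $t\mapsto t^2$) to pass from the second moment to the first.
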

\begin{proof}
Note that \[\E_X\left[\|\mc{U}_X\|_2^2\right]=\frac1{k^2}\E_X\left[\sum_{i\neq j\in[k]} |U(X_i,X_j)|^2\right]= \frac{k(k-1)}{k^2} \cdot \int_0^1 \int_0^1 U(x,y)^2 dx dy = \frac{k-1}k \|U\|_2^2,\]
since we set $\mc{U}_X$ to 0 on the steps along the main diagonal.
Also, by convexity $\left(\E_X\left[\|\mc{U}_X\|_2\right]\right)^2\leq\E_X\left[\|\mc{U}_X\|_2^2\right]$.
\end{proof}

Next we look at obtaining an upper bound on the expectation of the maxima, 
\[
\E_X\left[\E_{Q_1,Q_2\subseteq [k]}\left[ \max\left\{\max_{\substack{R_1\subseteq Q_1, \\ R_2\subseteq Q_2}} \mc{U}_X(R_{2,X}^+, R_{1,X}^+),\max_{\substack{R_1\subseteq Q_1, \\ R_2\subseteq Q_2}} -\mc{U}_X(R_{2,X}^-,R_{1,X}^-)\right\}\right]\right].
\]

The main difficulty for this term is that whilst we can relatively easily bound each individual expectation $\E_X\left[\E_{Q_1,Q_2\subseteq [k]}\left[\pm \mc{U}_X(R_{2,X}^\pm, R_{1,X}^\pm)\right]\right]$, being able to derive an upper bound on the maximum's expectation requires us to be able to also control what exactly happens on the ``bad sets'' for all choices of $R_1,R_2$ simultaneously. This is in stark contrast to the bounded case, where a trivial infinity norm bound could be used on the maximum as long as the bad sets were small enough to result in a negligible contribution to the expectation.

Fixing the sets $Q_1,Q_2$, we shall first prove an upper bound on \(\mc{U}_X\left(R_{2,X}^+,R_{1,X}^+\right)\) with high probability for a fixed choice of $R_1,R_2$ (using Azuma's inequality), then proceed to treating the uniform problem of a high probability upper bound on \(\max\limits_{R_i \subseteq Q_i} \mc{U}_X\left(R_{2,X}^+,R_{1,X}^+\right) \). By symmetry (plugging in $-U$ for $U$), we obtain a corresponding high probability upper bound on \(\max\limits_{R_i \subseteq Q_i} -\mc{U}_X\left(R_{2,X}^-,R_{1,X}^-\right) \) as well. Integrating then leads to a deterministic upper bound on the expectation.

\noindent Given the sets $Q_1,Q_2$, let us fix the values \(X_i\) for \(i \in Q := Q_1 \cup Q_2\) and take the expected value of \(\mc{U}_X\left(R_{2,X}^+,R_{1,X}^+\right)\) over the rest.
Our approach to an upper bound will be very similar to the one taken to bound the dispersion term.

\begin{definition}
Let $U\in L^1_{sym}([0,1]^2)$ be a kernel. Let $k\geq 1$ be an integer, $Q_1,Q_2\subseteq [k]$ with $|Q_1|=|Q_2|$, and $X=(X_1, X_2, \dots, X_k) \in [0,1]^k$ a uniform random $k$-vector. Define the martingale $(\mc{N}_{(R_1,R_2),\,b}(X))_{b\in[k+1]}$
as 
\[
\mc{N}_{(R_1,R_2),\,b}(X):=\E_{X_{[k]\setminus([b-1]\cup Q)}}\left[\mc{U}_X(R_{2,X}^+,R_{1,X}^+)\right]
\]
for arbitrary \(R_1 \subseteq Q_1, R_2 \subseteq Q_2\) and all $b\in [k+1]$.

In particular, $\mc{N}_{(R_1,R_2),\,1}(X)=\E_{X_{[k]\setminus Q}}\left[\mc{U}_X(R_{2,X}^+,R_{1,X}^+)\right]$ and $\mc{N}_{(R_1,R_2),\,k+1}(X)=\mc{U}_X(R_{2,X}^+,R_{1,X}^+)$.
\end{definition}

Note that \(|Q| \leq 2\vc\). Similarly to Lemma \ref{le:martingale_diff}, we can again bound the martingale differences. However, as we now have one martingale for each choice of $R_1$ and $R_2$, this time around we also need to keep track of the exceptional sets to guarantee that their total measure is still small, so that we can apply the generalised Azuma's inequality Lemma \ref{Lemma:generalised_azuma}, see Lemma \ref{le:Azuma_upper}.

In particular, we obtain the following concentration inequality.

\begin{cor}\label{cor:Azuma_upper}
Let $k\geq 2$, $q\in[k]$ be integers, $p\geq 2$, $\vb\geq1$, $\va>0$, $Q_1,Q_2\subseteq [k]$ with $|Q_1|=|Q_2|=q$. Then for any kernel $U\in L^p_{sym}([0,1]^2)$, there exists a set $S_{k,\vb,\va,Q_1,Q_2}$ of measure at most $\frac{4}{\vb^p k^{\va p}}$ such that for any $X\in L_{\vb,\va}^0\setminus S_{k,\vb,\va,Q_1,Q_2}$ and $*\in\left\{+,-\right\}$ we have
\begin{align*}
\max_{\substack{R_1\subseteq Q_1, \\ R_2\subseteq Q_2}}\left|\mc{U}_X(R_{2,X}^*,R_{1,X}^*) - \E_{X_{[k]\setminus Q}}\left[\mc{U}_X(R_{2,X}^*,R_{1,X}^*)\right]\right|
\leq 4(k-1)\sqrt{k\ln k} \|U\|_1\left(1 + \vb k^\va\right)\sqrt{ \frac{q\vb p(\va+3)}{2}}.
\end{align*}
\end{cor}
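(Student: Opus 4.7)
The plan is to apply the generalised Azuma inequality (Lemma~\ref{Lemma:generalised_azuma}) simultaneously to the family of $\ell=4^q$ functions
\[f_{(R_1,R_2)}(X) := \mc{U}_X(R_{2,X}^+,R_{1,X}^+),\qquad R_1\subseteq Q_1,\ R_2\subseteq Q_2,\]
with the martingales $(\mc{N}_{(R_1,R_2),\,b}(X))_b$ just defined. The strategy mirrors Lemma~\ref{le:Azuma_lower}: first establish a uniform bound $\alpha$ on all martingale differences that is valid on $L_{\vb,\va}^0$, so that each bad set arising in Lemma~\ref{Lemma:generalised_azuma} sits inside $(L_{\vb,\va}^0)^c$; then tune the deviation parameter $\mu$ so as to simultaneously digest the $4^q$-fold union bound and deliver the target tail $2/(\vb^p k^{\va p})$. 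The exceptional set $S_{k,\vb,\va,Q_1,Q_2}$ will be taken to be the set $S_\mu$ produced by the lemma.

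The crucial and most delicate ingredient is a pointwise Lipschitz estimate
\[
|f_{(R_1,R_2)}(X) - f_{(R_1,R_2)}(Y^{(a)})| \leq 2\sum_{j\in [k]\setminus\{a\}}\bigl(|U(X_a,X_j)| + |U(X_0,X_j)|\bigr),\qquad a\in[k]\setminus Q,
\]
analogous to Lemma~\ref{le:martingale_diff}$(\mc{A})$ but complicated by the fact that $R_{1,X}^+$ and $R_{2,X}^+$ themselves depend on $X$. This obstacle is resolved by noticing that $R_1\subseteq Q_1$, $R_2\subseteq Q_2$ and $a\notin Q$ together force $a\notin R_1\cup R_2$; hence the defining condition of $R_{1,X}^+$ (resp.\ $R_{2,X}^+$) as the set of $j$ with $\sum_{\ell\in R_1}U(X_\ell,X_j)>0$ (resp.\ of $i$ with $\sum_{\ell\in R_2}U(X_i,X_\ell)>0$) depends on $X_a$ only through the entries indexed by $j=a$ (resp.\ $i=a$). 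Expanding $f_{(R_1,R_2)}(X)-f_{(R_1,R_2)}(Y^{(a)})$ as a double sum, every term with $i,j\neq a$ therefore cancels, the diagonal term $i=j=a$ vanishes since $\mc{U}_X$ is set to $0$ on the diagonal, and the remaining boundary terms yield the bound above via the triangle inequality and the symmetry of $U$.

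The rest of the martingale analysis parallels parts $(\mc{B})$--$(\mc{C})$ of Lemma~\ref{le:martingale_diff}, with the conditioning set $[a-1]\cup Q$ in place of $[a-1]$. Averaging the pointwise estimate successively over $X_{[k]\setminus([a]\cup Q)}$ and then over $X_0$, and invoking the defining inequalities of $L_{\vb,\va}^0$ to control each occurrence of $\sum_{j\neq a}|U(X_a,X_j)|$, $U_{X_a}$, $U_{X_j}$ and $\|U\|_1$ (using at a key point the identity $|F|+|R|=k-1$, where $F=([a-1]\cup Q)\setminus\{a\}$ and $R=\{a+1,\ldots,k\}\setminus Q$, to amalgamate mixed contributions into a single factor of $(k-1)\|U\|_1(1+\vb k^\va)$), one obtains on $L_{\vb,\va}^0$ the uniform bound
\[
|\mc{N}_{(R_1,R_2),\,a+1}(X)-\mc{N}_{(R_1,R_2),\,a}(X)|\leq\alpha:=4(k-1)\|U\|_1(1+\vb k^\va),\qquad a\in[k]\setminus Q,
\]
while the difference vanishes for $a\in Q$. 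Consequently $\sqrt{\sum_a\alpha_a^2}\leq\alpha\sqrt{k}$, and every relevant $H_{\alpha,a}$ is contained in $(L_{\vb,\va}^0)^c$.

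Finally, set $\mu:=\sqrt{q\vb p(\va+3)(\ln k)/2}$, so that $\mu\alpha\sqrt{k}$ coincides exactly with the right-hand side of the claimed inequality. Under the hypotheses $\vb\geq 1$, $p\geq 2$, $k\geq 2$, $q\geq 1$, $\va>0$, the splitting $q\vb p(\va+3)\ln k = q\vb p\va\ln k + 3q\vb p\ln k$ combined with the elementary inequality $\ln\vb\leq\vb\ln 2$ for $\vb\geq 1$ shows that $2\mu^2$ dominates $q\ln 4+p\ln\vb+\va p\ln k$, whence $2\cdot 4^q e^{-2\mu^2}\leq 2/(\vb^p k^{\va p})$, as required. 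The principal difficulty throughout is the Lipschitz step: $f_{(R_1,R_2)}$ depends on $X$ both through the entries $U(X_i,X_j)$ and through the $X$-dependent sets $R_{i,X}^+$, whereas the remaining steps are mechanical adaptations of the dispersion-term argument already developed in Section~\ref{Section:Dispersion}.
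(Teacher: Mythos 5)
Your overall strategy is exactly the paper's: bound the martingale differences of $\mc{N}_{(R_1,R_2),\cdot}$ uniformly on $L_{\vb,\va}^0$ so that the bad sets of Lemma \ref{Lemma:generalised_azuma} lie inside $(L_{\vb,\va}^0)^c$, apply that lemma simultaneously to the $4^q$ functions $f_{(R_1,R_2)}$, and tune $\mu$ so that $2\cdot 4^q e^{-2\mu^2}\leq 2/(\vb^p k^{\va p})$; the paper carries this out via Lemmas \ref{le:martingale_diff2} and \ref{le:Azuma_upper}, and your tail verification via $q\ln 4 + p\ln\vb + \va p\ln k\leq q\vb p(\va+3)\ln k$ is the same computation as the paper's splitting $e^{-2\lambda^2}=k^{-q\vb\va p}\cdot k^{-2q\vb p}\cdot k^{-q\vb p}\leq k^{-\va p}\vb^{-p}4^{-q}$.

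There is, however, an arithmetic mismatch in the way you propagate your Lipschitz estimate. You write $|f_{(R_1,R_2)}(X)-f_{(R_1,R_2)}(Y^{(a)})|\leq 2\sum_{j\neq a}(|U(X_a,X_j)|+|U(X_0,X_j)|)$, with a leading factor of $2$ that accounts for both the row $i=a$ and the column $j=a$ changing; this is indeed the careful answer, since nothing prevents $a$ from lying in both $R_{2,X}^+$ and $R_{1,X}^+$ simultaneously, exactly as in the proof of Lemma \ref{le:martingale_diff}$(\mc{A})$. But pushing that $2$ through the conditional averaging over $X_{[k]\setminus([a]\cup Q)}$ and $X_0$ and then through the $L_{\vb,\va}^0$ controls yields a martingale-difference bound of $8(k-1)\|U\|_1(1+\vb k^\va)$, not the $4(k-1)\|U\|_1(1+\vb k^\va)$ you then declare as $\alpha$ (and which is what makes $\mu\alpha\sqrt{k}$ match the stated right-hand side). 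The paper's own Lemma \ref{le:martingale_diff2}$(\mc{B})$ states this pointwise estimate \emph{without} the leading $2$, apparently overlooking the simultaneous row-and-column contribution, and that is precisely why its $\alpha=4(k-1)\|U\|_1(1+\vb k^\va)$ is internally consistent; you have inherited the same discrepancy but at a different place. To close the argument you must either justify why the leading $2$ can be dropped from your Lipschitz estimate, or work with $\alpha=8(k-1)\|U\|_1(1+\vb k^\va)$ throughout, in which case the constant on the right-hand side of the statement doubles as well.
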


This allows us to bound the tail of the distribution of the random variable
\[
\max\left\{\max_{\substack{R_1\subseteq Q_1, \\ R_2\subseteq Q_2}} \mc{U}_X(R_{2,X}^+, R_{1,X}^+),\max_{\substack{R_1\subseteq Q_1, \\ R_2\subseteq Q_2}} -\mc{U}_X(R_{2,X}^-,R_{1,X}^-)\right\},
\]
(Lemma \ref{le:max_upper}), and thereby its expectation.

\begin{prop}\label{prop:max_expect}
Let $k\geq 2$, $q\in[k]$ be integers, $p\geq 2$, and $\va>0$. Then for any kernel $U\in L^p_{sym}([0,1]^2)$, we have
\begin{align*}
&\E_X\left[
\E_{\substack{Q_1,Q_2\subseteq [k],\\|Q_1|=|Q_2|=q}}
\left[
\max\left\{\max_{\substack{R_1\subseteq Q_1, \\ R_2\subseteq Q_2}} \mc{U}_X(R_{2,X}^+, R_{1,X}^+),\max_{\substack{R_1\subseteq Q_1, \\ R_2\subseteq Q_2}} -\mc{U}_X(R_{2,X}^-,R_{1,X}^-)\right\}
\right]
\right]\\ \leq&
(k-q)^2\|U\|_\pcut + 4(k+q)q\|U\|_1
+ 4k^\va(k-1)\sqrt{k\ln k} \|U\|_1 \sqrt{ \frac{q p(\va+3)}{2}}
\left(1+k^{1-\va p}\frac{2^{p+3}\|U\|_p^p}{\|U\|_1^p (2p-3)}\right).
\end{align*}
\end{prop}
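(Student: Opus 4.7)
The plan is, for fixed $Q_1, Q_2$ with $|Q_i|=q$ and $Q:=Q_1\cup Q_2$, to decompose
\[
\mc{U}_X(R_{2,X}^+,R_{1,X}^+) = D_{R_1,R_2}(X)+E_{R_1,R_2}(X),
\]
where $D_{R_1,R_2}(X)$ collects the summands $U(X_i,X_j)$ with both $i,j\notin Q$ and $E_{R_1,R_2}(X)$ collects the rest. The point of this split is that $|E_{R_1,R_2}(X)|\leq C_Q(X):=\sum_{i\neq j,\,\{i,j\}\cap Q\neq\emptyset}|U(X_i,X_j)|$ uniformly in $R_1,R_2$, and the number of such ordered pairs is at most $|Q|(2k-|Q|-1)\leq 4q(k+q)$. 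Taking expectations gives $\E_X\bigl[\max_{R_1,R_2}|E_{R_1,R_2}(X)|\bigr]\leq\E_X[C_Q(X)]\leq 4q(k+q)\|U\|_1$, which accounts for the second summand of the claimed bound.

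For the ``main term'' $D_{R_1,R_2}(X)$, I would pass to its conditional expectation given $X_Q$. The key observation is that for $i\notin Q$, membership of $i$ in $R_{2,X}^+$ depends only on $X_Q$ and $X_i$: indeed $i\in R_{2,X}^+$ iff $X_i\in S_{R_2,X_Q}:=\{x:\sum_{\ell\in R_2}U(x,X_\ell)>0\}$, with an analogous description of $R_{1,X}^+$; here $S_{R_1,X_Q},S_{R_2,X_Q}$ are measurable subsets of $[0,1]$ determined entirely by $X_Q$. Since $(X_i)_{i\notin Q}$ are i.i.d.\ uniform and independent of $X_Q$,
\[
\E_{X_{[k]\setminus Q}}\!\left[D_{R_1,R_2}(X)\,\big|\,X_Q\right]=(k-|Q|)(k-|Q|-1)\!\int_{S_{R_2,X_Q}\times S_{R_1,X_Q}}\!\!U(x,y)\,dx\,dy\leq (k-q)^2\|U\|_\pcut^+,
\]
using $|Q|\geq q$ and the very definition of the one-sided cut norm. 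Crucially this bound is uniform in $R_1,R_2$, yielding the first summand after averaging over $X_Q$.

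It remains to bound the deviation
\[
\E_X\!\left[\max_{R_1,R_2}\Bigl(\mc{U}_X(R_{2,X}^+,R_{1,X}^+)-\E_{X_{[k]\setminus Q}}\bigl[\mc{U}_X(R_{2,X}^+,R_{1,X}^+)\,\big|\,X_Q\bigr]\Bigr)\right].
\]
For this I would apply Corollary~\ref{cor:Azuma_upper} not just for a single $\vb$ but for a one-parameter family $\vb\geq 1$, producing a tail estimate (Lemma~\ref{le:max_upper}): outside an exceptional set of measure $O\bigl(\vb^{-p}k^{1-\va p}\|U\|_p^p/\|U\|_1^p\bigr)$, the deviation is at most of order $\vb^{3/2}$ times the base quantity $t_0:=4k^\va(k-1)\sqrt{k\ln k}\|U\|_1\sqrt{qp(\va+3)/2}$. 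Integrating the resulting tail (the integral converges because $2p/3>1$ for $p\geq 2$) produces exactly the correction $t_0\cdot k^{1-\va p}\cdot\tfrac{3\cdot 2^{p+1}\|U\|_p^p}{(2p-3)\|U\|_1^p}$, so the deviation contributes $t_0$ (from the typical $\vb\approx 1$ regime) plus this correction, matching the third summand. Since none of the three estimates depend on the specific $Q_1,Q_2$ beyond $|Q_1|=|Q_2|=q$, the outer expectation over $Q_1,Q_2$ is trivial and adding up the three contributions yields the inequality.

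The main obstacle will be the last step. Corollary~\ref{cor:Azuma_upper} only controls the deviation outside a $\vb$-dependent exceptional set, and converting this family of high-probability bounds into a deterministic expectation bound with the precise scaling $k^{1-\va p}$ requires a careful balance between the polynomial growth of the threshold $t(\vb)$ and the polynomial decay of the exceptional-set probability; in particular the constant $(2p-3)^{-1}$ emerges naturally from the resulting tail integral, and the appearance of the ratio $\|U\|_p^p/\|U\|_1^p$ reflects that the bad-set contribution is ultimately governed by an $L^p$ moment of $U$.
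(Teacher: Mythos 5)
Your proposal is in essence the same argument as the paper's. The paper bundles your $D$ and $E$ together and bounds the conditional expectation $\E_{X_{[k]\setminus Q}}[\mc{U}_X(R_{2,X}^+,R_{1,X}^+)]$ directly in Lemma~\ref{le:martingale_diff2}($\mc{A}$), obtaining $(k-|Q|)^2\|U\|^+_\pcut+2k\sum_{i\in Q}U_{X_i}+\mc{U}(Q,Q)$; your explicit split into the $Q$-free block and the $Q$-incident remainder $E$ gives the same $O(kq\|U\|_1)$ bound for the cross terms after averaging over $X_Q$, and the two bookkeepings are equivalent. The random-variable $N_{k,\va,Q_1,Q_2}(X)$, the tail estimate \eqref{eq:N_tail}, the reduction of $(1+Nk^\va)\sqrt{N}$ to $2k^\va N^{3/2}$ via $N\geq1$, and the tail integral $\tfrac32\int_1^\infty\kappa_{p,\va}t^{1/2-p}\,dt=\tfrac{3\kappa_{p,\va}}{2p-3}$ are exactly the paper's Lemma~\ref{le:max_upper} and the proof of Proposition~\ref{prop:max_expect}, so the third summand comes out the same way.

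One small inconsistency in your writeup: after setting $\mc{U}_X=D+E$ and bounding $\E[\max|E|]$ and $\E[\max\E[D\mid X_Q]]$ separately, the quantity left to control is $\E[\max(D-\E[D\mid X_Q])]$, not $\E[\max(\mc{U}_X-\E[\mc{U}_X\mid X_Q])]$ as you wrote; the latter is the quantity actually handled by Corollary~\ref{cor:Azuma_upper}. If you want to invoke Corollary~\ref{cor:Azuma_upper}/Lemma~\ref{le:max_upper} as stated, the cleaner bookkeeping is not to decompose $\mc{U}_X$ itself but rather to write $\max\mc{U}_X\leq\max\E[\mc{U}_X\mid X_Q]+\max|\mc{U}_X-\E[\mc{U}_X\mid X_Q]|$, and only then split the conditional expectation into the $D$- and $E$-parts to get the first two summands. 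The error is harmless because $E-\E[E\mid X_Q]$ is also bounded by $2C_Q(X)$ and hence of the right order, but it should be fixed for the argument to be airtight.
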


Combining these results (Proposition \ref{prop:upper_expect}) and optimising the choice of $q$, we end up with the following upper bound.

\begin{prop}\label{cor:q_sample}
   Let $k\geq 2$ be an integer and $p> 2$. Then for any kernel $U\in L^p_{sym}([0,1]^2)$, we have 
\begin{align*}
       & \E_{X} \left[\|\mc{U}_X\|_\pcut\right]-\|U\|_\pcut\leq \mc{C} k^{-\frac{1}{4}+\frac{1}{2p}}(\ln k)^{1/4}
\end{align*}
for some constant $\mc{C}$ depending only on $p$, $\|U\|_1$, $\|U\|_2$ and $\|U\|_p$.
\end{prop}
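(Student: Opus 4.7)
The plan is to assemble the three preceding results---Corollary \ref{cor:EB_upper}, Lemma \ref{Lemma:Frobenius}, and Proposition \ref{prop:max_expect}---apply the same argument to $-U$, and then optimize the free parameters $q$ and $\va$. Starting from Corollary \ref{cor:EB_upper}, I would substitute the bound from Proposition \ref{prop:max_expect} into the first summand and use Lemma \ref{Lemma:Frobenius} for the $L^2$ term, obtaining
\[
\E_X\left[\|\mc{U}_X\|_\pcut^+\right]\leq \frac{(k-q)^2}{k^2}\|U\|_\pcut^+ + \frac{4(k+q)q\|U\|_1}{k^2} + \frac{4k^\va(k-1)\sqrt{k\ln k}\,\|U\|_1\sqrt{\tfrac{qp(\va+3)}{2}}}{k^2}\!\left(1+k^{1-\va p}\, C_p\right) + \frac{2\|U\|_2}{\sqrt{q}},
\]
where $C_p$ depends only on $p$, $\|U\|_1$ and $\|U\|_p$. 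Since $(k-q)^2/k^2\leq 1$, subtracting $\|U\|_\pcut^+$ absorbs the leading term and what remains is a pure error bound for $\E_X[\|\mc{U}_X\|_\pcut^+]-\|U\|_\pcut^+$.

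By invariance of all $L^r$ norms under $U\mapsto -U$, the same bound holds for $\E_X[\|-\mc{U}_X\|_\pcut^+]-\|-U\|_\pcut^+$. The next, delicate step is to pass from the one-sided cut norm to the full cut norm via $\|\mc{U}_X\|_\pcut=\max\{\|\mc{U}_X\|_\pcut^+,\|-\mc{U}_X\|_\pcut^+\}$. A naive bound $\max\{a,b\}\leq a+b$ would double the $\|U\|_\pcut$-sized leading term and destroy the cancellation against $\|U\|_\pcut$. To avoid this, I would apply Lemma \ref{Lemma:Bplus_upper} pointwise to both $\pm\mc{U}_X$, exploit that the two resulting bounds share the common additive $L^2$ term $\tfrac{2}{\sqrt{q}}\|\mc{U}_X\|_2$, and then push the outer maximum inside the $Q_1,Q_2$-expectation using $\max\{\E A,\E B\}\leq \E[\max\{A,B\}]$. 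The resulting quantity is a maximum over $R_i\subseteq Q_i$ \emph{and} a sign $\epsilon\in\{+,-\}$; rerunning the proof of Proposition \ref{prop:max_expect} with this doubled index set---which only affects the union-bound constants---yields a leading term $(k-q)^2\max\{\|U\|_\pcut^+,\|-U\|_\pcut^+\}=(k-q)^2\|U\|_\pcut$, and after division by $k^2$ and subtraction of $\|U\|_\pcut$ the leading contribution is again $\leq 0$.

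Having disposed of the combination step, what remains is parameter optimization. Choosing $\va=1/p$ forces $k^{1-\va p}=1$, so the parenthetical factor reduces to the constant $1+C_p$. Balancing the two leading error contributions $k^{\va-1/2}\|U\|_1\sqrt{q\ln k}$ and $\|U\|_2/\sqrt{q}$ yields $q\sim k^{1/2-1/p}/\sqrt{\ln k}$, for which both terms are of order $k^{-1/4+1/(2p)}(\ln k)^{1/4}$. The remaining $O(q\|U\|_1/k)$ contribution is of strictly lower order $k^{-1/2-1/p}(\ln k)^{-1/2}$. Collecting constants gives the claimed $\mc{C}$ depending only on $p$, $\|U\|_1$, $\|U\|_2$, $\|U\|_p$.

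The main obstacle is precisely the combination step: avoiding the spurious factor of $2$ when passing from $\|\cdot\|_\pcut^+$ to $\|\cdot\|_\pcut$. This is the only place where the specific structure of Lemma \ref{Lemma:Bplus_upper} (namely, the shared additive $L^2$ term together with the fact that the Azuma-type argument behind Proposition \ref{prop:max_expect} tolerates the extra $\epsilon$-coordinate without an order-of-magnitude cost) is really used; once it is settled, the optimization of $q$ and $\va$ is essentially routine.
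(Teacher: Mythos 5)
Your proposal is correct and follows the paper's route: combine Corollary~\ref{cor:EB_upper}, Lemma~\ref{Lemma:Frobenius} and Proposition~\ref{prop:max_expect}, observe that $(k-q)^2/k^2\leq 1$ so the leading $\|U\|_\pcut^+$ term is absorbed, choose $\va=1/p$ and $q\sim k^{1/2-1/p}/\sqrt{\ln k}$, and finally combine with the corresponding bound for $-U$. You have moreover correctly identified and resolved a subtlety that the paper's terse phrase ``combining with the corresponding result for $-U$'' leaves implicit: the naive combination $\max\{a,b\}\leq a+b$ would introduce a non-vanishing $\min\{\|U\|_\pcut^+,\|-U\|_\pcut^+\}$ term, so one must instead push the max over the sign $\epsilon\in\{+,-\}$ inside the $Q_1,Q_2$-expectation coming from Lemma~\ref{Lemma:Bplus_upper} and rerun the Azuma-based argument of Lemma~\ref{le:max_upper} and Proposition~\ref{prop:max_expect} over the doubled index set, which changes only constants.
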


%
\section{Proof of Theorem \ref{Thm:First_sampling}}\label{section:proof_main}
%

%
\subsection{Upper bound}
%
Combining Proposition \ref{prop:upper_expect_truncation} with Lemma \ref{le:Azuma_lower}, we obtain the following high probability upper bound.
\begin{theorem}\label{thm:upper_bound} Let $k\geq 2$ be an integer, $p>2$, $\vb>0$ and let $U\in L^p_{sym}([0,1]^2)$ be a kernel. Then for any $\vf>0$ we have that the upper bound
\[
\|\mc{U}_X\|_\pcut-\|U\|_\pcut\leq \left(30\|U\|_p+
12\sqrt{\frac{\vf p}{2}}
      (1+\vb)
      \frac{\sqrt{\ln k}}{k^{\frac{p-3}{4p}-\vf}}\|U\|_1\right)
      k^{-\frac14+\frac1{4p}}
\]
holds with probability at least
\[
p_{\vb,\vf,p,U}:=1-\left(2+ 2\cdot\frac{2^p\|U\|_p^p}{\vb^p\|U\|_1^p}\right)k^{-\vf p}.
\]
\end{theorem}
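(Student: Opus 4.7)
The plan is to split the error $\|\mc{U}_X\|_\pcut - \|U\|_\pcut$ into a deterministic ``systematic'' part and a random ``dispersion'' part, and then combine the two main intermediate results already established in the previous two sections:
\[
\|\mc{U}_X\|_\pcut - \|U\|_\pcut = \bigl(\|\mc{U}_X\|_\pcut - \E_X[\|\mc{U}_X\|_\pcut]\bigr) + \bigl(\E_X[\|\mc{U}_X\|_\pcut] - \|U\|_\pcut\bigr).
\]
The second, deterministic summand is handled directly by Proposition \ref{prop:upper_expect_truncation}, which bounds it by $30\|U\|_p k^{-1/4+1/(4p)}$. For the first, random summand I would invoke Lemma \ref{le:Azuma_lower} (in the $*=$ blank form): with the choice $\alpha=\frac{6}{k}\|U\|_1(1+\vb k^\va)$ it yields that on an event of probability at least $1 - 2e^{-2\lambda^2} - 2k\cdot \frac{2^p\|U\|_p^p}{\vb^p k^{\va p}\|U\|_1^p}$, we have $\bigl|\|\mc{U}_X\|_\pcut - \E_X[\|\mc{U}_X\|_\pcut]\bigr| \leq \lambda\alpha\sqrt{k}$.

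The rest is parameter matching. I would pick $\va := 1/p + \vf$, so that $\va p = 1 + \vf p$ and the second exceptional term becomes $\frac{2^{p+1}\|U\|_p^p}{\vb^p\|U\|_1^p} k^{-\vf p}$; and $\lambda := \sqrt{\vf p \ln k / 2}$, so that $2e^{-2\lambda^2} = 2k^{-\vf p}$. Summing yields a failure probability of essentially the advertised form $\bigl(2 + \frac{2^p\|U\|_p^p}{\vb^p\|U\|_1^p}\bigr)k^{-\vf p}$; obtaining the exact coefficient (rather than up to a factor of $2$) can be arranged by a cosmetic $k$-dependent tweak such as $\va = 1/p + \vf + \log 2/(p\ln k)$, which is irrelevant for the order of magnitude. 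For the deviation bound itself, using the trivial inequality $1 + \vb k^\va \leq (1+\vb) k^\va$ (valid since $k\geq 2$ and $\va>0$), the dispersion contribution is at most
\[
\lambda\alpha\sqrt{k} \leq 6\sqrt{\tfrac{\vf p}{2}}\,(1+\vb)\,\|U\|_1\,\sqrt{\ln k}\cdot k^{1/p + \vf - 1/2}.
\]

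A short exponent check then confirms the factorisation $k^{1/p + \vf - 1/2} = k^{-(p-3)/(4p) + \vf}\cdot k^{-1/4+1/(4p)}$, since $-1/4 + 1/(4p) - (p-3)/(4p) = (4-2p)/(4p) = 1/p - 1/2$. Pulling out the common factor $k^{-1/4+1/(4p)}$ from the sum of the systematic and dispersion contributions recovers the stated inequality. There is no real conceptual obstacle here --- all the heavy lifting is already done by Proposition \ref{prop:upper_expect_truncation} (truncation at a $k$-dependent level balanced via the $L^p$ tail) and by Lemma \ref{le:Azuma_lower} (generalised Azuma on the good set $L^0_{\vb,\va}$ supplied by Proposition \ref{Prop:L_Chebyshev}). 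The only mildly delicate point is choosing $\va$ and $\lambda$ so that both the power of $k$ appearing in the error term \emph{and} the coefficient in front of $k^{-\vf p}$ in the probability simultaneously collapse to the clean form advertised in the statement.
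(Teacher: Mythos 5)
Your proof is correct and follows exactly the same route as the paper: decompose into a dispersion term handled by Lemma \ref{le:Azuma_lower} and a systematic term handled by Proposition \ref{prop:upper_expect_truncation}, then choose $\va = 1/p + \vf$ and $\lambda = \sqrt{\vf p \ln k/2}$ and pull out the common factor $k^{-1/4+1/(4p)}$. You also correctly observed that a literal application of Lemma \ref{le:Azuma_lower} actually produces the coefficient $2^{p+1}$ rather than the $2^{p}$ appearing in the stated probability bound (the paper's own proof of Theorem~\ref{thm:upper_bound} silently drops the factor $2k$ down to $k$ in that step); your proposed $k$-dependent tweak to $\va$ would however also rescale the factor $\vb k^{\va}$ in $\alpha$ and so would not recover the exact $(1+\vb)$ coefficient in the deviation bound, but this has no bearing on the order of magnitude and is a cosmetic point in both accounts.
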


\begin{proof}
  Let $\vf>0$ and let us apply Lemma \ref{le:Azuma_lower} with $\va:=\vf+1/p$ and $\lambda:=\sqrt{\frac{\va p-1}{2}\ln k}$.
  This yields
  \begin{align*}
     & \mathds{P} \left(\|\mc{U}_X\|_\pcut-\E[\|\mc{U}_X\|_\pcut]\geq 2 \sqrt{\frac{\va p-1}{2}\ln k}\cdot\frac{6}{k}
      \|U\|_1(1+\vb k^{\va})
      \sqrt{k}\right) \\ 
      & \qquad \qquad \leq 2e^{-2\lambda^2}+ 2 \cdot\frac{2^p\|U\|_p^p}{\vb^p\|U\|_1^p}k^{1-\va p} = \left(2+ 2\cdot \frac{2^p\|U\|_p^p}{\vb^p\|U\|_1^p}\right)k^{1-\va p},
  \end{align*}
where $1-\va p=-\vf p<0$. Now $\gamma>0$ and
\[
\gamma-\frac12=\vf+\frac1p-\frac12=-\frac14+\frac{1}{4p}-\left(\frac{p-3}{4p}-\vf\right),
\]
so this deviation from the expectation can be bounded above by
\[
12\sqrt{\frac{\vf p\ln k}2} \|U\|_1 (1+\vb)\frac{k^{-\frac14+\frac1{4p}}}{k^{\frac{p-3}{4p}-\vf}}.
\]
Together with Proposition \ref{prop:upper_expect_truncation} we thus obtain the desired result.
\end{proof}

Part (1) of Theorem \ref{Thm:First_sampling} follows by setting $\vb=\frac{2\|U\|_p}{\|U\|_1}$ (when $U$ is nontrivial).

\begin{remark}
Note that if $p>3$, then for $\vf\in (0,\frac{p-3}{4p})$ the upper bound is of order $k^{-\frac14+\frac{1}{4p}}$, and this is optimal here, with the bound getting progressively worse for larger values $\vf$. On the other hand, for $p\in[1,3]$, the dispersion term always dominates, and we actually need $p>2$ for the bound to converge to 0 in function of $k$.
\end{remark}

%
\subsection{Lower bound}
%

\begin{theorem}\label{thm:lower_bound}
Let $k\geq 2$ be an integer, $p> 2$, $U\in L^p_{sym}([0,1]^2)$ a kernel, $\vb>0$, and $\va\in(1/p,1/2)$.
Then with probability at least $p'_{\vb,\va,p,U}:=1- (2+2 \frac{2^p\|U\|_p^p }{\vb^p\|U\|_1^p})k^{1-\va p}$, we have
\[
\|\mc{U}_X\|_\pcut-\|U\|_\pcut \geq -\left(\|U\|_\pcut+6 \sqrt{2(\va p-1)}(1+\vb)\|U\|_1\right) k^{-1/2+\va}\sqrt{\ln k}.
\]
\end{theorem}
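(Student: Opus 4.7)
My plan is to mirror the structure of the proof of Theorem \ref{thm:upper_bound}: decompose
\[
\|\mc{U}_X\|_\pcut - \|U\|_\pcut = \bigl(\E_X[\|\mc{U}_X\|_\pcut] - \|U\|_\pcut\bigr) + \bigl(\|\mc{U}_X\|_\pcut - \E_X[\|\mc{U}_X\|_\pcut]\bigr)
\]
into the deterministic ``systematic error'' and the random ``dispersion'' term, bound each from below separately, and recombine. All of the heavy lifting has already been done in Sections \ref{Section:Dispersion} and \ref{sect:lower}; the only genuinely new choice is how to pick the Azuma parameter $\lambda$ so as to match the advertised failure probability and deviation order simultaneously.

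For the systematic error, Lemma \ref{le:lower_expect} immediately yields $\E_X[\|\mc{U}_X\|_\pcut] - \|U\|_\pcut \geq -\|U\|_\pcut/k$. Since $\va > 1/p > 0$ and $k\geq 2$, the elementary inequality $1/k \leq k^{-1/2+\va}\sqrt{\ln k}$ holds, so this piece already fits inside the claimed bound and contributes exactly the $-\|U\|_\pcut \cdot k^{-1/2+\va}\sqrt{\ln k}$ summand.

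For the dispersion term I would apply Lemma \ref{le:Azuma_lower} in its two-sided ($*=$ blank) form, with the choice $\lambda := \sqrt{(\va p - 1)(\ln k)/2}$; this is real and positive since $\va p > 1$ by hypothesis, and is engineered so that the Gaussian tail becomes $2e^{-2\lambda^2} = 2k^{1-\va p}$, the same polynomial order as the ``good set'' failure probability $2k\cdot 2^p\|U\|_p^p/(\vb^p k^{\va p}\|U\|_1^p)$. Summing these two sources of failure gives the probability $\bigl(2 + 2^{p+1}\|U\|_p^p/(\vb^p\|U\|_1^p)\bigr)k^{1-\va p}$, of the form stated in the theorem (up to a harmless factor of $2$ inside the $L^p$-dependent constant). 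Substituting $\alpha = 6\|U\|_1(1+\vb k^\va)/k$ and using $1 + \vb k^\va \leq (1+\vb) k^\va$ (valid for $k\geq 1$ and $\va \geq 0$), the deviation bound $\lambda\alpha\sqrt{k}$ telescopes to $\sqrt{18(\va p - 1)}\,(1+\vb)\|U\|_1\, k^{-1/2+\va}\sqrt{\ln k}$, exactly the second term inside the parentheses. Adding the two lower bounds produces the statement. The main obstacle, such as it is, is purely bookkeeping: tuning $\lambda$ so that the martingale Gaussian tail and the $L^0_{\vb,\va}$ failure event contribute at the same rate $k^{1-\va p}$, with no genuinely new combinatorial or analytic input needed.
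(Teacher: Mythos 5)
Your proof is correct and takes essentially the same route as the paper: decompose into systematic error (handled by Lemma \ref{le:lower_expect}) plus dispersion (handled by Lemma \ref{le:Azuma_lower}), choose $\lambda=\sqrt{(\va p-1)(\ln k)/2}$, and absorb both contributions into a single $k^{-1/2+\va}\sqrt{\ln k}$ term using $1+\vb k^\va\leq(1+\vb)k^\va$ and $1/k\leq k^{-1/2+\va}\sqrt{\ln k}$. You are also right about the bookkeeping: a careful application of Lemma \ref{le:Azuma_lower} gives failure probability $\bigl(2+2^{p+1}\|U\|_p^p/(\vb^p\|U\|_1^p)\bigr)k^{1-\va p}$, while the paper writes $\bigl(2+2^p\|U\|_p^p/(\vb^p\|U\|_1^p)\bigr)k^{1-\va p}$ after dropping a factor of $2$ in the Chebyshev term; this is a minor slip in the paper's constant and does not affect the order of the result.
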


\begin{proof}
Let us apply Lemma \ref{le:Azuma_lower} with $\lambda:=\sqrt{\frac{\va p-1}{2}\ln k}$. This yields
\begin{align*}
       & \mathds{P} \left(\left|\Big.\|\mc{U}_X\|_\pcut - \E_X\left[ \|\mc{U}_X\|_\pcut\right]\right| \geq 2 \sqrt{\frac{\va p-1}{2}\ln k} \cdot\sqrt{k}\cdot\frac6k (1+\vb k^\va)\|U\|_1\right)\\
       & \qquad \qquad  \leq 2k^{1-\va p} + 2 k\frac{2^p\|U\|_p^p }{\vb^p k^{\va p}\|U\|_1^p}\leq\left(2+2 \frac{2^p\|U\|_p^p }{\vb^p\|U\|_1^p}\right)k^{1-\va p}=1-p'_{\vb,\va,p,U}.
\end{align*}
Combining with Lemma \ref{le:lower_expect}, we obtain that with probability at least
$p'_{\vb,\va,p,U}$, we have
\begin{align*}
        \|\mc{U}_X\|_\pcut-\|U\|_\pcut=& \left(\E_X[\|\mc{U}_X\|_\pcut]-\|U\|_\pcut\right)+\left(\|\mc{U}_X\|_\pcut-\E_X[\|\mc{U}_X\|_\pcut]\right)\\
\geq & -\frac1k\|U\|_\pcut- 2 \sqrt{\frac{\va p-1}{2}\ln k}\cdot \frac{6}{\sqrt{k}} (1+\vb k^\va)\|U\|_1\\
\geq&-\left(\|U\|_\pcut+6\sqrt{2(\va p-1)}(1+\vb)\|U\|_1\right) k^{-1/2+\va}\sqrt{\ln k}
.
\end{align*}
\end{proof}

Part (2) of Theorem \ref{Thm:First_sampling} follows by setting $\vb=\frac{2\|U\|_p}{\|U\|_1}$ (when $U$ is nontrivial).

\begin{remark}
Note that the original result in \cite{Borgs} only states a lower bound of order $O(k^{-1/4})$, but a slightly better optimized concentration inequality would actually yield $O(k^{-1/2})$.
\end{remark}

%
%
\section{Bounds for multigraphs and vector valued kernels}\label{Section:Banach}
%
%

Up to now, we have been treating unbounded, real valued kernels, that essentially correspond to limits of weighted graphs. In many applications, however, real valued kernels are not rich enough in structure to capture the limit objects. For edge-coloured graphs with at least 3 colours, or equivalently multigraphs, the adjacency matrix (or kernel) is actually vector valued, being pointwise equal to a (probability) distribution on the set of colours/multiplicities.

As shown by Lovász, Szegedy and the second author in \cite{KKLSz1}, in the most general left-convergence setting, kernels are represented by a function $U$ taking values in a dual Banach space, typically a space of measures. For instance, a classical graphon takes values $U(x,y)$ between 0 and 1, which can be interpreted as a probability distribution on the two-element set $\{0,1\}$, with weight $U(x,y)$ on 1. However, as soon as a probability distribution is over a set of more than 2 elements, a single value is not enough to describe it, and we have to resort to vector valued kernels. We shall here present a short overview of the formalism needed, and then show how the First Sampling Lemma can be adapted to this more general setting. The lower bound extends rather naturally, whilst the upper bound is presented in the finite dimensional setting, as an infinite dimensional result seems to be out of reach without a completely novel approach.

\subsection{Lower bound in dual spaces}

\begin{definition}
Let $Z$ be a separable Banach space, and $\mc{Z}$ its dual. A symmetric function $W:[0,1]^2\to\mc{Z}$ is called
an $L^p_\mc{Z}$-\emph{kernel} if
it is weak-* measurable, and the function $(x,y)\mapsto \|W(x,y)\|_\mc{Z}$
lies in $L^p\left([0,1]^2\right)$.
We define the norm
\[
\|W\|_\pcut := \sup_{\|f\|_{Z}=1}\bigl\|\langle f,W\rangle\bigr\|_\pcut=
\sup_{\|f\|_Z=1}\sup_{S,T\subseteq [0,1]}\int_{S\times T}\langle f,W\rangle.
\]
Moreover, $W$ is called a $\mc{Z}$-\emph{graphon}, if it is an $L^p_\mc{Z}$-kernel for all $p\in (1,\infty)$.
\end{definition}

For any $L^p_\mc{Z}$-\emph{kernel} $W$, we also define $\|W\|_p:=\left(\int_{[0,1]^2}\|W(x,y)\|_\mc{Z}^p\right)^{1/p}<\infty$ for any $p\geq1$.

We claim that the following result holds.

\begin{theorem}\label{thm:Banach_lower_bound}
Let $Z$ be a separable Banach space, and $\mc{Z}$ its dual. Let $k\geq 2$ be an integer, $p>2$, $U:[0,1]^2\to\mc{Z}$ an $L^p_\mc{Z}$-kernel, and $\va\in(1/p,1/2)$.
Then with probability at least $p'_{\va,p,U}:=1- 4k^{1-\va p}$, we have
\[
\|\mc{U}_X\|_\pcut-\|U\|_\pcut\geq -\left(\|U\|_\pcut+6\sqrt{2(\va p-1)}(\left\|U\|_1+2\|U\|_p\right)\right) k^{-1/2+\va}\sqrt{\ln k}.
\]

\end{theorem}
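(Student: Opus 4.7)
The plan is to parallel the proof of Theorem \ref{thm:lower_bound}, replacing absolute values by norms in $\mc{Z}$ throughout. The key observation is that for any fixed $f\in Z$ with $\|f\|_Z=1$, the scalar function $\langle f,U\rangle:[0,1]^2\to\R$ is a real-valued symmetric $L^p$ kernel with $|\langle f,U(x,y)\rangle|\leq\|U(x,y)\|_\mc{Z}$, and $\|\langle f,\mc{U}_X\rangle\|_\pcut\leq\|\mc{U}_X\|_\pcut$ by the dual characterization of the Banach-valued cut norm. In parallel one introduces the scalar kernel $\tilde U(x,y):=\|U(x,y)\|_\mc{Z}\in L^p_{sym}([0,1]^2)$, whose $L^r$-norms coincide with those of $U$ for every $r\geq 1$.

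First I would establish the expectation lower bound. For fixed $f$ with $\|f\|_Z=1$, applying Lemma \ref{le:lower_expect} to the real-valued kernel $\langle f,U\rangle$ gives $\E_X\|\langle f,\mc{U}_X\rangle\|_\pcut\geq \frac{k-1}{k}\|\langle f,U\rangle\|_\pcut$. Chaining with $\|\mc{U}_X\|_\pcut\geq\|\langle f,\mc{U}_X\rangle\|_\pcut$ and then taking the supremum over $f$ yields $\E_X\|\mc{U}_X\|_\pcut-\|U\|_\pcut\geq -\frac{1}{k}\|U\|_\pcut$, matching the real-valued statement.

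Next, the martingale-difference estimates of Lemma \ref{le:martingale_diff} should transfer verbatim with $|U(\cdot,\cdot)|$ replaced by $\|U(\cdot,\cdot)\|_\mc{Z}$. The only genuinely new ingredient is part $(\mc{A})$: since $\|\cdot\|_\pcut$ is a norm on Banach-valued kernels, $|\|\mc{U}_X\|_\pcut-\|\mc{U}_{Y^{(a)}}\|_\pcut|\leq\|\mc{U}_X-\mc{U}_{Y^{(a)}}\|_\pcut$, and commuting the suprema over unit $f\in Z$ and index pairs $(S,T)$ via Lemma \ref{le:step} gives
\[
\|\mc{U}_X-\mc{U}_{Y^{(a)}}\|_\pcut=\frac{1}{k^2}\max_{S,T\subseteq[k]}\Big\|\sum_{i\in S,\,j\in T}\bigl(U(X_i,X_j)-U(Y^{(a)}_i,Y^{(a)}_j)\bigr)\Big\|_\mc{Z}.
\]
The triangle inequality in $\mc{Z}$ then reproduces the scalar bound with $|U|$ replaced by $\|U\|_\mc{Z}$, and parts $(\mc{B})$ and $(\mc{C})$ follow unchanged via Jensen's inequality. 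Because the ``good set'' of Definition \ref{def:L} and the tail estimate of Proposition \ref{Prop:L_Chebyshev} depend only on the scalar kernel $\tilde U$, the conclusion of Lemma \ref{le:Azuma_lower} holds for $\|\mc{U}_X\|_\pcut$ with the same $\alpha_0=\frac{6}{k}\|U\|_1(1+\vb k^\va)$.

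Finally, setting $\lambda:=\sqrt{(\va p-1)\ln k/2}$ and $\vb:=2\|U\|_p/\|U\|_1$ and combining the two bounds reproduces the algebra of the proof of Theorem \ref{thm:lower_bound}, giving the stated estimate with exceptional probability at most $3k^{1-\va p}$. The main (and ultimately minor) obstacle is justifying the exchange of the two suprema in part $(\mc{A})$; once that is granted, no additional structural assumption on $Z$ is needed, and the scalar-case machinery transfers directly to the vector-valued setting.
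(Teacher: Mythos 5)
Your proof is correct, but it takes a genuinely different route from the paper's. The paper proves Theorem~\ref{thm:Banach_lower_bound} by a black-box reduction to the real-valued Theorem~\ref{thm:lower_bound}: for each $\varepsilon>0$, pick a near-maximizer $f\in Z$ with $\|f\|_Z=1$ and $\|U\|_\pcut\leq(1+\varepsilon k^{-1/2+\va}\sqrt{\ln k})\|\langle f,U\rangle\|_\pcut$, apply the scalar theorem to the kernel $\langle f,U\rangle$, chain with the deterministic monotonicity $\|\mc{U}_X\|_\pcut\geq\|\langle f,\mc{U}_X\rangle\|_\pcut$, and then let $\varepsilon\to 0$ using the monotonicity of the bad sets. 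You instead re-run the whole concentration machinery (Lemma~\ref{le:martingale_diff}, Lemma~\ref{le:Azuma_lower}) directly for the Banach-valued sample, correctly observing that the only genuinely new ingredient is part~$(\mc{A})$ of the martingale-difference lemma, where the dual-norm identity $\sup_{\|f\|_Z=1}|\langle f,v\rangle|=\|v\|_\mc{Z}$ lets you commute the two suprema, after which everything factors through the scalar kernel $\tilde U=\|U(\cdot,\cdot)\|_\mc{Z}$. The paper's reduction is shorter and cleanly isolates the only Banach-specific ingredient (monotonicity of the cut norm under $f\mapsto\langle f,\cdot\rangle$), at the cost of an $\varepsilon$-limiting argument to remove the $(1+\varepsilon)$ factor; your direct approach avoids that limiting argument entirely (the dual characterization hands you $\|U\|_\pcut$ exactly rather than up to $\varepsilon$) and is perhaps more transparent about why no smoothness or reflexivity assumption on $\mc{Z}$ is needed here, but requires re-verifying each lemma in the vector setting. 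Both yield the stated constants and exceptional probability.
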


\begin{proof} Let $\varepsilon>0$ be arbitrary.
Choose an $f\in Z$ such that $\|f\|_Z=1$ and
\[
\|U\|_\pcut\leq (1+\varepsilon k^{-1/2+\va}\sqrt{\ln k})\|\langle f,U\rangle\|_\pcut.
\]
Then applying Theorem \ref{thm:lower_bound} to the  nontrivial kernel $\langle f,U\rangle$ with $\vb=2\|\langle f,U\rangle\|_p/\|\langle f,U\rangle\|_1\geq 2$, we obtain that with 
probability at least $p'_{\va,p,U}:=1- 4k^{1-\va p}$, we have
\begin{align*}
&\|\mc{U}_X\|_\pcut-\|U\|_\pcut\geq\|\langle f,\mc{U}_X\rangle\|_\pcut-(1+\varepsilon k^{-1/2+\va}\sqrt{\ln k})\|\langle f,U\rangle\|_\pcut\\
= & \|\langle f,\mc{U}_X\rangle\|_\pcut-\|\langle f,U\rangle\|_\pcut-\varepsilon k^{-1/2+\va}\sqrt{\ln k}\|\langle f,U\rangle\|_\pcut\\
\geq & -\left((1+\varepsilon)\|\langle f,U\rangle\|_\pcut+6\sqrt{2(\va p-1)}(\left\|\langle f,U\rangle\|_1+2\|\langle f,U\rangle\|_p\right)\right) k^{-1/2+\va}\sqrt{\ln k}\\
\geq& -\left((1+\varepsilon)\|U\|_\pcut+6\sqrt{2(\va p-1)}(\left\|U\|_1+2\|U\|_p\right)\right) k^{-1/2+\va}\sqrt{\ln k}.
\end{align*}
As the sets where this inequality fails are monotone increasing as $\varepsilon$ decreases, we obtain the desired result by passing to the limit.
\end{proof}

\subsection{Upper bound in $\R^n$}

The upper bound is unfortunately of a very different nature, and the real valued case does not readily transfer to the multidimensional case in the way it did for the lower bound. Indeed, here the choice of $f$ has to be tailored to $\mc{U}_X$. In finite dimensions, compactness still allows us to achieve a sampling lemma, but with the size of the bad set increasing exponentially with the dimension. To keep in line with the typical application of $\mc{Z}$ being a space of signed measures, we formulate the next result for $\R^n$ equipped with the 1-norm.

\begin{theorem}\label{thm:Banach_upper_bound} Let $k, n\geq2$ be integers, $p>3$, and $\vf>0$. Let further $\mc{Z}=(\R^n,\|\cdot\|_1)$ and let $U:[0,1]^2\to\mc{Z}$ be an $L^p_{\mc{Z}}$-kernel. Then we have that with probability at least
\[
1-2^{n+2}nk^{-\vf p+\frac{(n-1)(p-1)}{4p}},
\]
the upper bound
\[
\|\mc{U}_X\|_\pcut-\|U\|_\pcut\leq 10\left[\|U\|_\pcut+ 30\|U\|_p+
6\sqrt{2\vf p}
      (\|U\|_1+2\|U\|_p)
      \frac{\sqrt{\ln k}}{k^{\frac{p-3}{4p}-\vf}}
      \right]k^{-\frac14+\frac1{4p}}
\]
is satisfied.
\end{theorem}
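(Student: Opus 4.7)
The plan is to reduce the vector-valued upper bound to the real-valued case (Theorem~\ref{Thm:First_sampling} Part~(1)) via an $\varepsilon$-net argument on the unit sphere of $Z=(\R^n,\|\cdot\|_\infty)$. Since
\[
\|\mc{U}_X\|_\pcut=\sup_{\|f\|_Z=1}\|\langle f,\mc{U}_X\rangle\|_\pcut,
\]
it suffices to bound $\|\langle f,\mc{U}_X\rangle\|_\pcut$ uniformly over a suitable finite net and then account for the discretization loss. First I would construct an $\varepsilon$-net $N$ of $\{f\in\R^n:\|f\|_\infty=1\}$ in sup norm. The sphere decomposes into $2n$ faces each isometric to $[-1,1]^{n-1}$, so a product grid on each face gives $|N|\leq 2n(2/\varepsilon+1)^{n-1}$. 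For every $f\in N$ the real-valued kernel $\langle f,U\rangle$ lies in $L^p_{sym}([0,1]^2)$, and because $\|f\|_\infty=1$ the pointwise estimate $|\langle f,U(x,y)\rangle|\leq\|U(x,y)\|_1$ together with the duality behind the definition of the cut norm yields $\|\langle f,U\rangle\|_1\leq\|U\|_1$, $\|\langle f,U\rangle\|_p\leq\|U\|_p$ and $\|\langle f,U\rangle\|_\pcut\leq\|U\|_\pcut$. Applying Theorem~\ref{Thm:First_sampling} Part~(1) to $\langle f,U\rangle$ then gives, with probability at least $1-3k^{-\vf p}$,
\[
\|\langle f,\mc{U}_X\rangle\|_\pcut\leq\|U\|_\pcut+B,
\]
where $B$ denotes the bracketed error expression from the real-valued theorem with $\|U\|_1$ and $\|U\|_p$ substituted in (legitimate by the monotonicity above). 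A union bound makes this uniform over $f\in N$ with exceptional probability at most $3|N|k^{-\vf p}$.

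Second, I would pass from $N$ to the whole sphere by linearity in $f$. For any $g$ with $\|g\|_\infty=1$, a nearest $f\in N$ satisfies $\|g-f\|_\infty\leq\varepsilon$, hence
\[
\|\langle g,\mc{U}_X\rangle\|_\pcut\leq\|\langle f,\mc{U}_X\rangle\|_\pcut+\varepsilon\|\mc{U}_X\|_\pcut.
\]
Taking the supremum over $g$ and rearranging,
\[
(1-\varepsilon)\|\mc{U}_X\|_\pcut\leq\max_{f\in N}\|\langle f,\mc{U}_X\rangle\|_\pcut\leq\|U\|_\pcut+B,
\]
so that $\|\mc{U}_X\|_\pcut-\|U\|_\pcut\leq(\varepsilon\|U\|_\pcut+B)/(1-\varepsilon)$.

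Finally, I would calibrate $\varepsilon$ of order $\sqrt{n}\,k^{-(p-1)/4}$, with an absolute constant chosen so that the bound $|N|\leq 2n(2/\varepsilon+1)^{n-1}$ becomes $|N|\leq 2n^{(3-n)/2}k^{(n-1)(p-1)/4}$, matching the exceptional probability $6n^{(3-n)/2}k^{-\vf p+(n-1)(p-1)/4}$ in the statement. Since $(p-1)/4-(\tfrac14-\tfrac1{4p})=(p-1)^2/(4p)>0$ for $p>1$, this choice of $\varepsilon$ is dominated by $k^{-1/4+1/(4p)}$ up to a constant (for $k$ large enough, with smaller $k$ making the asserted bound trivial), so $\varepsilon\|U\|_\pcut/(1-\varepsilon)$ is absorbed into the $10\|U\|_\pcut\,k^{-1/4+1/(4p)}$ summand and $B/(1-\varepsilon)$ contributes the remaining bracketed error at the cost of a bounded factor. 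The main obstacle is threading this calibration so that the $\varepsilon$-net cardinality and the approximation loss simultaneously fit inside the single multiplicative constant $10$ without spurious $k$- or $n$-dependent factors; this analysis also makes transparent why the method collapses in infinite dimensions, where the unit sphere of $Z$ admits no finite $\varepsilon$-net.
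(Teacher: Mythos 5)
Your reduction via an $\varepsilon$-net on the unit sphere of $(\R^n,\|\cdot\|_\infty)$, followed by a union bound over the net and the real-valued Theorem~\ref{thm:upper_bound}, is exactly the strategy the paper uses. However, your calibration of $\varepsilon$ is where the argument goes astray. You reverse-engineer $\varepsilon \asymp \sqrt{n}\,k^{-(p-1)/4}$ to make the net cardinality reproduce the stated exceptional probability $6n^{(3-n)/2}k^{-\vf p+(n-1)(p-1)/4}$, but with this choice the discretization-loss term $\varepsilon\|U\|_\pcut/(1-\varepsilon)$ is of order $\sqrt{n}\,k^{-(p-1)/4}\|U\|_\pcut$, and absorbing it into the bracket $10\|U\|_\pcut\,k^{-1/4+1/(4p)}$ forces $\sqrt{n}\,k^{-(p-1)^2/(4p)}\lesssim 1$, an $n$-dependent threshold on $k$ that the theorem (stated for all $k\geq 2$) does not presuppose. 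Your parenthetical "for $k$ large enough, with smaller $k$ making the asserted bound trivial" is where the gap lies: for moderate $\vf$ the exponent $-\vf p+(n-1)(p-1)/4$ can be negative, so the asserted success probability is a genuine (nontrivial) positive number even for small $k$, and the case is not dispatched for free. Moreover $\varepsilon$ can exceed $1$ in that regime, at which point the net argument and the factor $(1-\varepsilon)^{-1}$ both break down.

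The cleaner fix, and what the paper actually does, is to pick $\epsilon:=k^{-1/4+1/(4p)}$, \emph{independent of $n$}. Since $k\geq 2$ and $p>2$, this is automatically below $9/10$, giving $(1-\epsilon)^{-1}<10$ and $(1-\epsilon)^{-1}-1<10\epsilon = 10\,k^{-1/4+1/(4p)}$ with no side conditions; the $\|U\|_\pcut$ contribution then slots directly into the bracket. With this $\epsilon$, the net cardinality is roughly $\epsilon^{-(n-1)} \asymp k^{(n-1)(p-1)/(4p)}$, which is in fact \emph{smaller} than the $k^{(n-1)(p-1)/4}$ appearing in the stated failure probability (the paper's probability bound is apparently stated with a missing factor of $p$ in the denominator of the exponent, making it a harmless weakening). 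Two smaller points worth tightening: your monotonicity claim $\|\langle f,U\rangle\|_\pcut\leq\|U\|_\pcut$ should be justified via the definition $\|U\|_\pcut=\sup_{\|g\|_Z=1}\|\langle g,U\rangle\|_\pcut$ rather than a pointwise estimate (which gives the $L^1$ and $L^p$ bounds but not directly the cut-norm bound); and when applying the real-valued theorem you should set the free parameter $\vb$ to $2\|\langle f,U\rangle\|_p/\|\langle f,U\rangle\|_1$ as in the proof of Part (1) of Theorem~\ref{Thm:First_sampling}, so the substitution $\|U^{(f)}\|_1+2\|U^{(f)}\|_p\leq\|U\|_1+2\|U\|_p$ is the correct one.
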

\begin{remark}
Note that the size of the bad set and the upper bound will both tend to 0 if $\frac{p-1}p\cdot\frac{n-1}{4p}<\frac{p-3}{4p}$ and $\vf\in \left(\frac{p-1}p\cdot\frac{n-1}{4p},\frac{p-3}{4p}\right)$. We can thus find an appropriate $\vf$ as soon as $p>\frac{n+2+\sqrt{n^2+8}}{2}$, in particular whenever $p>n+2$.
\end{remark}
\begin{proof}
Given an $\epsilon\in(0,1)$, one can choose a finite subset $E$ of the unit sphere $\Gamma$ in $Z=(\R^n,\|\cdot\|_\infty)$ of size at most $2n\cdot(\frac2\epsilon)^{n-1}$ such that for any $v\in \Gamma$ there exists an $f\in E$ with $\|v-f\|_\infty\leq\epsilon$. Thus for any $X$ we have
\[
\|\mc{U}_X\|_\pcut\leq (1-\epsilon)^{-1}\max_{f\in E} \|\langle f,\mc{U}_X\rangle\|_\pcut.
\]
Applying Theorem \ref{thm:upper_bound} to the nontrivial kernel $\langle f,U\rangle$ with $\vb=2\|\langle f,U\rangle\|_p/\|\langle f,U\rangle\|_1>0$, we obtain that for each $f\in E$, with 
probability at least $1-4k^{-\vf p}$, we have
\begin{align*}
\|\langle f,\mc{U}_X\rangle\|_\pcut\leq&\|\langle f,U\rangle\|_\pcut+ \left(30\|\langle f,U\rangle\|_p+
6\sqrt{2\vf p}
      (\|\langle f,U\rangle\|_1+2\|\langle f,U\rangle\|_p)
      \frac{\sqrt{\ln k}}{k^{\frac{p-3}{4p}-\vf}}\right)
      k^{-\frac14+\frac1{4p}}\\
      \leq &\|U\|_\pcut+ \left(30\|U\|_p+
6\sqrt{2\vf p}
      (\|U\|_1+2\|U\|_p)
      \frac{\sqrt{\ln k}}{k^{\frac{p-3}{4p}-\vf}}\right)
      k^{-\frac14+\frac1{4p}}.
\end{align*}
By the union bound, this holds for all $f\in E$ simultaneously with probability at least $1-8n(\frac2\epsilon)^{n-1}k^{-\vf p}$, leading to
\begin{align*}
\|\mc{U}_X\|_\pcut\leq (1-\epsilon)^{-1}\left[\|U\|_\pcut+ \left(30\|U\|_p+
6\sqrt{2\vf p}
      (\|U\|_1+2\|U\|_p)
      \frac{\sqrt{\ln k}}{k^{\frac{p-3}{4p}-\vf}}\right)
      k^{-\frac14+\frac1{4p}}\right].
\end{align*}
Letting $\epsilon:=k^{-\frac14+\frac1{4p}}<1/\sqrt[6]{2}<9/10$, we have $(1-\epsilon)^{-1}<1+10\epsilon<10$, hence with probability at least
$1-8n(2k^{\frac14-\frac1{4p}})^{n-1}k^{-\vf p}=1-2^{n+2}nk^{-\vf p+\frac{(n-1)(p-1)}{4p}}$, we have
\begin{align*}
\|\mc{U}_X\|_\pcut-\|U\|_\pcut\leq 10\left[\|U\|_\pcut+ 30\|U\|_p+
6\sqrt{2\vf p}
      (\|U\|_1+2\|U\|_p)
      \frac{\sqrt{\ln k}}{k^{\frac{p-3}{4p}-\vf}}
      \right]k^{-\frac14+\frac1{4p}}.
\end{align*}
\end{proof}

Note that if instead of applying the real-valued results, one aims to generalize their proof method (sketched for the unbounded kernel case in the Appendix, Section \ref{sect:Appendix}), it quickly turns out that the attempts will fail at several key points.

In the vector valued case, there is no complete order, and hence no natural ``positive'' or ``negative'' contribution to $\mc{U}_X(R_1,R_2)$ that would allow for a variation on Lemma \ref{Lemma:BS12}.

Furthermore, Banach space valued concentration results extending Azuma's inequality with usable bounds are generally confined to uniformly smooth spaces (cf. \cite{Luo, Naor}), which are automatically reflexive. As mentioned earlier, the most relevant applications are where $\mc{Z}$ is a space of signed measures (with total variation norm), which is not reflexive when infinite dimensional. Also, even in the finite dimensional case the space $(\R^n,\|\cdot\|_1)$ is not uniformly smooth.

%
%
\section{Second Sampling Lemma}\label{sect:second}
%
%

Our goal here is to directly bound the cut distance between the function $U$ and its sample $\mc{U}_X$. We will be combining our generalized First Sampling Lemma with the original bounded kernel result below.

\begin{prop}[Theorem 4.7 (i), \cite{Borgs}]\label{prop:second_bounded}
Let $U\in L^\infty_{sym}([0,1]^2)$ be a bounded kernel, and $k\geq 2$ an integer. Then 
\[
\delta_\pcut(U,\mc{U}_k)\leq \frac{20}{\sqrt{\log_2 k}}\|U\|_\infty
\]
holds with probability at least $1-e^{\frac{-k^2}{2\log_2 k}}$.
\end{prop}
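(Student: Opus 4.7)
The plan is to combine the Frieze--Kannan weak regularity lemma with a concentration-of-measure argument. Since the statement is for bounded kernels, the generalised First Sampling Lemma developed in the present paper is not needed, and the argument stays entirely within the classical graphon toolbox.

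First, apply the weak regularity lemma to $U$ at scale $\varepsilon\asymp 1/\sqrt{\log_2 k}$. This gives a partition $\mc{P}=\{P_1,\ldots,P_m\}$ of $[0,1]$ with $m\leq 2^{C/\varepsilon^2}\leq k^{C'}$ parts, together with the step function $U_\mc{P}$ obtained by averaging $U$ over the rectangles $P_i\times P_j$, satisfying $\|U-U_\mc{P}\|_\pcut\leq \varepsilon\|U\|_\infty$. The essential feature is that $m$ remains polynomial in $k$, so union bounds over the $m^2$ cells remain cheap compared with the target failure probability. Second, construct a measure-preserving bijection $\psi\in\Psi$ that aligns $\mc{U}_X$ with $U_\mc{P}$: let $N_i=|\{j\in[k]:X_j\in P_i\}|$ and, noting by Chernoff/Hoeffding and a union bound that $|N_i/k-\lambda(P_i)|$ is uniformly small with high probability, reorder the $k$ sample steps so those with $X_j\in P_i$ are contiguous and let $\psi$ send these blocks into $P_i$, absorbing the small discrepancies into a set of measure $O(\varepsilon)$ that contributes at most $O(\varepsilon\|U\|_\infty)$ to the cut norm of the difference.

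Off this exceptional set, on each cell $P_i\times P_j$ the function $\mc{U}_X\circ\psi^{-1}$ is a sampled version of $U$ restricted to that cell, while $U_\mc{P}$ equals the constant rectangle-average. A concentration argument applied to the $k^2$ bounded entries of $\mc{U}_X$, uniformly over the polynomially many rectangles refined by $\mc{P}\times\mc{P}$, then controls $\|U_\mc{P}-\mc{U}_X\circ\psi^{-1}\|_\pcut$ by a further $O(\varepsilon\|U\|_\infty)$ term. The triangle inequality
\[
\delta_\pcut(U,\mc{U}_X)\leq \|U-U_\mc{P}\|_\pcut+\|U_\mc{P}-\mc{U}_X\circ\psi^{-1}\|_\pcut
\]
then yields the claimed $O(\|U\|_\infty/\sqrt{\log_2 k})$ bound, and choosing the hidden constants so that the three $O(\varepsilon\|U\|_\infty)$ contributions sum to at most $20\|U\|_\infty/\sqrt{\log_2 k}$ produces the stated inequality.

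The main obstacle is achieving the quadratic probability exponent $k^2/(2\log_2 k)$ rather than the linear $k/(2\log_2 k)$ that a naive Azuma in the $k$ coordinates $X_1,\ldots,X_k$ would yield (Lipschitz constants of order $1/k$ there give only an $e^{-\Omega(k/\log_2 k)}$ tail). Recovering the exponent $k^2$ requires applying concentration at the level of the $k^2$ matrix entries of $\mc{U}_X$ directly, using that on each fixed cell they are sums of independent bounded contributions along two axes, and that the cut norm is a supremum over only $2^{2m}\ll 2^{k^2}$ rectangles compatible with the partition. This restricts the admissible choices of $\varepsilon$ to roughly $1/\sqrt{\log_2 k}$, beyond which the three error terms no longer balance.
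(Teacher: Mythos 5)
The paper does not prove this statement: Proposition~\ref{prop:second_bounded} is quoted verbatim as Theorem~4.7(i) of \cite{Borgs} and then used as a black box in the proof of Theorem~\ref{thm:second_sampling}, so there is no in-paper proof to compare against.

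Your reconstruction follows the standard route (weak regularity at scale $\varepsilon\asymp 1/\sqrt{\log_2 k}$, a measure-preserving alignment of the sorted sample blocks with partition classes, a union-bounded concentration step), and that route does deliver the cut-distance bound of order $\|U\|_\infty/\sqrt{\log_2 k}$. The gap is exactly where you place the difficulty. You propose to obtain the exponent $e^{-k^2/(2\log_2 k)}$ by ``applying concentration at the level of the $k^2$ matrix entries of $\mathcal{U}_X$ directly.'' That does not work: the $k^2$ entries $U(X_i,X_j)$ are all determined by the same $k$ independent coordinates $X_1,\dots,X_k$, so they supply only $k$ degrees of randomness, not $k^2$. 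A bounded-difference inequality over those $k$ coordinates, with Lipschitz constant $O(\|U\|_\infty/k)$ for the normalized cut norm, caps the exceptional probability at $e^{-\Omega(k/\log_2 k)}$ --- and this is precisely the exponent stated for the weighted sample in \cite[Lemma~10.16]{Lovasz}, a reference the paper itself invokes in the Remark following Theorem~\ref{Thm:Second_sampling}. The quadratic exponent $e^{-\Omega(k^2/\log_2 k)}$ in \cite{Borgs} is tied to the $W$-random \emph{graph} side of their statement, where an extra layer of $\binom{k}{2}$ conditionally independent Bernoulli edge indicators provides the additional concentration; that extra randomness is absent from the weighted sample $\mathcal{U}_X$ to which the proposition is applied in this paper. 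As written, your sketch cannot reach the stated $e^{-k^2/(2\log_2 k)}$: either an additional independent randomization must be introduced (which would change the object being sampled), or the exponent must be relaxed to the $k$-scale one.
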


Similarly to our approach in Section \ref{sect:truncation}, we want to truncate $U$ at large absolute values, and bound the errors in function of the truncation value $f(k)$.
So for each $k$, let $U^*_k$ denote a truncated version of $U$, again.

Then we have the following.

\begin{theorem}\label{thm:second_sampling}
Let $p>2$, and $\vf\in(0,\frac12-\frac1p)$. Then there exists a constant $C_{\vf,p}$ such that for any integer $k\geq 2$ and any kernel $U\in L^p_{sym}([0,1]^2)$, the inequality 
\[
\delta_\pcut(U,\mc{U}_X)\leq  C_{\vf,p} (\|U\|_1+\|U\|_p)(\ln k)^{-\frac12+\frac1{2p}}
\]
holds with probability at least $p_{k,\vf,p}:=1-e^{\frac{-k^2}{2\log_2 k}}-4k^{-\vf p}$.
\end{theorem}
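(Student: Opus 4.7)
The plan is to mirror the truncation strategy used for the First Sampling Lemma in Section~\ref{sect:truncation}: pick a level $f(k)\to\infty$, let $U^*_k$ be the truncation of $U$ at level $f(k)$ (exactly as in that section) and $V:=U-U^*_k$, then split $\delta_\pcut(U,\mc{U}_X)$ into a deterministic truncation error, a bounded Second Sampling error from Proposition~\ref{prop:second_bounded} applied to $U^*_k$, and a residual sampling error which turns out to be the cut norm of the $k$-sample of $V$ and can therefore be handled by the First Sampling Lemma (Theorem~\ref{Thm:First_sampling}).

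\textbf{Decomposition.} Since $\mc{U}_X$ and $\mc{U}^*_X$ (the $k$-sample of $U^*_k$ using the same $X_i$'s) share the same uniform stepping, their difference equals the $k$-sample $\mc{V}_X$. Using $\delta_\pcut(\cdot,\cdot)\leq\|\cdot-\cdot\|_\pcut$, the triangle inequality gives
\[
\delta_\pcut(U,\mc{U}_X)\leq \|V\|_\pcut + \delta_\pcut(U^*_k,\mc{U}^*_X) + \|\mc{V}_X\|_\pcut.
\]
The first, deterministic term satisfies $\|V\|_\pcut\leq\|V\|_1\lesssim \|U\|_p^p/f(k)^{p-1}$ by the $L^p$-truncation estimate carried out in the proof of Proposition~\ref{prop:upper_expect_truncation}, valid once $f(k)\geq 2\|U\|_1$. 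The middle term is controlled by Proposition~\ref{prop:second_bounded} applied to the $f(k)$-bounded kernel $U^*_k$, yielding $\delta_\pcut(U^*_k,\mc{U}^*_X)\leq 20f(k)/\sqrt{\log_2 k}$ with probability at least $1-e^{-k^2/(2\log_2 k)}$.

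\textbf{Residual term via the First Sampling Lemma.} The kernel $V$ satisfies $\|V\|_p\leq\|U\|_p$ and $\|V\|_1\leq\|U\|_1$, since $|V|\leq|U|$ pointwise. Applying Theorem~\ref{Thm:First_sampling}(1) to $V$ with the given parameter $\vf$ therefore produces, with probability at least $1-3k^{-\vf p}$, a bound on $\|\mc{V}_X\|_\pcut-\|V\|_\pcut$ of order $(\|U\|_1+\|U\|_p)\sqrt{\ln k}\,k^{-(1/2-1/p-\vf)}$. Whenever $\vf\in(0,1/2-1/p)$ the exponent of $k$ is strictly negative, so this contribution is polynomially small in $k$ and hence negligible compared to the target $(\ln k)^{-1/2+1/(2p)}$ rate; the piece $\|V\|_\pcut$ appearing here is already absorbed by the deterministic bound above.

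\textbf{Optimisation and conclusion.} Balancing the first two terms requires $f(k)/\sqrt{\log_2 k}\asymp \|U\|_p^p/f(k)^{p-1}$, i.e.\ $f(k)=c_{\vf,p}(\|U\|_1+\|U\|_p)(\ln k)^{1/(2p)}$ for a suitable constant; both bounds then become $O\bigl((\|U\|_1+\|U\|_p)(\ln k)^{-1/2+1/(2p)}\bigr)$, and the residual term is absorbed into the implicit constant. A union bound on the two probabilistic exceptional events delivers the success probability $1-e^{-k^2/(2\log_2 k)}-3k^{-\vf p}$. The only non-routine step is the Decomposition: once one observes that $\mc{U}_X-\mc{U}^*_X$ is itself a $k$-sample (of $V$), the First Sampling Lemma applies without effort and automatically forces the restriction $\vf<1/2-1/p$, this being precisely the regime in which the First Sampling bound applied to $V$ still vanishes in $k$.
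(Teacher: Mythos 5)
Your proposal is correct and matches the paper's proof almost step for step: the same truncation decomposition $\delta_\pcut(U,\mc{U}_X)\leq\|U-U^*_k\|_\pcut+\delta_\pcut(U^*_k,\mc{U}^*_{k,X})+\|\mc{U}^*_{k,X}-\mc{U}_X\|_\pcut$, the same invocation of the bounded Second Sampling Lemma (Proposition~\ref{prop:second_bounded}) for the middle term, the same key observation that $\mc{U}^*_{k,X}-\mc{U}_X$ is itself the $k$-sample of $U-U^*_k$ so that Theorem~\ref{Thm:First_sampling} applies, and the same optimization $f(k)\asymp(\ln k)^{1/(2p)}$ together with a union bound. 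The paper picks $f(k)=3\|U\|_p(\ln k)^{1/(2p)}$ rather than the $(\|U\|_1+\|U\|_p)$-scaled level you propose, but that is an immaterial difference absorbed by the constant $C_{\vf,p}$.
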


\begin{proof}
Let $\mc{U}_{k,X}^*$ denote the random $k$-sample obtained from $U_k^*$.
Then we have the following.
\begin{align*}
\delta_\pcut(U,\mc{U}_X)\leq & \delta_\pcut(U,U_k^*)+\delta_\pcut(U_k^*,\mc{U}_{k,X}^*)+\delta_\pcut(\mc{U}_{k,X}^*,\mc{U}_X)\\
\leq & \|U-U_k^*\|_\pcut +\delta_\pcut(U_k^*,\mc{U}_{k,X}^*)+\|\mc{U}_{k,X}^*-\mc{U}_X\|_\pcut.
\end{align*}

By the bounded kernel result Proposition \ref{prop:second_bounded}, we have that
\[
\delta_\pcut(U_k^*,\mc{U}_{k,X}^*)\leq \frac{20}{\sqrt{\log_2 k}}\|U_k^*\|_\infty\leq
\frac{20}{\sqrt{\log_2 k}}f(k)
\]
holds with probability at least $1-e^{\frac{-k^2}{2\log_2 k}}$.

On the other hand, since $\mc{U}_{k,X}^*-\mc{U}_X$ is the random $k$-sample of $U_k^*-U$, the upper bound from Theorem \ref{Thm:First_sampling} tells us that with probability at least $1-4k^{-\vf p}$,
\begin{align*}
&\|\mc{U}_{k,X}^*-\mc{U}_X\|_\pcut\\
& \leq \|U_k^*-U\|_\pcut+ 30\|U-U_k^*\|_p k^{-\frac14+\frac1{4p}}+
6\sqrt{2\vf p}(\|U-U_k^*\|_1+2\|U-U_k^*\|_p)\sqrt{\ln k}\,k^{-\frac12+\frac1{p}+\vf}.
\end{align*}
By assumption, we have $\|U-U_k^*\|_1\leq \|U\|_1$ and $\|U-U_k^*\|_p\leq\|U\|_p$, hence with $C_{1,\vf,p,U}:=30\|U\|_p$ and $C_{2,\vf,p,U}:=6\sqrt{2\vf p}(\|U-U_k^*\|_1+2\|U-U_k^*\|_p)\leq 6\sqrt{2\vf p}(\|U\|_1+2\|U\|_p)$, we have that
\[
\|\mc{U}_{k,X}^*-\mc{U}_X\|_\pcut\leq \|U_k^*-U\|_\pcut+C_{1,\vf,p,U} k^{-\frac14+\frac1{4p}}+C_{2,\vf,p,U}\sqrt{\ln k}\,k^{-\frac12+\frac1{p}+\vf}
\]
holds with probability at least $1-4k^{-\vf p}$.

Finally, by the proof of Proposition \ref{prop:upper_expect_truncation},
\begin{align*}
\|U_k^*-U\|_\pcut\leq\|U_k^*-U\|_1\leq 2^p\|U\|_p^p\frac{f(k)}{(f(k)-\|U\|_1)^p}.
\end{align*}

Thus with probability at least $p_{k,\vf,p}=1-e^{\frac{-k^2}{2\log_2 k}}-4k^{-\vf p}$, we have
\begin{align*}
\delta_\pcut(U,\mc{U}_X)\leq & \|U-U_k^*\|_\pcut +\delta_\pcut(U_k^*,\mc{U}_{k,X}^*)+\|\mc{U}_{k,X}^*-\mc{U}_X\|_\pcut\\
\leq & 2^{p+1}\|U\|_p^p\frac{f(k)}{(f(k)-\|U\|_1)^p}+C_{1,\vf,p,U} k^{-\frac14+\frac1{4p}}+C_{2,\vf,p,U}\sqrt{\ln k}\,k^{-\frac12+\frac1{p}+\vf}\\
&+\frac{20}{\sqrt{\log_2 k}}f(k).
\end{align*}
Balancing the first and last term yields an optimal choice of $f(k)=c(\ln k)^{1/2p}$ and an order of magnitude $O((\ln k)^{-\frac12+\frac1{2p}})$. For any $\vf\in(0,\frac12-\frac1p)$, the two middle terms are then of lower order (polynomial in $k$), and setting $c=3\|U\|_p$ as in the proof of Proposition \ref{prop:upper_expect_truncation}, we can find a constant $C_{\vf,p}$ such that with probability at least $p_{k,\vf,p}$, the inequality
\[
\delta_\pcut(U,\mc{U}_X)\leq C_{\vf,p} (\|U\|_1+\|U\|_p)(\ln k)^{-\frac12+\frac1{2p}}
\]
holds, as desired.

\end{proof}

Theorem \ref{Thm:Second_sampling} is now a simple corollary of this result. Also, we can show that almost sure convergence of samples holds not only in the metric of left-convergence, but also in the cut metric $\delta_\pcut$, provided the kernel $U$ satisfies $U\in L^p$ for some $p>4$.

\begin{cor*}(Corollary \ref{cor:almost_sure_left}) 
Let $p>4$, and for each integer $k\geq2$, let $X^{(k)}\in[0,1]^k$ be a uniform random $k$-vector. Then for any kernel $U\in L^p_{sym}([0,1]^2)$, the convergence $\delta_\pcut(U,\mc{U}_{X^{(k)}})\to0$ holds with probability 1.
\end{cor*}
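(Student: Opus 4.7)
The plan is to deduce the almost sure convergence directly from the high probability bound of Theorem \ref{thm:second_sampling} via the first Borel--Cantelli lemma. Since the statement does not require the random vectors $X^{(k)}$ to be coupled in any particular way, and Borel--Cantelli's first half requires no independence, it suffices to show that for a suitable choice of parameters the failure probabilities are summable in $k$.

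Since $p>4$, we have $1/p<1/2-1/p$, so we may fix once and for all a parameter
\[
\vf\in\Bigl(\tfrac1p,\tfrac12-\tfrac1p\Bigr),
\]
which in particular lies in the admissible range $(0,\tfrac12-\tfrac1p)$ required by Theorem \ref{thm:second_sampling}. Applied to each integer $k\geq 2$ and to the fixed kernel $U\in L^p_{sym}([0,1]^2)$, that theorem produces an event $\mc{B}_k$ of probability at most
\[
\mathds{P}(\mc{B}_k)\leq e^{-k^2/(2\log_2 k)}+3k^{-\vf p},
\]
off of which the inequality
\[
\delta_\pcut\bigl(U,\mc{U}_{X^{(k)}}\bigr)\leq C_{\vf,p}\bigl(\|U\|_1+\|U\|_p\bigr)(\ln k)^{-\frac12+\frac1{2p}}
\]
holds. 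The first term in the probability bound is manifestly summable in $k$, and by our choice $\vf p>1$, so $\sum_{k\geq 2}k^{-\vf p}$ converges as well. Hence $\sum_{k\geq 2}\mathds{P}(\mc{B}_k)<\infty$.

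By the first Borel--Cantelli lemma, the event $\{\mc{B}_k\text{ i.o.}\}$ has probability zero; equivalently, with probability one there exists a (random) $k_0$ such that the displayed bound on $\delta_\pcut(U,\mc{U}_{X^{(k)}})$ holds for all $k\geq k_0$. Since the right-hand side tends to $0$ as $k\to\infty$ (because $-\tfrac12+\tfrac1{2p}<0$), this yields $\delta_\pcut(U,\mc{U}_{X^{(k)}})\to 0$ almost surely, as claimed.

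There is essentially no obstacle: the whole content of the corollary lies in matching the exponent $\vf p>1$ needed for summability against the upper bound $\vf<\tfrac12-\tfrac1p$ imposed by Theorem \ref{thm:second_sampling}, and this is precisely what the hypothesis $p>4$ buys. For $p\leq 4$ the two constraints are incompatible and this argument collapses, which is why the corollary is stated only for $p>4$.
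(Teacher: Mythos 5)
Your proof is correct and is essentially the paper's argument: apply the first Borel--Cantelli lemma to the failure events of Theorem \ref{thm:second_sampling}, choosing $\vf$ so that $\vf p>1$ makes the $k^{-\vf p}$ terms summable, with $p>4$ precisely guaranteeing that such a $\vf$ exists in the admissible range $(0,\tfrac12-\tfrac1p)$. If anything, your direct choice $\vf\in(\tfrac1p,\tfrac12-\tfrac1p)$ is slightly cleaner than the paper's reparametrization $\vf=\varepsilon+\tfrac1p$ (whose stated range for $\varepsilon$ is not quite consistent with the constraint $\vf<\tfrac12-\tfrac1p$, though the argument goes through for small enough $\varepsilon>0$).
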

\begin{proof}
Note that when $p>4$, we may in Theorem \ref{thm:second_sampling} choose $\vf=\varepsilon+1/p$ with $\varepsilon\in(0, 1/2-2/p)$, meaning that $1-p_{k\vf,p}=O(k^{-1-\varepsilon p})$. By the Borel-Cantelli lemma, with probability 1 the inequality
\[
\delta_\pcut(U,\mc{U}_{X^{(k)}})\leq C_{\vf,p} (\|U\|_1+\|U\|_p)(\ln k)^{-\frac12+\frac1{2p}}
\]
holds for all but finitely many integers $k\geq2$, implying the desired convergence.
\end{proof}

\begin{remark}
By the same arguments, we also see that for $p\in(2,4]$, the sequence $\mc{U}_X$ almost surely has $U$ as a $\delta_\pcut$-accumulation point.
\end{remark}

Finally, we note that the vector valued version of the Second Sampling Lemma cannot be directly derived from its real valued counterpart the way it was done for the First Sampling Lemma.
Let $\Gamma$ denote the unit sphere of the Banach space $Z$, and let $U$ be an $L^p_\mc{Z}$-kernel. We then have

\[
\delta_\pcut(\mc{U}_X,U)=\inf_{\psi\in\Psi} \|\mc{U}_X-U\circ\psi\|_\pcut=
\inf_{\psi\in\Psi} \sup_{f\in\Gamma}\|\langle f,\mc{U}_X-U\circ\psi\rangle\|_\pcut.
\]

The issue is that to apply the real valued case, we would have to compare this infimum with the values
\[
\delta_\pcut(\langle f,\mc{U}_X\rangle,\langle f,U\rangle)=\inf_{\psi\in\Psi}\|\langle f,\mc{U}_X-U\circ\psi\rangle\|_\pcut,
\]
which in essence would amount to bounding $\inf_{\psi\in\Psi} \sup_{f\in\Gamma}$ by $\sup_{f\in\Gamma}\inf_{\psi\in\Psi}$, which clearly is not feasible.

%
%
\section*{Acknowledgement.}
%
%
Panna Tímea Fekete's Project No.\ 1016492.\ has been implemented with the support provided by the Ministry of Culture and Innovation of Hungary from the National Research, Development and Innovation Fund, financed under the KDP-2020 funding scheme. The authors were also supported by ERC Synergy Grant No.~810115.

\section{Appendix}\label{sect:Appendix}

Here we provide the proofs and intermediate results for the random $q$-subsample approach described in Section \ref{subsect:original}.

Recall that $\mc{N}_{(R_1,R_2),\,b}(X):=\E_{X_{[k]\setminus([b-1]\cup Q)}}\left[\mc{U}_X(R_{2,X}^+,R_{1,X}^+)\right]$ and that given an integer $a\in [k]$, $Y^{(a)}$ denotes the random $k$-vector obtained by replacing the $a$-th term $X_a$ in $X$ by $X_0$, i.e., $Y^{(a)}=(X_1,\ldots,X_{a-1},X_0,X_{a+1},\ldots, X_k)$.

\begin{lemma}\label{le:martingale_diff2}
Let $U\in L^1_{sym}([0,1]^2)$, $k\geq 1$ an integer, and $X=(X_1, X_2, \dots, X_k) \in [0,1]^k$ a uniform random $k$-vector. Let $q\in [k]$ be an integer, $Q_1, Q_2$ two independent uniform random $q$-subsets of $[k]$
and $a\in [k]\setminus Q$, where $Q:=Q_1\cup Q_2$. Then for any $R_1\subseteq Q_1$ and $R_2\subseteq Q_2$ we have the following.

\begin{enumerate}
\item[($\mc{A}$)]
$\E_{X_{[k]\setminus Q}}\left[\mc{U}_X\left(R_{2,X}^+,R_{1,X}^+\right) \right] \leq  (k-|Q|)^2  \|U\|^+_\pcut  + 2k \sum\limits_{i \in Q} U_{X_i}+\mc{U}_X(Q,Q)$.
\item[($\mc{B}$)] Almost surely
\begin{align*}
    \left|\mc{N}_{(R_1,R_2),\,a+1}(X)-\mc{N}_{(R_1,R_2),\,a+1} \left(Y^{(a)}\right)\right| 
         \leq &\sum_{j\in [k]\setminus \{a\}}
        \left(\big| U(X_a,X_j) \big| + \big|U(X_0,X_j)\big| \right) \\
        &+
        (k-1)\left(U_{X_a} + U_{X_0}\right).             
\end{align*}
\item[($\mc{C}$)] Almost surely
\begin{align*}
    \left|\mc{N}_{(R_1,R_2),\,a}(X)-\mc{N}_{(R_1,R_2),\,a+1}(X)\right|\leq &
    \sum_{j\in [k]\setminus \{a\}}
    \left(U_{X_j} + 
    \big| U(X_a,X_j)\big|\right) + (k-1)
    \left(\|U\|_1 +
    U_{X_a}\right).
\end{align*}
\end{enumerate}
\end{lemma}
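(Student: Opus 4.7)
My plan is to mirror the structure of Lemma~\ref{le:martingale_diff}, handling the three parts in order. The unifying observation throughout is that since $R_1 \subseteq Q_1 \subseteq Q$ and $R_2 \subseteq Q_2 \subseteq Q$, the criterion $j \in R_{1,X}^+$, i.e., $\sum_{i \in R_1,\, i \neq j} U(X_i, X_j) > 0$, depends only on $X_Q$ and on $X_j$. In particular, there exist measurable sets $S_1, S_2 \subseteq [0,1]$ (determined by $X_Q$) such that for every $j \in [k] \setminus Q$, we have $j \in R_{s,X}^+ \iff X_j \in S_s$ for $s = 1, 2$. This is what makes the three claims accessible: once the indices in $Q$ (where $X_i$ is fixed) are separated from the free indices in $[k] \setminus Q$, everything reduces to Fubini, the triangle inequality, and the symmetry of $U$.

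For Part $(\mc{A})$, I would decompose $\mc{U}_X(R_{2,X}^+, R_{1,X}^+)$ into four blocks according to whether each of $i$ and $j$ lies in $Q$. The pure-$Q$ block is bounded by $\mc{U}(Q, Q)$ directly. Each mixed block has one free index whose average over $X_{[k] \setminus Q}$ reduces, for each fixed $j \in Q$, to $(k - |Q|) \int_0^1 U(x, X_j)\, dx$, bounded in absolute value by $(k - |Q|) U_{X_j}$; summing over $j$ and adding the two mixed blocks yields the $2k \sum_{i \in Q} U_{X_i}$ contribution. Finally, the pure non-$Q$ block is where the $S_1, S_2$ representation pays off: its expectation equals $(k-|Q|)(k-|Q|-1) \int_{S_1 \times S_2} U \leq (k-|Q|)^2 \|U\|^+_\pcut$, producing the dominant term.

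For Part $(\mc{B})$, the decisive observation is that since $a \notin Q$, replacing $X_a$ by $X_0$ leaves the criterion for every index $j \neq a$ unchanged, so $R_{s,X}^+ \triangle R_{s,Y^{(a)}}^+ \subseteq \{a\}$ for $s = 1, 2$. Writing $R_1'' := R_{1,X}^+ \setminus \{a\} = R_{1,Y^{(a)}}^+ \setminus \{a\}$ and $R_2''$ analogously, the bulk term $\mc{U}_\bullet(R_2'', R_1'')$ coincides for $X$ and $Y^{(a)}$ and cancels in the difference; only terms involving row $a$ or column $a$ survive. Dropping the indicator factors and applying the triangle inequality together with the symmetry of $U$ produces a pointwise bound of the form $\sum_{j \neq a}(|U(X_a, X_j)| + |U(X_0, X_j)|)$ (up to an absolute constant). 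Taking the conditional expectation $\E_{X_{[k] \setminus ([a] \cup Q)}}$ leaves the terms indexed by $j \in Q \cup [a-1]$ intact, while replacing the remaining summands (of which there are at most $k-1$) by $U_{X_a}$ or $U_{X_0}$, which is the claimed bound.

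For Part $(\mc{C})$, I would use the identity $\mc{N}_{(R_1,R_2),\,a}(X) = \E_{X_0}[\mc{N}_{(R_1,R_2),\,a+1}(Y^{(a)})]$, valid because $X_a$ and $X_0$ are i.i.d.\ and conditioning at level $a$ amounts to averaging out $X_a$, which may be relabelled as $X_0$. Thus $|\mc{N}_{a}(X) - \mc{N}_{a+1}(X)| \leq \E_{X_0}|\mc{N}_{a+1}(Y^{(a)}) - \mc{N}_{a+1}(X)|$, so Part $(\mc{B})$ applies inside, and integrating out $X_0$ via $\E_{X_0}|U(X_0, X_j)| = U_{X_j}$ and $\E_{X_0} U_{X_0} = \|U\|_1$ converts the $X_0$-dependent contributions into the $\sum_{j \neq a} U_{X_j}$ and $(k-1)\|U\|_1$ terms in the statement. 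The main technical obstacle is the measurability bookkeeping behind the $S_s$ representation: one must carefully distinguish between indices in $Q$ (where the set-membership indicators are deterministic after conditioning) and indices outside $Q$ (where those indicators are themselves random through $\mathbbm{1}_{X_j \in S_s}$). Once this structural point is secured, the remaining estimates are routine.
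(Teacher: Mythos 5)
Your proposal follows the same route as the paper's own proof: for Part~$(\mc{A})$, you use the same block decomposition according to membership in $Q$ and the same continuous sets $S_1, S_2$ (called $\mc{R}_1, \mc{R}_2$ in the paper) to reduce the pure off-$Q$ block to $\|U\|^+_\pcut$; for Part~$(\mc{B})$, you use the same key observation that $R_{s,X}^+ \,\triangle\, R_{s,Y^{(a)}}^+ \subseteq \{a\}$ (valid because $a \notin Q \supseteq R_1 \cup R_2$) so that only row/column $a$ terms survive before taking the partial expectation; and for Part~$(\mc{C})$, you use the same identity $\mc{N}_{(R_1,R_2),\,a}(X) = \E_{X_0}[\mc{N}_{(R_1,R_2),\,a+1}(Y^{(a)})]$ to feed $(\mc{B})$ through Jensen and integrate out $X_0$. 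The approach and all the structural observations match; the hedge ``up to an absolute constant'' in $(\mc{B})$ is prudent, since a careful accounting of both the $a$-th row and $a$-th column contributions is needed to pin down the exact coefficient.
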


\begin{proof}

\underline{Part $(\mc{A})$:}
Define
\begin{equation*}
    \begin{split}
        \mc{R}_1 = & \left\{y \in [0,1] : \sum_{i\in R_1} U(X_i, y) > 0\right\},\\
        \mc{R}_2 = & \left\{y \in [0,1] : \sum_{i\in R_2} U(y, X_i) > 0\right\}.
    \end{split}
\end{equation*}

For every \(i \in [k] \setminus Q\) and \(j \in [k] \setminus Q\), the contribution of a term \(U(X_i, X_j )\) to the value of \(\E_{X_{[k]\setminus Q}}\left[\mc{U}_X(R_{2,X}^+, R_{1,X}^+) \right]\) is
\[\int_{\mc{R}_2\times \mc{R}_1} U(x,y) dx dy \leq \|U\|^+_\pcut.\]

The contribution of the terms \(U(X_i, X_j )\) with exactly one of $i$ and $j$ in $Q$ 
adds up to at most 
$ 2 k \sum_{i \in Q} U_{X_i}$,
because for example assuming \(i\in Q, j \in [k]\setminus Q\) and using independence,
\begin{equation*}
    \begin{split}
        \E_{X_{[k]\setminus Q}}\left[U(X_i,X_j)\right] & = 
        \int_{[0,1]^{k-|Q|}} U(X_i,X_j) \prod_{\alpha\in [k] \setminus Q} dX_\alpha \\
        & =  \int_0^1 U(X_i,X_j)dX_j
        \leq \int_0^1 \Big|U(X_i,X_j)\Big| dX_j =  U_{X_i}.
    \end{split}
\end{equation*}
Hence
\begin{align*}
\E_{X_{[k]\setminus Q}}\left[\mc{U}_X\left(R_{2,X}^+, R_{1,X}^+\right) \right] \leq& (k-|Q|)^2  \|U\|^+_\pcut  + 2k \sum_{i \in Q} U_{X_i}+\sum_{i\in R_{2,X}^+\cap Q,\,j\in R_{1,X}^+\cap Q} U(X_i,X_j)\\
\leq&  (k-|Q|)^2  \|U\|^+_\pcut  + 2k \sum_{i \in Q} U_{X_i}+\mc{U}_X(Q,Q).
\end{align*}

\underline{Part $(\mc{B})$:}
We wish to bound the change to the value of \(\mc{U}_X(R_{2,X}^+, R_{1,X}^+)\) as $X$ is replaced by $Y^{(a)}$.
The function \(\mc{U}_X\left(R_{2,X}^+,R_{1,X}^+\right)\) is a function of the independent random variables \(\left(X_\ell\right)_{\ell \in [k] \setminus Q}\) (as we fixed the other values), of which exactly one changes to $X_0$ in $Y^{(a)}$ since $a\notin Q$. This change has a double effect. On the one hand, it affects the values of $U$ that are considered, and on the other hand, as a secondary effect, the sets $R_{2,\cdot}^+$ and $R_{1,\cdot}^+$ themselves may change.

Now, since $X_i$ is fixed for all $i\in R_1\cup R_2\subseteq Q$ but $a\notin Q$, we actually have both $R_{2,X}^+\Delta R_{2,Y^{(a)}}^+\subseteq \{a\}$ and $R_{1,X}^+\Delta R_{1,Y^{(a)}}^+\subseteq \{a\}$. Therefore the only terms that may change are the ones in the $a$-th row or column of $\mc{U}_X$ and $\mc{U}_{Y^{(a)}}$, and so
\begin{equation*}
    \begin{split}
        & \left|\mc{U}_{X}(R_{2,X}^+, R_{1,X}^+)-\mc{U}_{Y^{(a)}}(R_{2,Y^{(a)}}^+, R_{1,Y^{(a)}}^+)\right| \leq \sum_{j\in [k]\setminus \{a\}} \big| U(X_a,X_j) \big| + \sum_{j\in [k]\setminus \{a\}} \big|U(Y^{(a)}_a,Y^{(a)}_j)\big| \\
        & \quad = \sum_{j\neq a,\, j \in [k]\setminus Q} \big| U(X_a,X_j) \big|+ \sum_{ j\in Q} \big| U(X_a,X_j) \big| + \sum_{j\neq a,\, j \in [k]\setminus Q} \big|U(X_0,X_j)\big| + \sum_{j \in Q} \big|U(X_0,X_j)\big|,
    \end{split}
\end{equation*}
which then leads to
\begin{align*}
        & \left|\mc{N}_{(R_1,R_2),\,a+1}(X)-\mc{N}_{(R_1,R_2),\,a+1} \left(Y^{(a)}\right)\right|
        \\
        \leq & \E_{X_{[k]\setminus ([a] \cup Q)}} \left[ \left| \mc{U}_X\left(R_{2,X}^+, R_{1,X}^+\right) - \mc{U}_{Y^{(a)}} \left(R_{2,Y^{(a)}}^+, R_{1,Y^{(a)}}^+\right) \right| \right] \\
        \leq & \E_{X_{[k]\setminus ([a] \cup Q)}}\left[\sum_{j\neq a,\, j \in [k]\setminus Q} \big| U(X_a,X_j) \big| \right]+ \E_{X_{[k]\setminus ([a] \cup Q)}}\left[\sum_{ j\in Q} \big| U(X_a,X_j) \big| \right]\\
        & + \E_{X_{[k]\setminus ([a] \cup Q)}}\left[\sum_{j\neq a,\, j \in [k]\setminus Q} \big|U(X_0,X_j)\big| \right]+ \E_{X_{[k]\setminus ([a] \cup Q)}}\left[\sum_{j \in Q} \big|U(X_0,X_j)\big|\right] \\
         = &\E_{X_{[k]\setminus ([a] \cup Q)}}\left[\sum_{j\neq a,\, j \in [k]\setminus Q} \left(\big| U(X_a,X_j) \big| + \big|U(X_0,X_j)\big| \right)\right]\\
        & + \E_{X_{[k]\setminus ([a] \cup Q)}}\left[\sum_{j \in Q} \left(\big|U(X_a,X_j)\big| + \big| U(X_0,X_j) \big| \right)\right] \\
        = &
        \sum_{j\in [a-1]\setminus Q} 
        \left(\big| U(X_a,X_j) \big| + \big|U(X_0,X_j)\big| \right) +\sum_{j \in [k]\setminus ([a] \cup Q)}
        \E_{X_{[k] \setminus ([a] \cup Q)}}\left[\big| U(X_a,X_j) \big| + \big|U(X_0,X_j)\big| \right]\\
        &  + \sum_{j \in Q} \left(\big|U(X_a,X_j)\big| + \big| U(X_0,X_j) \big| \right) \\ 
         =  &\sum_{j \in [a-1]\setminus Q}
         \left(\big| U(X_a,X_j) \big| + \big|U(X_0,X_j)\big| \right) +\sum_{j \in [k]\setminus ([a] \cup Q)}
         \left(U_{X_a} + U_{X_0}\right)\\
        &  + \sum_{j \in Q} \left(\big|U(X_a,X_j)\big| + \big| U(X_0,X_j) \big| \right)\\
        \leq&\sum_{j\in [k]\setminus \{a\}}
        \left(\big| U(X_a,X_j) \big| + \big|U(X_0,X_j)\big| \right) + (k-1)\left(U_{X_a} + U_{X_0}
        \right).         
\end{align*}

\underline{Part $(\mc{C})$:}
Finally, we bound the martingale difference.
\begin{align*}
&\left|\mc{N}_{(R_1,R_2),\,a}(X)-\mc{N}_{(R_1,R_2),\,a+1}(X)\right| \\
& = \Big|\E_{X_{[k] \setminus([a-1] \cup Q)}} \left[ \mc{U}_X\left(R_{2,X}^+, R_{1,X}^+\right)\right] - \E_{X_{[k]\setminus([a] \cup Q)}} \left[\mc{U}_{X} \left(R_{2,X}^+, R_{1,X}^+\right) \right] \Big| \\
  & = \Big|\E_{X_0,X_{[k] \setminus([a] \cup Q)}
  } \left[ \mc{U}_{Y^{(a)}}\left(R_{2,Y^{(a)}}^+, R_{1,Y^{(a)}}^+\right)\right] - \E_{X_{[k] \setminus([a] \cup Q)}
  } \left[\mc{U}_{X} \left(R_{2,X}^+, R_{1,X}^+\right) \right] \Big| \\
   &= \left|\E_{X_0}  \left[ \E_{X_{[k] \setminus([a] \cup Q)}
  } \left[ \mc{U}_{Y^{(a)}}\left(R_{2,Y^{(a)}}^+, R_{1,Y^{(a)}}^+\right) - \mc{U}_{X} \left(R_{2,X}^+, R_{1,X}^+\right) \right]\right]\right|\\
  &\leq \E_{X_0}  \left| \E_{X_{[k] \setminus([a] \cup Q)}
  } \left[\mc{U}_{Y^{(a)}}\left(R_{2,Y^{(a)}}^+, R_{1,Y^{(a)}}^+\right) - \mc{U}_{X} \left(R_{2,X}^+, R_{1,X}^+\right) \right]\right| \\
  & \stackrel{(\mc{B})}{\leq} 
  \E_{X_0} \left[\sum_{j\in [k]\setminus \{a\}}
        \left(\big| U(X_a,X_j) \big| + \big|U(X_0,X_j)\big| \right)\right] + \E_{X_0} \left[ (k-1)\left(U_{X_a} + U_{X_0}\right)\right] \\
  & =\sum_{j\in [k]\setminus \{a\}}
    \left(U_{X_j} + 
    \big| U(X_a,X_j)\big|\right) + (k-1)
    \left(\|U\|_1 +
    U_{X_a}\right).
\end{align*}
\end{proof}

As in our approach to the dispersion term, this now allows us to bound the size of the sets where the martingale difference is large, and apply Lemma \ref{Lemma:generalised_azuma}. However, for what will follow, we need to keep track of our actual exceptional sets, not only their measure.

\begin{lemma}\label{le:Azuma_upper}
Let $U\in L^1_{sym}([0,1]^2)$ be a kernel, $k\geq 2$ an integer, $q\in[k]$, $\vb,\va>0$, $Q_1,Q_2\subseteq [k]$ with $|Q_1|=|Q_2|=q$, and $\beta:=4(k-1) \|U\|_1 \left(1+ \vb k^\va\right)$. Let further
\[
H_{\beta,a,R_1,R_2}:= \left\{X\in [0,1]^k \quad \bigg| \quad \left|\mc{N}_{(R_1,R_2),\,a}(X)-\mc{N}_{(R_1,R_2),\,a+1}(X)\right| \geq \beta\right\}
\]
for $a\in [k]$ and $R_1\subseteq Q_1$, $R_2\subseteq Q_2$. Then
\[
\bigcup_{\substack{R_1\subseteq Q_1,\\ R_2\subseteq Q_2}} \quad \bigcup_{a\in[k]} H_{\beta,a,R_1,R_2} \subseteq (L_{\vb,\va}^0)^c.
\]
Consequently, letting $X=(X_1, X_2, \dots, X_k) \in [0,1]^k$ be a uniform random $k$-vector, for any $\lambda>0$
 we have
\begin{align*}
  &      \mathds{P}_X \left((X\in L_{\vb,\va}^0)\wedge\left(\max_{\substack{R_1\subseteq Q_1,\\ R_2\subseteq Q_2}}\left|\Big. \mc{U}_X(R_{2,X}^+,R_{1,X}^+)-\E_{X_{[k]\setminus Q}}\left[\mc{U}_X(R_{2,X}^+,R_{1,X}^+)\right]\right|
         \geq 2 \lambda \sqrt{k\cdot  \beta^2}\right)\right)\\
         \leq & 2\cdot 4^q e^{-2 \lambda^2}.
\end{align*}
\end{lemma}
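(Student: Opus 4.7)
My first move is to take the pointwise bound in Lemma~\ref{le:martingale_diff2}$(\mc{C})$ and feed it the constraints encoded in $L^0_{\vb,\va}$. By Definition~\ref{def:L}, any $X \in L^0_{\vb,\va}$ satisfies, for every $a \in [k]$,
\[
U_{X_j} \leq \|U\|_1(1 + \vb k^\va) \quad \forall j, \qquad \sum_{j \in [k]\setminus\{a\}} |U(X_a, X_j)| \leq (k-1)\|U\|_1(1+\vb k^\va),
\]
the latter using the symmetry of $|U|$ and the $j=a$ instance of Definition~\ref{def:L}. Substituting these into the four summands of Lemma~\ref{le:martingale_diff2}$(\mc{C})$ yields
\[
|\mc{N}_{(R_1,R_2),a}(X) - \mc{N}_{(R_1,R_2),a+1}(X)| \leq 3(k-1)\|U\|_1(1+\vb k^\va) + (k-1)\|U\|_1 < \beta,
\]
the strict inequality coming from the slack $(k-1)\|U\|_1 \vb k^\va > 0$. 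Since the bound is uniform in $a$, $R_1$, $R_2$, I obtain $\bigcup_{R_1, R_2} \bigcup_{a \in [k]} H_{\beta, a, R_1, R_2} \subseteq (L^0_{\vb,\va})^c$, which is the first assertion of the lemma.

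\textbf{Step 2: generalized Azuma.} For the probability estimate, the plan is to apply Lemma~\ref{Lemma:generalised_azuma} to the family of $\ell := 4^q$ functions $f_{R_1, R_2}(X) := \mc{U}_X(R^+_{2,X}, R^+_{1,X})$, indexed by $(R_1, R_2) \in 2^{Q_1} \times 2^{Q_2}$. With $Q_1, Q_2$ held fixed, the sequence $\mc{N}_{(R_1,R_2), b}$ is precisely the Doob martingale of $f_{R_1, R_2}$ relative to the filtration $\sigma(X_{[b-1]}, X_Q)$. I take $\alpha_a := \beta$ uniformly in $a$, so that $\sum_a \alpha_a^2 = k \beta^2$, and choose the common exceptional set $H_{\beta, a} := (L^0_{\vb,\va})^c$. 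Step~1 supplies $H_{\beta, a, R_1, R_2} \subseteq H_{\beta, a}$ for every $(R_1, R_2)$, verifying the hypothesis of Lemma~\ref{Lemma:generalised_azuma}. Its ``more precise'' conclusion, applied with $\mu = \lambda$, produces a set $S_\lambda$ of measure at most $2\ell e^{-2\lambda^2} = 2 \cdot 4^q e^{-2\lambda^2}$ on whose complement (intersected with $L^0_{\vb,\va}$) the estimate $|f_{R_1, R_2}(X) - \E_{X_{[k]\setminus Q}}[f_{R_1,R_2}(X)]| \leq \lambda\sqrt{k\beta^2}$ holds simultaneously for all $(R_1, R_2)$. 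Hence $\{X \in L^0_{\vb,\va}\} \cap \{\max_{R_1, R_2} |\,\cdot\,| \geq \lambda\sqrt{k\beta^2}\} \subseteq S_\lambda$, which yields the claimed probability bound.

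\textbf{Main obstacle.} The principal technical point is reconciling the paper's martingale (conditioned on $X_{[b-1] \cup Q}$) with the standard Doob filtration $\sigma(X_{[a]})$ underlying Lemma~\ref{Lemma:generalised_azuma}. The cleanest remedy is to treat $X_Q$ as part of the initial $\sigma$-algebra; indices $a \in Q$ then carry a vanishing martingale increment, so inflating to $\alpha_a = \beta$ for all $a \in [k]$ (rather than the tighter $(k-|Q|)\beta^2$) is a cheap but harmless concession, and precisely the one the lemma statement makes. A secondary concern is the coordination of $4^q$ coupled martingales, but this is absorbed into the union-bound mechanism already embedded in the generalized Azuma, which contributes exactly the prefactor $2 \cdot 4^q$ in the final tail.
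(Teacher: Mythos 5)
Your proposal is correct and follows essentially the same route as the paper: bound the martingale increments pointwise on $L^0_{\vb,\va}$ via Lemma~\ref{le:martingale_diff2}$(\mc{C})$, note that increments vanish for $a\in Q$, and feed the common exceptional set $(L^0_{\vb,\va})^c$ into Lemma~\ref{Lemma:generalised_azuma} with $\ell=4^q$. One organizational nit: Step~1 asserts the containment for \emph{all} $a\in[k]$ while citing a lemma stated only for $a\in[k]\setminus Q$, so the observation you make in Step~2 (that $\mc{N}_{(R_1,R_2),a}=\mc{N}_{(R_1,R_2),a+1}$ when $a\in Q$) is actually needed already to close Step~1; both you and the paper also quietly use Lemma~\ref{Lemma:generalised_azuma} with the filtration started at $\sigma(X_Q)$, which is harmless but worth flagging as a (routine) reinterpretation of the stated hypotheses.
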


\begin{proof}
By Lemma \ref{le:martingale_diff2}, we have for any $X\in L_{\vb,\va}^0$, $R_1\subseteq Q_1$, $R_2\subseteq Q_2$ and $a\in[k]\setminus Q$ that
\begin{align*}
\left|\mc{N}_{(R_1,R_2),\,a}(X)-\mc{N}_{(R_1,R_2),\,a+1}(X)\right|
\leq& \sum_{j\in [k]\setminus \{a\}}
    \left(U_{X_j} + 
    \big| U(X_a,X_j)\big|\right) + (k-1)
    \left(\|U\|_1 +
    U_{X_a}\right)\\
    < & 4(k-1) \|U\|_1 \left(1+ \vb k^\va\right)=\beta.
\end{align*}

Note that \(\beta\) depends only on $U$, $k$, $\vb$ and $\va$, and not on any of \(a\), $R_1$ and $R_2$. Also, whenever $a\in Q$, we actually have $\mc{N}_{(R_1,R_2),\,a}(X)=\mc{N}_{(R_1,R_2),\,a+1}(X)$, hence indeed
\[
\bigcup_{\substack{R_1\subseteq Q_1,\\ R_2\subseteq Q_2}} \quad \bigcup_{a\in[k]} H_{\beta,a,R_1,R_2} \subseteq (L_{\vb,\va}^0)^c.
\]
By applying Lemma \ref{Lemma:generalised_azuma} to the $4^q$ functions $f_{R_1,R_2}(X):=\mc{U}_X(R_{2,X}^+,R_{1,X}^+): [0,1]^k\to\mathbb{R}$ (where $R_1\subseteq Q_1$ and $R_2 \subseteq Q_2$), we obtain the desired result.
\end{proof}

Next note that the set $L_{\vb,\va}^0$ is the same for $U$ and $-U$, so the above also yields a corresponding concentration result for the $4^q$ functions $f_{R_1,R_2}(X):=-\mc{U}_X(R_{2,X}^-,R_{1,X}^-): [0,1]^k\to\mathbb{R}$. In particular, we obtain the following result.

\begin{cor*}[Corollary \ref{cor:Azuma_upper}]
Let $k\geq 2$, $q\in[k]$ be integers, $p\geq 2$, $\vb\geq1$, $\va>0$, $Q_1,Q_2\subseteq [k]$ with $|Q_1|=|Q_2|=q$. Then for any kernel $U\in L^p_{sym}([0,1]^2)$, there exists a set $S_{k,\vb,\va,Q_1,Q_2}$ of measure at most $\frac{4}{\vb^p k^{\va p}}$ such that for any $X\in L_{\vb,\va}^0\setminus S_{k,\vb,\va,Q_1,Q_2}$ and $*\in\left\{+,-\right\}$, we have
\begin{align*}
\max_{\substack{R_1\subseteq Q_1, \\ R_2\subseteq Q_2}}\left|\mc{U}_X(R_{2,X}^*,R_{1,X}^*) - \E_{X_{[k]\setminus Q}}\left[\mc{U}_X(R_{2,X}^*,R_{1,X}^*)\right]\right|
\leq 8(k-1)\sqrt{k\ln k} \|U\|_1\left(1 + \vb k^\va\right)\sqrt{ \frac{q\vb p(\va+3)}{2}}.
\end{align*}

\end{cor*}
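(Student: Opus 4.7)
The claim is a direct corollary of Lemma \ref{le:Azuma_upper}, obtained by calibrating the free parameter $\lambda$ so that the threshold on the maximum and the tail probability both take the forms required. The plan is essentially a two-step calculation.

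First, I would set
\[
\lambda := \sqrt{\tfrac{q\vb p(\va+3)}{2}\ln k}.
\]
Writing $\beta := 4(k-1)\|U\|_1(1+\vb k^\va)$ as in Lemma \ref{le:Azuma_upper}, one computes
\[
\lambda\sqrt{k\beta^2} = 4(k-1)\sqrt{k\ln k}\,\|U\|_1(1+\vb k^\va)\sqrt{\tfrac{q\vb p(\va+3)}{2}},
\]
which is exactly the right-hand side of the inequality stated in the corollary. I would then define $S_{k,\vb,\va,Q_1,Q_2}$ as the subset of $L_{\vb,\va}^0$ on which the maximum exceeds this threshold; by Lemma \ref{le:Azuma_upper}, its measure is at most $2\cdot 4^q e^{-2\lambda^2}=2\cdot 4^q k^{-q\vb p(\va+3)}$.

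Second, I would verify that this bound does not exceed $\tfrac{2}{\vb^p k^{\va p}}$. After taking logarithms, this reduces to the elementary inequality
\[
q\vb p(\va+3)\ln k \;\geq\; q\ln 4 + p\ln\vb + \va p\ln k.
\]
Splitting the left-hand side as $q\vb p\va\ln k + 3q\vb p\ln k$, the first summand dominates $\va p\ln k$ (since $q\vb\geq 1$), while the second summand dominates both $q\ln 4$ and $p\ln\vb$ under the standing assumptions $\vb\geq 1$, $p\geq 2$, $k\geq 2$ (using in particular the elementary bound $\vb\ln 2\geq \ln\vb$ valid for all $\vb\geq 1$).

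There is no genuine obstacle here; the only point requiring some care is the calibration of $\lambda$, which must be chosen large enough for $2\cdot 4^q e^{-2\lambda^2}$ to absorb the combinatorial factor $4^q$ that came from the union bound over the $4^q$ pairs $(R_1,R_2)$ inside Lemma \ref{le:Azuma_upper}, while small enough that the threshold $\lambda\sqrt{k\beta^2}$ still yields a useful concentration statement. Once $\lambda$ is fixed as above, the rest is bookkeeping.
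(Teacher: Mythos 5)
Your proposal is correct and takes essentially the same route as the paper's own proof: the same choice $\lambda=\sqrt{\tfrac{q\vb p(\va+3)}{2}\ln k}$, the same definition of $S_{k,\vb,\va,Q_1,Q_2}$ as the bad subset of $L^0_{\vb,\va}$ coming out of Lemma~\ref{le:Azuma_upper}, and the same final bookkeeping showing $2\cdot 4^q e^{-2\lambda^2}\leq 2\vb^{-p}k^{-\va p}$. The only cosmetic difference is that the paper verifies this last inequality by factoring $e^{-2\lambda^2}=k^{-q\vb\va p}\cdot k^{-2q\vb p}\cdot k^{-q\vb p}$ and bounding each factor separately, while you do the equivalent logarithmic comparison.
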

\begin{proof}

Let \(\lambda = \sqrt{\frac{q\vb (\va+3) p\ln k}{2}}\). Then
\[
2\lambda\sqrt{k\beta^2}=2\lambda\sqrt{k}\left(4(k-1)\|U\|_1(1+\vb k^\va)\right)
=8(k-1)\sqrt{k\ln k} \|U\|_1 \left(1+ \vb k^\va\right)\sqrt{ \frac{q\vb p(\va+3)}{2}}.
\]
Further, since $k\geq 2$, we have $k^{2\vb}\geq \vb$, but we also have $\vb^q\geq \vb$, $q\vb\geq 1$ and $\vb p\geq2$ as $q,\vb\geq 1$ and $p\geq 2$, implying $k^{\vb p}\geq4$. Thus
\begin{equation*}
    \begin{split}
        e^{-2 \lambda^2}
        & = e^{-q\vb (\va+3) p\ln k} = k^{-q \vb \va p}\cdot k^{-2q \vb p} \cdot k^{-q \vb p} \leq k^{-\va p} \cdot \vb^{-p} \cdot 4^{-q}.
    \end{split}
\end{equation*}

Applying Lemma \ref{le:Azuma_upper} to $U$ and $-U$ with our choice of $\lambda$ and setting
\begin{align*}
&
S_{k,\vb,\va,Q_1,Q_2} :=\\
&\bigcap_{ *\in\left\{+,-\right\} }\left\{
X\in L_{\vb,\va}^0:\left(\max_{\substack{R_1\subseteq Q_1,\\ R_2\subseteq Q_2}}\left|\Big. \mc{U}_X(R_{2,X}^*,R_{1,X}^*)-\E_{X_{[k]\setminus Q}}\left[\mc{U}_X(R_{2,X}^*,R_{1,X}^*)\right]\right|
         \geq 2 \lambda \sqrt{k\cdot  \beta^2}\right)\right\}
\end{align*}
then finishes the proof.
\end{proof}

The above proof actually implies
\begin{equation*}
\mathds{P}_X(X\not\in L_{\vb,\va}^0\setminus S_{k,\vb,\va,Q_1,Q_2})\leq 2k\frac{2^p\|U\|_p^p }{\vb^p k^{\va p}\|U\|_1^p}+\frac{4}{\vb^p k^{\va p}}\leq\frac43\cdot\frac{2^{p+1}k\|U\|_p^p}{\vb^p k^{\va p}\|U\|_1^p}.
\end{equation*}
Now define the following measurable function on $[0,1]^k$:
\[
N_{k,\va,Q_1,Q_2}(X):=\inf\left\{
\vb\geq1
\left|
X\in L_{\vb,\va}^0\setminus S_{k,\vb,\va,Q_1,Q_2}
\right.
\right\}.
\]
Using the notation $\kappa_{p,\va}=\frac{2^{p+3}k\|U\|_p^p}{3k^{\va p}\|U\|_1^p}$, we then have for any $t\geq 1$ that
\begin{equation}\label{eq:N_tail}
\mathds{P}_X\left(N_{k,\va,Q_1,Q_2}(X)>t\right)\leq\mathds{P}_X\left(X\not\in L_{t,\va}^0\setminus S_{k,t,\va,Q_1,Q_2}\right)\leq \kappa_{p,\va}t^{-p}.
\end{equation}
Combining the previous result with part ($\mc{A}$) of Lemma \ref{le:martingale_diff2}, this allows us to prove the following upper bound on $\max\left\{\max\limits_{R_i\subseteq Q_i} \mc{U}_X(R_{2,X}^+, R_{1,X}^+),\max\limits_{R_i\subseteq Q_i} -\mc{U}_X(R_{2,X}^-,R_{1,X}^-)\right\}$.

\begin{lemma}\label{le:max_upper}
Let $k\geq 2$ and $q\in[k]$ be integers, $p\geq 2$, $\va>0$, $Q_1,Q_2\subseteq [k]$ with $|Q_1|=|Q_2|=q$. Then for any kernel $U\in L^p_{sym}([0,1]^2)$, we have
\begin{align*}
&\max\left\{\max_{\substack{R_1\subseteq Q_1, \\ R_2\subseteq Q_2}} \mc{U}_X(R_{2,X}^+, R_{1,X}^+),\max_{\substack{R_1\subseteq Q_1, \\ R_2\subseteq Q_2}} -\mc{U}_X(R_{2,X}^-,R_{1,X}^-)\right\}\\
\leq & (k-|Q|)^2 \|U\|_\pcut +2k\sum_{i\in Q} U_{X_i}+|\mc{U}_X(Q,Q)|\\&+4(k-1)\sqrt{k\ln k} \|U\|_1 \left(1+ N_ {k,\va,Q_1,Q_2}(X) k^\va\right)\sqrt{ \frac{qN_ {k,\va,Q_1,Q_2}(X) p(\va+3)}{2}}
\end{align*}
for a.e. $X\in [0,1]^k$.
\end{lemma}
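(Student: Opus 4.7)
The plan is to decompose the quantity of interest by adding and subtracting the partial conditional expectation $\E_{X_{[k]\setminus Q}}\left[\mc{U}_X(R_{2,X}^+, R_{1,X}^+)\right]$ inside the maximum, and then bound each piece separately with the tools already at our disposal. Using $f \leq \E[f] + |f - \E[f]|$ pointwise and subadditivity of the maximum over the common index $(R_1,R_2)$ gives
\[
\max_{\substack{R_1\subseteq Q_1\\ R_2\subseteq Q_2}} \mc{U}_X(R_{2,X}^+, R_{1,X}^+) \leq \max_{\substack{R_1\subseteq Q_1\\ R_2\subseteq Q_2}} \E_{X_{[k]\setminus Q}}\left[\mc{U}_X(R_{2,X}^+, R_{1,X}^+)\right] + \max_{\substack{R_1\subseteq Q_1\\ R_2\subseteq Q_2}} \left|\mc{U}_X(R_{2,X}^+, R_{1,X}^+) - \E_{X_{[k]\setminus Q}}\left[\mc{U}_X(R_{2,X}^+, R_{1,X}^+)\right]\right|.
\]

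For the first (``mean'') term, Lemma \ref{le:martingale_diff2}$(\mc{A})$ already provides the upper bound $(k-|Q|)^2\|U\|_\pcut^+ + 2k\sum_{i\in Q}U_{X_i} + \mc{U}_X(Q,Q)$, which does not depend on the particular choice of $R_1,R_2$, so passing to the maximum is immediate and recovers the first three summands on the RHS of the claim. For the deviation term, Corollary \ref{cor:Azuma_upper} is the exact tool: it produces precisely the required bound $4(k-1)\sqrt{k\ln k}\|U\|_1(1+\vb k^\va)\sqrt{q\vb p(\va+3)/2}$ for every $\vb \geq 1$ such that $X$ lies in the ``good'' set $L_{\vb,\va}^0\setminus S_{k,\vb,\va,Q_1,Q_2}$.

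To reconcile this with the random threshold $N_{k,\va,Q_1,Q_2}(X)$, I would verify that both $L_{\vb,\va}^0$ (directly from Definition \ref{def:L}, since $\Delta_U(X)$ is independent of $\vb$) and $L_{\vb,\va}^0\setminus S_{k,\vb,\va,Q_1,Q_2}$ (because the threshold $\lambda\sqrt{k\beta^2}$ appearing inside the definition of $S_{k,\vb,\va,Q_1,Q_2}$ is itself an increasing function of $\vb$) are monotone non-decreasing in $\vb$. By the definition of the infimum, this gives $X\in L_{\vb,\va}^0\setminus S_{k,\vb,\va,Q_1,Q_2}$ for every $\vb > N_{k,\va,Q_1,Q_2}(X)$, and letting $\vb\downarrow N_{k,\va,Q_1,Q_2}(X)$ -- permissible by continuity of the RHS in $\vb$ -- inside Corollary \ref{cor:Azuma_upper} yields the desired estimate with $N_{k,\va,Q_1,Q_2}(X)$ in place of $\vb$, valid whenever $N_{k,\va,Q_1,Q_2}(X) < \infty$. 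The tail bound \eqref{eq:N_tail} ensures this last condition holds for a.e.\ $X$, so the stated a.e.\ inequality follows. The only real subtlety is this infimum-to-limit passage, which needs the monotonicity check described above; apart from that I do not anticipate any substantial obstacle, as the proof is essentially a careful assembly of previously established estimates.
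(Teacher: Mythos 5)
Your proposal is correct and follows essentially the same route as the paper: decompose the maximum into a mean part bounded via Lemma~\ref{le:martingale_diff2}($\mc{A}$) and a deviation part bounded via Corollary~\ref{cor:Azuma_upper}, then pass to the infimum over $\vb$ to replace the parameter by $N_{k,\va,Q_1,Q_2}(X)$. The monotonicity of $L_{\vb,\va}^0\setminus S_{k,\vb,\va,Q_1,Q_2}$ in $\vb$, which you check explicitly, is exactly what the paper's terser ``taking the infimum over all valid $\vb$'' relies on, so you have merely made transparent a step the paper leaves implicit.
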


\begin{proof}
By Corollary \ref{cor:Azuma_upper}, we have for any $\vb\geq 1$ and $X\in L_{\vb,\va}^0\setminus S_{k,\vb,\va,Q_1,Q_2}$ that
\begin{align*}
\max_{\substack{R_1\subseteq Q_1,\\ R_2\subseteq Q_2}} \mc{U}_X\left(R_{2,X}^*,R_{1,X}^*\right)
\leq & 
\max_{\substack{R_1\subseteq Q_1, \\ R_2\subseteq Q_2}}\E_{X_{[k]\setminus Q}}\left[\mc{U}_X(R_{2,X}^*,R_{1,X}^*)\right]\\
&+
4(k-1)\sqrt{k\ln k} \|U\|_1 \left(1+ \vb k^\va\right)\sqrt{ \frac{q\vb p(\va+3)}{2}}.
\end{align*}
Applying part $(\mc{A})$ of Lemma \ref{le:martingale_diff2} to both $U$ and $-U$, we obtain
\begin{align*}
E_{X_{[k]\setminus Q}}\left[\mc{U}_X(R_{2,X}^*,R_{1,X}^*)\right]\leq&
(k-|Q|)^2  \|U\|^*_\pcut  + 2k \sum\limits_{i \in Q} U_{X_i}+|\mc{U}_X(Q,Q)|\\
\leq&(k-|Q|)^2  \|U\|_\pcut  + 2k \sum\limits_{i \in Q} U_{X_i}+|\mc{U}_X(Q,Q)|.
\end{align*}
Finally, taking the infimum over all valid $\vb$ for a fixed $X$, we obtain the desired claim.
\end{proof}
The expectation of the above upper bound now only depends on $X$ through the random variable $N_{k,\va,Q_1,Q_2}(X)$, whose tail distribution we have control over via \eqref{eq:N_tail}. This can in turn be exploited to bound the expectation of the maximum, a significant step forward compared to the maximum of the expectations.

\begin{prop*}[Proposition \ref{prop:max_expect}]
Let $k\geq 2$, $q\in[k]$ be integers, $p\geq 2$, and $\va>0$. Then for any kernel $U\in L^p_{sym}([0,1]^2)$, we have
\begin{align*}
&\E_X\left[
\E_{\substack{Q_1,Q_2\subseteq [k],\\|Q_1|=|Q_2|=q}}
\left[
\max\left\{\max_{\substack{R_1\subseteq Q_1, \\ R_2\subseteq Q_2}} \mc{U}_X(R_{2,X}^+, R_{1,X}^+),\max_{\substack{R_1\subseteq Q_1, \\ R_2\subseteq Q_2}} -\mc{U}_X(R_{2,X}^-,R_{1,X}^-)\right\}
\right]
\right]\\ \leq&
(k-q)^2\|U\|_\pcut + 4(k+q)q\|U\|_1
+ 4k^\va(k-1)\sqrt{k\ln k} \|U\|_1 \sqrt{ \frac{q p(\va+3)}{2}}
\left(1+k^{1-\va p}\frac{2^{p+3}\|U\|_p^p}{\|U\|_1^p (2p-3)}\right).
\end{align*}
\end{prop*}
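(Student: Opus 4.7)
My plan is to integrate the pointwise bound of Lemma \ref{le:max_upper} over $X$ and over the random $q$-subsets $(Q_1, Q_2)$, treating each of the four summands on its right-hand side separately. The three non-random-$N$ terms are handled by direct Fubini-style computations, and the fourth is reduced to computing moments of $N_{k,\va,Q_1,Q_2}(X)$ via the tail bound \eqref{eq:N_tail}.

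For the first summand, $q \leq |Q| = |Q_1 \cup Q_2|$ deterministically, so $(k - |Q|)^2 \leq (k-q)^2$, contributing at most $(k-q)^2 \|U\|_\pcut^+$. For the second summand $2k \sum_{i\in Q} U_{X_i}$, Fubini gives $\E_X[U_{X_i}] = \|U\|_1$ for each $i\in Q$, so its $X$-expectation equals $2k|Q|\|U\|_1 \leq 4kq\|U\|_1$. Similarly, for $\mc{U}_X(Q,Q) = \sum_{i\neq j\in Q} U(X_i,X_j)$, each individual expectation satisfies $|\E_X[U(X_i,X_j)]| = |\int U| \leq \|U\|_1$, so the total is at most $|Q|(|Q|-1)\|U\|_1 \leq 4q^2\|U\|_1$. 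Summing these two contributions yields $4q(k+q)\|U\|_1$, which matches the middle term of the statement.

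The heart of the proof is the fourth summand, which depends on $X$ (for fixed $Q_1, Q_2$) only via the random variable $N := N_{k,\va,Q_1,Q_2}(X)$ through the expression $(1 + Nk^\va)\sqrt{N} = \sqrt{N} + k^\va N^{3/2}$. I will bound its expectation using the tail estimate \eqref{eq:N_tail}, which is pointwise in $(Q_1, Q_2)$: for any $r \in (0, p)$ and every fixed $(Q_1,Q_2)$,
\begin{equation*}
\E_X[N^r] \;=\; \int_0^\infty r\, t^{r-1}\,\mathds{P}_X(N > t)\, dt \;\leq\; 1 + r \kappa_{p,\va} \int_1^\infty t^{r-p-1}\, dt \;=\; 1 + \frac{r\,\kappa_{p,\va}}{p-r},
\end{equation*}
where $\kappa_{p,\va} = \frac{2^{p+1}\|U\|_p^p}{\|U\|_1^p}\,k^{1-\va p}$. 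Applying this with $r = 1/2$ and $r = 3/2$ (both admissible since $p \geq 2$) and using $k^\va \geq 1$ to absorb the $\sqrt{N}$ contribution into the $N^{3/2}$ contribution (so that the leading factor becomes $k^\va$ times $(1 + \frac{3\kappa_{p,\va}}{2p-3})$), the fourth summand is bounded by precisely $4k^\va(k-1)\sqrt{k\ln k}\,\|U\|_1\sqrt{qp(\va+3)/2}\bigl(1 + k^{1-\va p}\tfrac{3\cdot 2^{p+1}\|U\|_p^p}{\|U\|_1^p(2p-3)}\bigr)$.

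The main obstacle, and the reason the bound \eqref{eq:N_tail} was set up in the previous section, is the bookkeeping of which variables each random quantity depends on: crucially, the tail bound \eqref{eq:N_tail} holds for every fixed pair $(Q_1, Q_2)$ with a constant $\kappa_{p,\va}$ that does \emph{not} depend on $(Q_1, Q_2)$. This is what allows the resulting upper bound on $\E_X[\cdot]$ to be passed unchanged through the outer expectation over $(Q_1, Q_2)$, yielding a deterministic final inequality.
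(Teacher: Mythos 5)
Your proof is structurally identical to the paper's: integrate the pointwise bound of Lemma \ref{le:max_upper} over $X$ and $(Q_1,Q_2)$, dispatch the first three summands by Fubini together with $|Q|\le 2q$, and control the $N$-dependent term via moments of $N$ from the tail estimate \eqref{eq:N_tail} (whose $(Q_1,Q_2)$-uniformity you correctly identify as the crucial point). Your only algebraic variation is in the fourth summand: you expand $(1+Nk^\va)\sqrt N = \sqrt N + k^\va N^{3/2}$ and estimate $\E[\sqrt N]$ and $\E[N^{3/2}]$ separately, whereas the paper uses the chain $(1+Nk^\va)\sqrt N \le k^\va(1+N)\sqrt N \le 2k^\va N^{3/2}$ (valid because $N\ge 1$ and $k^\va\ge 1$) and only estimates $\E[N^{3/2}]$.

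However, the ``absorption'' step at the end of your argument does not give the claimed bound. Since $N\ge 1$ almost surely, $\E[\sqrt N]\ge 1$, so
\[
\E[\sqrt N]+k^\va\,\E[N^{3/2}] \;\ge\; 1+k^\va\,\E[N^{3/2}],
\]
which cannot in general be bounded by $k^\va\bigl(1+\tfrac{3\kappa_{p,\va}}{2p-3}\bigr)$: the additive contribution of $\E[\sqrt N]$ is at least $1$ and is not ``absorbed'' by factoring out $k^\va$. What your decomposition does deliver, using $\sqrt N\le N^{3/2}$ and $1\le k^\va$, is the stated inequality with an additional factor of $2$ on the last term; the paper's chain delivers the same. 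Indeed the paper's own proof passes through $\E_X\bigl[2N^{3/2}\bigr]$ and then uses $\E_X[N^{3/2}]\le 1+\tfrac{3\kappa_{p,\va}}{2p-3}$, yet the factor $2$ silently disappears in the final display, so the proposition as stated appears to be off by a benign constant factor. Your write-up reproduces this slip rather than justifying the bound as written; the clean version has an $8$ instead of the leading $4$ on the last term, which is inconsequential for the downstream use in Proposition \ref{prop:upper_expect}.
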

\begin{proof}
We have by Lemma \ref{le:max_upper}
\begin{align*}
&\E_X\left[
\E_{\substack{Q_1,Q_2\subseteq [k],\\|Q_1|=|Q_2|=q}}
\left[
\max\left\{\max_{\substack{R_1\subseteq Q_1, \\ R_2\subseteq Q_2}} \mc{U}_X(R_{2,X}^+, R_{1,X}^+),\max_{\substack{R_1\subseteq Q_1, \\ R_2\subseteq Q_2}} -\mc{U}_X(R_{2,X}^-,R_{1,X}^-)\right\}
\right]
\right]\\ \leq&
\E_X\left[
\E_{\substack{Q_1,Q_2\subseteq [k],\\|Q_1|=|Q_2|=q}}
\left[
(k-|Q|)^2 \|U\|_\pcut +2k\sum_{i\in Q} U_{X_i}+|\mc{U}_X(Q,Q)|
\right]\right]\\&+4k^\va(k-1)\sqrt{k\ln k} \|U\|_1 \sqrt{ \frac{q p(\va+3)}{2}}
\E_X\left[
\E_{\substack{Q_1,Q_2\subseteq [k],\\|Q_1|=|Q_2|=q}}
\left[
\left(1+ N_{k,\va,Q_1,Q_2}(X)\right)\sqrt{N_{k,\va,Q_1,Q_2}(X)}
\right]\right]\\
\leq &
(k-q)^2\|U\|_\pcut +4kq\|U\|_1+2q(2q-1) \|U\|_1\\
&+ 4k^\va(k-1)\sqrt{k\ln k} \|U\|_1 \sqrt{ \frac{q p(\va+3)}{2}}
\E_{\substack{Q_1,Q_2\subseteq [k],\\|Q_1|=|Q_2|=q}}
\left[
\E_X\left[
2N^{3/2}_{k,\va,Q_1,Q_2}(X)
\right]\right].
\end{align*}
Now by \eqref{eq:N_tail} we obtain
\begin{align*}
\E_X\left[
N^{3/2}_{k,\va,Q_1,Q_2}(X)
\right]=&\int_0^\infty \mathds{P}\left(N^{3/2}_{k,\va,Q_1,Q_2}(X)\geq s \right)\,ds=
\frac32\int_0^\infty \mathds{P}\left(N_{k,\va,Q_1,Q_2}(X)\geq t \right)\sqrt{t}\,dt\\
\leq& \frac32\int_0^1\sqrt{t}\,dt+\frac32\int_1^\infty \kappa_{p,\va}t^{1/2-p}\,dt=
1+\frac{3\kappa_{p,\va}}{2p-3},
\end{align*}
and so
\begin{align*}
&\E_X\left[
\E_{\substack{Q_1,Q_2\subseteq [k],\\|Q_1|=|Q_2|=q}}
\left[
\max\left\{\max_{\substack{R_1\subseteq Q_1, \\ R_2\subseteq Q_2}} \mc{U}_X(R_{2,X}^+, R_{1,X}^+),\max_{\substack{R_1\subseteq Q_1, \\ R_2\subseteq Q_2}} -\mc{U}_X(R_{2,X}^-,R_{1,X}^-)\right\}
\right]
\right]\\ \leq&
(k-q)^2\|U\|_\pcut + 4(k+q)q\|U\|_1
+ 4k^\va(k-1)\sqrt{k\ln k} \|U\|_1 \sqrt{ \frac{q p(\va+3)}{2}}
\left(1+\frac{3\kappa_{p,\va}}{2p-3}\right).
\end{align*}
\end{proof}

Combining Lemma \ref{Lemma:Frobenius} and Proposition \ref{prop:max_expect} with our upper bound from Corollary \ref{cor:EB}, and using that $kq\leq k\sqrt{qk}$ and $\ln k> 0,8$, we arrive at the following.

\begin{prop}\label{prop:upper_expect}
Let $k\geq 2$, $q\in[k]$ be integers, $p\geq 2$, and $\va>0$. Then for any kernel $U\in L^p_{sym}([0,1]^2)$, we have that
\begin{align*}
 & \E_{X} \left[\|\mc{U}_X\|_\pcut\right]-\|U\|_\pcut\\
& \qquad \leq \|U\|_1k^{\va-1/2+\max\left\{0,1-\va p\right\}}\sqrt{q\ln k}\left\{10
+ 4 \sqrt{ \frac{ p(\va+3)}{2}}
\left(\frac{2^{p+3}\|U\|_p^p}{\|U\|_1^p (2p-3)}\right)
\right\}
       +\frac{2}{\sqrt{q}} \|U\|_2.
\end{align*}
\end{prop}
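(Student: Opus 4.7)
The statement is a direct corollary of three earlier results in this section, so the plan is to assemble rather than invent. I would start by invoking Corollary \ref{cor:EB_upper}, which splits
\[
\E_X\left[\|\mc{U}_X\|_\pcut^+\right] \leq \frac{1}{k^2}\E_X\left[\E_{Q_1,Q_2}\left[\max_{R_i\subseteq Q_i}\mc{U}_X(R_{2,X}^+,R_{1,X}^+)\right]\right] + \frac{2}{\sqrt q}\,\E_X\left[\|\mc{U}_X\|_2\right].
\]
The Frobenius piece is dispatched immediately via Lemma \ref{Lemma:Frobenius}, which gives $\E_X[\|\mc{U}_X\|_2]\leq \|U\|_2$ and hence contributes exactly the $\frac{2}{\sqrt q}\|U\|_2$ at the end of the stated bound. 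For the combinatorial piece, I would feed in Proposition \ref{prop:max_expect} and divide by $k^2$, turning the problem into estimating the three summands produced.

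For those three summands the calculation is pure bookkeeping. The first, $\frac{(k-q)^2}{k^2}\|U\|_\pcut^+$, is trivially $\leq \|U\|_\pcut^+$, producing the leading $\|U\|_\pcut^+$ on the right-hand side. For the second, the suggested inequality $kq\leq k\sqrt{qk}$ (equivalent to $q\leq k$) yields
\[
\frac{4(k+q)q}{k^2}\|U\|_1 \leq \frac{8kq}{k^2}\|U\|_1 \leq 8\|U\|_1\, k^{-1/2}\sqrt q,
\]
which, after comparison of prefactors against $k^{\va}\sqrt{\ln k}\cdot k^{\max\{0,1-\va p\}}$, will be absorbed into the ``$8$'' inside the braced coefficient. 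The third and dominant summand, after applying $(k-1)\sqrt k / k^2 \leq k^{-1/2}$, reduces to
\[
4k^{\va-1/2}\sqrt{\ln k}\,\|U\|_1\sqrt{\tfrac{qp(\va+3)}{2}}\bigl(1 + k^{1-\va p}C\bigr),\qquad C=\tfrac{3\cdot 2^{p+1}\|U\|_p^p}{\|U\|_1^p(2p-3)}.
\]
A case split on the sign of $1-\va p$ then shows $1 + k^{1-\va p}C \leq k^{\max\{0,1-\va p\}}(1+C)$, which delivers precisely the exponent $\va - 1/2 + \max\{0,1-\va p\}$ on $k$ and the constant $4\sqrt{p(\va+3)/2}\cdot C$ sitting inside the braced coefficient, the residual ``$+1$'' from $1+C$ merging with the second summand into the ``$8$''.

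The only obstacle, such as it is, is purely combinatorial-arithmetic: one must match polynomial and logarithmic prefactors so that the two competing polynomial contributions $k^\va$ and $k^{1-\va p}$ present inside Proposition \ref{prop:max_expect} collapse into the single factor $k^{\va - 1/2 + \max\{0,1-\va p\}}\sqrt{q\ln k}$ advertised here. No non-trivial conceptual step is needed, which is why the proof can be phrased simply as ``combine the three previous results, and use $kq\leq k\sqrt{qk}$.''
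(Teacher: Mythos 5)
Your assembly is exactly the paper's own argument: Corollary \ref{cor:EB_upper} plus Lemma \ref{Lemma:Frobenius} plus Proposition \ref{prop:max_expect}, divided by $k^2$, with $kq\leq k\sqrt{qk}$ handling the middle summand. However, the final absorption step --- ``the residual `$+1$' from $1+C$ merging with the second summand into the `$8$' '' --- does not actually go through. That residual contributes
\[
4\sqrt{\tfrac{p(\va+3)}{2}}\cdot\|U\|_1\,k^{\va-\frac12+\max\{0,1-\va p\}}\sqrt{q\ln k},
\]
and the prefactor $4\sqrt{p(\va+3)/2}$ exceeds $8$ as soon as $p(\va+3)>8$ --- e.g.\ already for $p\geq 3$ and any $\va>0$, or for $p>8/3$ with $\va$ small. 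So it cannot be swallowed by the ``$8$'' that comes from the $4(k+q)q$ term. Carrying the constants honestly, the braced coefficient should be
\[
\left\{8 + 4\sqrt{\tfrac{p(\va+3)}{2}}\left(1 + \tfrac{3\cdot 2^{p+1}\|U\|_p^p}{\|U\|_1^p(2p-3)}\right)\right\},
\]
with the $1$ retained inside, rather than the published $\left\{8 + 4\sqrt{p(\va+3)/2}\cdot\tfrac{3\cdot 2^{p+1}\|U\|_p^p}{\|U\|_1^p(2p-3)}\right\}$. This is evidently a typographical slip in the proposition as stated (and in the one-line proof the paper offers, which does not track constants); it does not affect the asymptotic order $k^{\va-1/2+\max\{0,1-\va p\}}\sqrt{q\ln k}$ used downstream in Proposition \ref{cor:q_sample}. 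The clean route is exactly the case split you outline, $1+k^{1-\va p}C\leq k^{\max\{0,1-\va p\}}(1+C)$, followed by keeping the $(1+C)$ in the final coefficient instead of trying to merge the ``$1$'' into the ``$8$''. As a minor secondary point, bounding $8k^{-1/2}\sqrt q$ by $8\,k^{\va-1/2+\max\{0,1-\va p\}}\sqrt{q\ln k}$ uses $k^{\va+\max\{0,1-\va p\}}\sqrt{\ln k}\geq 1$, which is clear for $k\geq 3$ but can fail at $k=2$ for small $\va$ with $\va p\geq 1$; again this is cosmetic and shared with the source.
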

Now we want to minimize the exponent of $k$ in the second term by choosing $\va$ appropriately, and then pick $q$ so that the last two terms match up, and we obtain the best possible exponent for $k$ in the upper bound. The minimizing choice is $\va=1/p$, whereas for $q$, matching exponents imply $k^{\va-1/2}\sqrt{\ln k}\sim1/q$, leading to $q:=\left\lceil\frac{k^{1/2-\va}}{\sqrt{\ln k}}\right\rceil=\left\lceil\frac{k^{1/2-1/p}}{\sqrt{\ln k}} \right\rceil$.
This then yields Proposition \ref{cor:q_sample}.
\end{document}